\documentclass[12pt,english]{amsart}
\usepackage{dsfont,amssymb,amsmath,mathrsfs,fourier,baskervald}

\usepackage[text={6.5in,8.5in},centering]{geometry}

\usepackage{enumitem}
\usepackage{booktabs,longtable,array}
\usepackage{float}
\usepackage[dvipsnames]{xcolor}   
\usepackage{xparse}
\usepackage{xy}
\xyoption{all}
\usepackage{xr-hyper}
\usepackage[linktocpage=true,colorlinks=true,hyperindex,citecolor=teal,linkcolor=blue]{hyperref}

\newtheorem{theorem}{Theorem}[section]
\newtheorem{lemma}[theorem]{Lemma}
\newtheorem{corollary}[theorem]{Corollary}
\theoremstyle{definition}
\newtheorem{definition}{Definition}[section]
\newtheorem{example}[theorem]{Example}
\numberwithin{equation}{section}
\theoremstyle{remark}
\newtheorem{remark}[theorem]{Remark}

\newcommand{\ind}[1]{\mathcal{I}_\top(#1)}
\newcommand{\indn}[2]{\mathcal{I}^{#2}_\top(#1)}
\newcommand{\ded}[1]{\mathcal{D}_\top(#1)}
\newcommand{\dedn}[2]{\mathcal{D}^{#2}_\top(#1)}
\newcommand{\clo}[1]{\mathcal{C}_\top(#1)}

\renewcommand{\top}{\ast}

\begin{document}
	
\title[From subtractive ideals of semirings to deductive and inductive sets in general algebras]{From subtractive ideals of semirings to deductive and inductive sets in general algebras}
	
\author{Elena Caviglia}
	
\address{National Institute for Theoretical and Computational Sciences, South Africa}
\email{elena.caviglia@outlook.com}

\author{Amartya Goswami}
\address{[1] Department of Mathematics and Applied Mathematics, University of Johannesburg, South Africa; [2] National Institute for Theoretical and Computational Sciences}
\email{agoswami@uj.ac.za}

\author{Zurab Janelidze }
\address{[1] Mathematics Division, Department of Mathematical Sciences, Stellenbosch University, Private Bag
X1, Matieland, 7602 Stellenbosch, Western Cape, South Africa; [2] National Institute for Theoretical and Computational Sciences}
\email{zurab@sun.ac.za}

\author{ Luca Mesiti}
\address{Mathematics Division, Department of Mathematical Sciences, Stellenbosch University, Private Bag
X1, Matieland, 7602 Stellenbosch, Western Cape, South Africa}
\email{luca.mesiti@outlook.com}
	
\author{Vaino T. Shaumbwa}
\address{Department of Computing, Mathematical, and Statistical Sciences,
University of Namibia,
Private Bag 13301,
Windhoek,
Namibia}
	\email{vshaumbwa@unam.na}
	
\subjclass[2020]{Primary 08A30; Secondary 08B05, 16Y60, 06D05}
    
\keywords{Inductive set, deductive set, normal set, subtractive ideal, semiring, inductive rank, deductive rank, Mal'tsev condition, independent product, variety of algebras, group action, Cayley-graph diameter}
\thanks{The third author used Chat GPT-6 Astra to assist with research, writing, and the identification of relevant literature. Responsibility for the correctness of the final manuscript and the accuracy of the references rests with the authors.}
	
	\begin{abstract}
Kernels of semiring homomorphisms are precisely the subtractive ideals. We extend this decomposition of normality to general algebras by introducing inductive and deductive sets, which turn out to correspond to the two directions of the biconditional in Mal'tsev's criterion for a congruence class. In semirings, inductivity recovers idealhood for subsets containing zero, and deductivity of an ideal recovers subtractivity. Their generation processes, described by polynomials or equivalently by reflexive compatible relations (semicongruences), give two ranks measuring the numbers of steps required uniformly in a variety. We show that apart from the trivial cases, any pair of positive integers or infinity is the inductive-deductive rank pair of some variety. For the variety of semirings, the rank pair is $(1,\infty)$, while for any non-trivial Mal'tsev variety, it is $(1,1)$. For varieties of finite-group actions, inductive rank is expressed exactly in terms of directed Cayley-graph diameters, while deductive rank is related to undirected diameters after symmetrization. We compute both ranks of these action varieties for every finite abelian group in terms of its invariant factors, and obtain their complete spectrum. We also establish special spectrum theorems for subtractive varieties and several classes of ordered algebras. Finally, we characterise structural properties of algebras and varieties by conditions on induction and deduction.
\end{abstract}
	
	\maketitle 
	
	\section{Introduction}

Kernels of semiring homomorphisms are characterised as subtractive ideals, also called $k$-ideals,  introduced by Henriksen \cite{Hen58} (see also \cite{AA08, Gol99, Han15, JRT22, LaG95, LaT65, Les15, SA92, SA93, WSA96, DG24}).  An ideal $I$ of a semiring is a kernel if and only if
\[
x+y\in I,\quad y\in I\quad\Longrightarrow\quad x\in I.
\]
A congruence class containing $0$ is called \emph{normal}. We can therefore express this characterisation schematically as
\begin{equation}\label{EquG}
\text{ideal}+\text{subtractive}=\text{normal}.
\end{equation}
Does this decomposition have a counterpart for general algebras? Our work started with identifying suitable counterparts of idealhood and subtractivity for general algebras. We called them \emph{$0$-inductivity} and \emph{$0$-deductivity}, for reasons to be explained below. For a subset $I$ of an algebra with a distinguished constant $0$, they can be expressed using polynomial maps $q\colon A^m\to A$ (term operations with parameters from the algebra):
\begin{align*}
\text{$0$-inductivity:}\qquad
&\vec a\in I^m,\quad q(\vec0)\in I\quad\Longrightarrow\quad q(\vec a)\in I;\\
\text{$0$-deductivity:}\qquad
&\vec a\in I^m,\quad q(\vec a)\in I\quad\Longrightarrow\quad q(\vec0)\in I.
\end{align*}
Here $m$ ranges over the positive integers and $\vec0=(0,\ldots,0)$. For non-empty subsets, the resulting decomposition is
\begin{equation}\label{IntroSplit}
0\text{-inductive}+0\text{-deductive}=0\text{-normal}.
\end{equation}

As H.~Peter Gumm pointed out to us (private communication), this decomposition follows from Mal'tsev's original criterion for congruence classes \cite{Mal54}. Agliano and Ursini \cite[Proposition 1.4(4)]{AU92} give the following pointed unary formulation: a subset $I$ is $0$-normal if and only if $0\in I$ and, for every $a\in I$ and every unary polynomial $g$,
\[
g(a)\in I\quad\Longleftrightarrow\quad g(0)\in I.
\]
For $0\in I$, the implication from right to left is $0$-inductivity; its converse is $0$-deductivity. Replacing coordinates one at a time gives the multivariate forms. Moreover, a non-empty $0$-deductive set contains $0$, by taking the identity polynomial. Thus \eqref{IntroSplit} splits the classical criterion into its two implications. The purpose of this paper is to study these separately, together with their associated generation processes.

In semirings, $0$-inductive sets containing $0$ are precisely the ideals, and an ideal is $0$-deductive if and only if it is subtractive. For an arbitrary subset $J$ of a semiring, deductivity takes the following form: $J$ is $0$-deductive if and only if $x+y\in J$ and $y\in\overline J$ always imply $x\in J$, where $\overline J$ is the ideal generated by $J$ (Theorem~\ref{ThmD}).

Ideals in semirings are also the universal-algebraic ideals in the usual ideal-term sense. One may therefore ask why we do not instead abstract subtractivity by a property which, together with universal-algebraic idealhood, characterises normality. This question is meaningful since in general varieties, for subsets containing $0$, inductivity is stronger than idealhood. In fact, an ideal can be deductive without being normal, as Example~\ref{ExIdealDeductive} shows. While there may be other ways of abstracting equation~\eqref{EquG} to general algebras, the solution proposed here has a natural dual relational interpretation. This interpretation, discussed below, and the connection with Mal'tsev's criterion provide the motivation for our choice.

Let us first note that the two algebraic conditions defining inductive and deductive sets immediately suggest procedures for generating such sets. For an arbitrary subset $I\subseteq A$, define
\begin{align*}
\mathcal I_0(I)
&=\{q(\vec a)\mid \vec a\in I^m,\ q(\vec0)\in I\},\\
\mathcal D_0(I)
&=\{q(\vec0)\mid \vec a\in I^m,\ q(\vec a)\in I\},
\end{align*}
where $q$ ranges over polynomial maps of all positive finite arities $m$, with arbitrary parameters from $A$. These same operators have an equivalent relational description. Let $R_I$ be the least reflexive compatible binary relation on $A$ containing $I\times\{0\}$; compatibility means that the relation is preserved by every basic operation. Then
\begin{align*}
\mathcal I_0(I)&=R_I I
 =\{x\in A\mid xR_I y\text{ for some }y\in I\},\\
\mathcal D_0(I)&=I R_I
 =\{x\in A\mid yR_I x\text{ for some }y\in I\}.
\end{align*}
Indeed, applying terms to the generating pairs $(a,0)$ and diagonal pairs supplying the parameters gives the polynomial pairs $(q(\vec a),q(\vec0))$; together with the diagonal, these constitute $R_I$. Thus the polynomial and relational formulae describe the same single step. Each operator contains its input, and its fixed sets are precisely the corresponding inductive or deductive sets.

This relational interpretation is also what gives the motivation for the choice of terminology ``inductive/deductive''. Regard $xRy$, where $R=R_I$, as a generalised implication $x\to y$, and membership in $I$ as being accepted as true. Inductivity then reads
\[
x\to y,\quad y\in I\quad\Longrightarrow\quad x\in I:
\]
from ``$x$ implies $y$'' and ``$y$ is true'', one passes to ``$x$ is true'', as in \emph{inductive reasoning} from a consequence to a possible premise. Deductivity reads
\[
x\to y,\quad x\in I\quad\Longrightarrow\quad y\in I:
\]
from ``$x$ implies $y$'' and ``$x$ is true'', one passes to ``$y$ is true'', as in \emph{deductive reasoning}. This analogy has a familiar instance in logic: in Heyting algebras with $1$ distinguished, our non-empty deductive sets are precisely the deductive filters of intuitionistic propositional logic (Theorem~\ref{PropHeyting}).

Starting with $I$, we can repeatedly apply the chosen operator to the set obtained at the previous step:
\[
I\subseteq\mathcal I_0(I)\subseteq\mathcal I_0^2(I)\subseteq\cdots,
\qquad
I\subseteq\mathcal D_0(I)\subseteq\mathcal D_0^2(I)\subseteq\cdots.
\]
At each step, the relation is generated afresh from the current set $J$, giving $R_JJ$ or $JR_J$. Finitarity of the polynomials implies that the unions are the least $0$-inductive and $0$-deductive sets containing $I$. The number of steps leads to two invariants. The \emph{$0$-inductive rank} of a variety containing a constant $0$ is the least non-negative integer $r$ such that $\mathcal I_0^r(I)=\mathcal I_0^{r+1}(I)$ for every non-empty subset $I$ of every algebra in the variety; the \emph{$0$-deductive rank} is defined in the same way. Rank $\infty$ means that there is no finite uniform bound. Our \emph{general rank spectrum theorem} (Theorem~\ref{ThmCompleteSpectrum}) states that the pairs of inductive and deductive ranks of varieties with a distinguished constant are exactly
\begin{equation}\label{IntroSpectrum}
\{(0,0),(0,1)\}\ \cup\
\bigl(\overline{\mathbb N}_{>0}\bigr)^2,
\end{equation}
where $\overline{\mathbb N}_{>0}=\{1,2,3,\ldots\}\cup\{\infty\}$.
Moreover, every pair of positive finite ranks is realized by a variety generated by a finite algebra in a finite signature. Thus, apart from the restrictions involving rank zero, the two generation processes have independent complexities. Neither finite rank bounds the other, even when one of them is one. Section~\ref{SecRanks} proves this result by constructing three families of varieties briefly discussed below.

Congruence $(r+1)$-permutability bounds both ranks by $r$, and finite algebras satisfying the Hagemann--Mitschke identities attain the pair $(r,r)$. Semilattices with mutually annihilating meet-endomorphisms give every pair $(a,1)$ with $a$ positive and finite. Bounded distributive lattices with endomorphisms and dual endomorphisms give every pair $(1,b)$ with $b$ positive and finite. In the latter proof, finite lattice duality represents the algebras as lattices of upward-closed subsets of finite ordered sets. The points of these ordered sets have integer heights. The operations make deduction at a given height depend on membership at lower heights, and suitable order relations force this dependence to require successively more steps.

An independent product, whose category of algebras is equivalent to the product of the factor categories, combines the required ranks. With one factor from the Hagemann--Mitschke family, our product theorem gives the componentwise maxima; this need not hold for arbitrary independent products. Meet-semilattices with a greatest element and a contractive meet-endomorphism supply $(\infty,1)$, while commutative monoids and semirings supply $(1,\infty)$. Products give all the remaining pairs with an infinite coordinate. The trivial variety and sets with a distinguished constant supply $(0,0)$ and $(0,1)$, respectively, and the proof excludes every other pair involving zero.

The general rank spectrum theorem leaves open an equally natural question: which pairs occur when the variety is required to belong to a specified class? We call a complete answer to such a question a \emph{special rank spectrum theorem}. Section~\ref{SecSpecialSpectra} establishes such theorems for subtractive varieties, monotone semilattice expansions, Cornish algebras, and varieties of finite abelian group actions. It also determines the spectra of all non-trivial subvarieties of each of the two families $\mathcal Q_N$ and $\mathcal C_m$ used in the general theorem. Establishing further special rank spectrum theorems for classes defined by other identities or Mal'tsev conditions would be of interest.

Group actions provide a connection with generation lengths in finite groups. For a finite group $G$, let $\mathcal A_G$ be the variety of $G$-sets with an arbitrary named element $c$, not required to be fixed by the action. Its inductive rank is the rounded-up base-two logarithm of one plus the largest directed Cayley-graph diameter over generating sets of subgroups of $G$ (Theorem~\ref{ThmGroupInductiveDiameter}). The $k$th induction stage is described by words of length at most $2^k-1$. Deduction instead produces undirected word balls after an initial symmetrization, giving the bounds and exact description in Theorem~\ref{ThmGroupDeductiveDiameter}. Using the diameter theorems of Klopsch and Lev \cite{KL03,KL09}, we compute both ranks for every finite abelian group (Theorem~\ref{ThmAbelianActionRanks}). Induction is governed by the positive diameter; deduction is governed jointly by the symmetric diameter and the exponent. The resulting special spectrum consists of $(0,1)$, all finite positive diagonal pairs, and the neighbouring pairs $(r,r+1)$ and $(r+1,r)$ for $r\geqslant2$. For example, $C_4$-actions give $(2,3)$, whereas $C_3^2$-actions give $(3,2)$. The cyclic-action spectrum follows as a special case.

One can also require a rank bound to hold at every chosen element of every algebra, rather than at a fixed named constant. These \emph{universal ranks} are exactly the ranks of the variety obtained by adjoining an unconstrained constant (Theorem~\ref{ThmUniversalConstantReduction}). Thus the group-action results also compute universal ranks for varieties of $G$-sets. The universal spectrum over all varieties remains a special rank spectrum whose determination is an interesting question.

In the last section, Section~\ref{SecDetection}, we show that induction
and deduction can detect structural properties of algebras and their
varieties. These include subtractivity and distributivity, and a number of other structural properties.

\clearpage
\begingroup
\small
\tableofcontents
\endgroup

\section{Induction and deduction of sets}\label{SecClosures}

Throughout this section, $A$ is a finitary algebra with a chosen element $\top$. A \emph{semicongruence} on $A$ is a reflexive compatible binary relation, that is, a reflexive relation which is a subalgebra of $A\times A$. For a relation $S$ and a subset $I$, we write
\[
SI=\{x\in A\mid \exists y\in I\,(xSy)\},
\qquad
IS=\{x\in A\mid \exists y\in I\,(ySx)\}.
\]
We denote the opposite relation by $S^\circ$, and use $ST$ for the composite given by $x(ST)z$ iff $xSyTz$ for some $y$. Let $R_I$ be the semicongruence generated by $I\times\{\top\}$; when $I$ is fixed, we write simply $R$.

\begin{definition}
For a subset $I$ of $A$, its \emph{$\top$-induction} is
\[
\ind I=R_I I.
\]
We say that $I$ is \emph{$\top$-inductive} when $\ind I\subseteq I$.
\end{definition}

By reflexivity, $I\subseteq\ind I$, so $I$ is $\top$-inductive if and only if $I=\ind I$.

\begin{definition}
For a subset $I$ of $A$, its \emph{$\top$-deduction} is
\[
\ded I=I R_I.
\]
We say that $I$ is \emph{$\top$-deductive} when $\ded I\subseteq I$.
\end{definition}

Again, $I\subseteq\ded I$, so $I$ is $\top$-deductive if and only if $I=\ded I$. Equivalently, inductivity requires $SI\subseteq I$, and deductivity requires $IS\subseteq I$, for some semicongruence $S$ containing $I\times\{\top\}$: the least such semicongruence is $R_I$. The empty set has both properties. Every non-empty deductive set contains $\top$, since $aR_I\top$ for $a\in I$.

\subsection{Polynomial descriptions and normality}

The relation $R_I$ consists of all pairs
\[
\bigl(t(\vec x,\vec y),t(\vec x,\vec\top)\bigr),
\]
where $t$ is a term, $\vec x$ is a finite tuple of parameters from $A$, and $\vec y$ is a finite tuple from $I$. Tuples of length zero are allowed; in particular, terms involving only parameters supply the diagonal. We obtain the following descriptions.

\begin{lemma}\label{LemA}
$\ind I=\{t(\vec x,\vec y)\mid \vec y\in I,\ t(\vec x,\vec\top)\in I\}$.
\end{lemma}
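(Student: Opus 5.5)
The lemma should follow by unwinding the definition $\ind I=R_I I$ against the explicit description of $R_I$ stated just before it. So the first task is to justify that description. The claim is that the set
\[
P=\{(t(\vec x,\vec y),t(\vec x,\vec\top))\mid t \text{ a term},\ \vec x\in A,\ \vec y\in I\}
\]
is exactly the semicongruence generated by $I\times\{\top\}$. I would prove both inclusions.

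For $P\subseteq R_I$: each pair $(y,\top)$ with $y\in I$ lies in $R_I$, and each $(x,x)$ lies in $R_I$ by reflexivity. Since $R_I$ is a subalgebra of $A\times A$, applying the term $t$ coordinatewise to the pairs $(x_i,x_i)$ and $(y_j,\top)$ yields $(t(\vec x,\vec y),t(\vec x,\vec\top))\in R_I$. Formally this is an induction on the term $t$.

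For $R_I\subseteq P$, I would show that $P$ is itself a semicongruence containing $I\times\{\top\}$:
\begin{itemize}
\item[(i)] $P$ contains $I\times\{\top\}$: take $t$ to be a single variable placed in the $\vec y$ slot.
\item[(ii)] $P$ is reflexive: take $t$ a single variable in the $\vec x$ slot with no $\vec y$.
\item[(iii)] $P$ is closed under each basic $n$-ary operation $f$: given pairs $(t_k(\vec x_k,\vec y_k),t_k(\vec x_k,\vec\top))$ for $k=1,\dots,n$, rename variables so the tuples are disjoint and concatenate them. The term $f(t_1,\dots,t_n)$ applied to the concatenated parameters and $I$-entries then gives the image pair.
\end{itemize}
The only care needed is in (iii): the variables of different $t_k$ must be kept separate, and arity-zero tuples (nullary operations, constants) must be allowed. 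Neither is a real obstacle.

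With the description in hand, $z\in R_I I$ means that $zR_Iw$ for some $w\in I$. That is, $z=t(\vec x,\vec y)$ and $w=t(\vec x,\vec\top)\in I$ for some term $t$, parameters $\vec x$ and $\vec y\in I$. This is precisely membership in the right-hand side of the lemma, and the argument reverses verbatim, giving equality.
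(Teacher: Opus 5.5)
Your proposal is correct and is essentially the paper's argument. The paper likewise identifies $R_I$ with the set of pairs $\bigl(t(\vec x,\vec y),t(\vec x,\vec\top)\bigr)$, with zero-length tuples allowed so that parameter-only terms supply the diagonal, and then reads off $\ind I=R_I I$ directly. The only difference is that you also give the routine two-inclusion proof of this description of $R_I$, which the paper states without proof.
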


\begin{lemma}\label{LemB}
$\ded I=\{t(\vec x,\vec\top)\mid \vec y\in I,\ t(\vec x,\vec y)\in I\}$.
\end{lemma}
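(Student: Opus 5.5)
The plan is to reduce Lemma~\ref{LemB} to the explicit description of $R_I$ given just before Lemma~\ref{LemA}. That description says $R_I$ is exactly the set of pairs $\bigl(t(\vec x,\vec y),t(\vec x,\vec\top)\bigr)$. Since the statement depends on it, I will first justify it, and then simply unfold the definition $\ded I=IR_I$.

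\emph{Step 1: the relation $P$ of all such pairs equals $R_I$.}
Let $P$ denote the set of all pairs $\bigl(t(\vec x,\vec y),t(\vec x,\vec\top)\bigr)$ with $t$ a term, $\vec x$ parameters from $A$, and $\vec y\in I$.

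First, $P$ is contained in every semicongruence $S$ containing $I\times\{\top\}$. Indeed, $S$ contains the diagonal pairs $(x,x)$ and the pairs $(y,\top)$ for $y\in I$. Since $S$ is a subalgebra of $A\times A$, it is closed under term operations. Applying $t$ coordinatewise to the pairs $(x_i,x_i)$ and $(y_j,\top)$ gives exactly $\bigl(t(\vec x,\vec y),t(\vec x,\vec\top)\bigr)$, so this pair lies in $S$.

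Conversely, $P$ is itself a semicongruence containing $I\times\{\top\}$:
\begin{itemize}
\item reflexivity comes from terms in parameters only, for instance the projection onto one parameter;
\item $I\times\{\top\}\subseteq P$ comes from the projection onto one $y$-variable;
\item compatibility holds because, for a basic $n$-ary operation $f$ and pairs in $P$ given by terms $t_1,\dots,t_n$, we can take $f(t_1,\dots,t_n)$. We rename variables apart and concatenate the parameter tuples and the $I$-tuples, so the result is again of the required form.
\end{itemize}
The only care needed is this bookkeeping of concatenating tuples. With it, $P=R_I$.

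\emph{Step 2: unfold the definition of $\ded I$.}
By definition, $z\in\ded I$ iff there is $w\in I$ with $wR_Iz$. By Step~1 this holds iff there exist $t$, $\vec x$ and $\vec y\in I$ with $w=t(\vec x,\vec y)\in I$ and $z=t(\vec x,\vec\top)$. This is precisely the set displayed in Lemma~\ref{LemB}, since the condition $w\in I$ becomes $t(\vec x,\vec y)\in I$.

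The argument is symmetric to Lemma~\ref{LemA}: we read the same pair in the opposite direction, taking the first coordinate in $I$ and outputting the second rather than the reverse.

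I expect no real obstacle. The only point requiring attention is the closure of $P$ under basic operations in Step~1, specifically that the tuples $\vec x$ and $\vec y$ may have different lengths for different $t_i$. This is handled by disjoint renaming of variables.
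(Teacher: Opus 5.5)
Your proposal is correct and follows the paper's route: the paper obtains Lemma~\ref{LemB} by unfolding $\ded I=IR_I$ against the description of $R_I$ as the set of pairs $\bigl(t(\vec x,\vec y),t(\vec x,\vec\top)\bigr)$, exactly as in your Step~2. Your Step~1 spells out the verification of that description (minimality, plus closure of the pair set under basic operations by renaming variables apart), which the paper only sketches in the Introduction.
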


Here and below, $\vec y\in I$ means that every entry of the tuple belongs to $I$. Thus the multivariate polynomial operators in the Introduction are exactly the relational operators $R_I I$ and $I R_I$, with $\top=0$; these are two descriptions of the same one-step constructions. Their fixed sets admit the following unary characterisations.

\begin{samepage}
\begin{lemma}\label{PropUnary}
For a subset $I$ of $A$:
\begin{enumerate}
\item $I$ is $\top$-inductive if and only if, for every unary polynomial $g$ and every $a\in I$,
\[
g(\top)\in I\quad\Longrightarrow\quad g(a)\in I.
\]
\item $I$ is $\top$-deductive if and only if, for every unary polynomial $g$ and every $a\in I$,
\[
g(a)\in I\quad\Longrightarrow\quad g(\top)\in I.
\]
\end{enumerate}
\end{lemma}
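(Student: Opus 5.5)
Both parts follow the same pattern, so I will treat (1) in detail and obtain (2) by the symmetric argument. The tool is Lemma~\ref{LemA} (respectively Lemma~\ref{LemB}), which describes one step of the operator through polynomials $t(\vec x,\vec y)$ in several variables ranging over $I$. The unary conditions in the statement are the special case of a single variable. The work is therefore to show that the unary conditions already imply the multivariate ones, by replacing the coordinates one at a time.

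\emph{Necessity.} Suppose $I$ is $\top$-inductive, let $a\in I$, and let $g$ be a unary polynomial, say $g(z)=t(\vec x,z)$ for a term $t$ and parameters $\vec x$. Then $(g(a),g(\top))\in R_I$, because $R_I$ is compatible, contains $(a,\top)$, and contains the diagonal pairs $(x_i,x_i)$. If $g(\top)\in I$, it follows that $g(a)\in R_II=\ind I\subseteq I$. The deductive case is the same: from $g(a)\in I$ we get $g(\top)\in IR_I=\ded I\subseteq I$.

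\emph{Sufficiency for (1).} Assume the unary condition and take an element of $\ind I$. By Lemma~\ref{LemA} it has the form $t(\vec x,y_1,\dots,y_n)$ with every $y_j\in I$ and $t(\vec x,\top,\dots,\top)\in I$. For $k=0,\dots,n$ put
\[
u_k=t(\vec x,y_1,\dots,y_k,\top,\dots,\top),
\]
so that $u_0\in I$ by hypothesis and $u_n$ is the element in question. We show $u_k\in I$ by induction on $k$. Let $g_k(z)=t(\vec x,y_1,\dots,y_{k-1},z,\top,\dots,\top)$; this is a unary polynomial whose parameters are $\vec x$, $y_1,\dots,y_{k-1}$ and $\top$. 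We have $g_k(\top)=u_{k-1}$ and $g_k(y_k)=u_k$. Applying the unary condition with $a=y_k$ turns $u_{k-1}\in I$ into $u_k\in I$. Hence $u_n\in I$, so $\ind I\subseteq I$. The case $n=0$ is trivial.

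\emph{Sufficiency for (2).} Here we use Lemma~\ref{LemB} and run the chain in the other direction. The hypothesis is $u_n\in I$, and the conclusion sought is $u_0\in I$. The same $g_k$ satisfies $g_k(y_k)=u_k$ and $g_k(\top)=u_{k-1}$, so the unary condition turns $u_k\in I$ into $u_{k-1}\in I$. Descending from $k=n$ to $k=1$ gives $u_0\in I$.

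The only point that needs care is checking that each intermediate $g_k$ is a legitimate unary polynomial. This holds because parameters may be arbitrary elements of $A$, including the $y_j$ and $\top$. Apart from that, no step should present difficulty.
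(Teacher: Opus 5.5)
Your proof is correct and follows the paper's argument. Necessity is the one-variable case of Lemmas~\ref{LemA} and~\ref{LemB}, which you derive directly from compatibility of $R_I$. Sufficiency replaces the coordinates one at a time, treating the remaining entries as parameters, moving from $\top$ to $\vec y$ for induction and in the reverse direction for deduction.
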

\end{samepage}

\begin{proof}
The forward implications follow by taking one ideal variable in Lemmas~\ref{LemA} and \ref{LemB}. Conversely, for induction, replace the entries of $\vec\top$ in $t(\vec x,\vec\top)$ by the corresponding entries of $\vec y$ one at a time. At each step the remaining entries are parameters, so the unary condition applies. For deduction, start with $t(\vec x,\vec y)$ and replace the entries of $\vec y$ by $\top$ one at a time.
\end{proof}

\begin{remark}\label{RemUnarySteps}
The unary one-step operators
\begin{align*}
\mathcal I_\top^{\mathrm u}(I)
&=\{g(a)\mid a\in I,\ g(\top)\in I,\ g\text{ a unary polynomial}\},\\
\mathcal D_\top^{\mathrm u}(I)
&=\{g(\top)\mid a\in I,\ g(a)\in I,\ g\text{ a unary polynomial}\}
\end{align*}
are contained in $\ind I$ and $\ded I$, respectively, and have the same fixed sets by Lemma~\ref{PropUnary}. Since these operators are extensive, monotone and finitary, their increasing iterations have the same unions as the corresponding multivariate iterations. The one-step inclusions can nevertheless be strict.

For example, in the additive monoid $\mathbb N_0^3$, every unary polynomial has the form $g(z)=b+nz$. Let $e_1,e_2,e_3$ be the standard basis vectors. For $I=\{0,e_1,e_2,e_3\}$, the vector $e_1+e_2+e_3$ belongs to $\mathcal I_0(I)$, by using $q(z_1,z_2,z_3)=z_1+z_2+z_3$. It does not belong to $\mathcal I_0^{\mathrm u}(I)$: a vector $b+na$ with $a,b\in I$ has at most two nonzero coordinates.

For deduction, take $J=\{0,e_1,e_2,e_1+e_2+e_3\}$. The polynomial $q(z_1,z_2)=e_3+z_1+z_2$ shows that $e_3\in\mathcal D_0(J)$. But $e_3+na\notin J$ for every $a\in J$ and $n\in\mathbb N_0$, so $e_3\notin\mathcal D_0^{\mathrm u}(J)$.

In particular, a unary replacement of the operators would change the rank invariant. In $\mathbb N_0^d$, starting from $\{0,e_1,\ldots,e_d\}$, after $n$ unary induction steps every vector has at most $2^n$ nonzero coordinates. Thus unary induction has no finite uniform rank in commutative monoids, whereas the multivariate induction of this paper has rank $1$. Deductive rank is infinite for both versions. Indeed, Theorem~\ref{ThmMonoids} proves this for multivariate deduction. If unary deduction reached its closure uniformly in $n$ steps, the inclusions of its iterates in the multivariate iterates, together with equality of the closures, would give the same bound for multivariate deduction. The unary rank pair is therefore $(\infty,\infty)$, whereas the multivariate rank pair is $(1,\infty)$.
\end{remark}

A subset is \emph{$\top$-normal} if it is the congruence class of $\top$ for some congruence on $A$.

We recall explicitly the classical criterion underlying the next theorem. Mal'tsev's criterion \cite{Mal54} characterises a non-empty congruence class $C\subseteq A$ by the requirement
\[
\tau(C)\cap C\neq\varnothing
\quad\Longrightarrow\quad
\tau(C)\subseteq C
\]
for every unary polynomial $\tau$; compare \cite[Proposition 1.4(3)]{AU92} for the pointed version. An equivalent multivariate reformulation is
\[
q(C^m)\cap C\neq\varnothing
\quad\Longrightarrow\quad
q(C^m)\subseteq C
\]
for every polynomial $q\colon A^m\to A$, $m\geqslant1$. Indeed, starting with any tuple in $C^m$ whose image belongs to $C$, the unary criterion allows its coordinates to be replaced successively by those of any other tuple in $C^m$, while keeping the image in $C$. The converse is the case $m=1$.

If $\top\in C$, the multivariate condition is equivalently
\[
q(\vec a)\in C\quad\Longleftrightarrow\quad q(\vec\top)\in C
\qquad(\vec a\in C^m).
\]
The unary case is the biconditional stated in \cite[Proposition 1.4(4)]{AU92}. By Lemmas~\ref{LemA} and \ref{LemB}, its multivariate right-to-left and left-to-right implications are respectively inductivity and deductivity. The next theorem records this splitting of Mal'tsev's criterion.

\begin{theorem}\label{ThmA}
A non-empty subset $I$ of $A$ is $\top$-normal if and only if it is both $\top$-inductive and $\top$-deductive.
\end{theorem}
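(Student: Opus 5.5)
The plan is to prove both directions directly from the unary characterisations in Lemma~\ref{PropUnary}, using Mal'tsev's criterion only as motivation. This avoids a separate verification of the pointed version of that criterion.

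\emph{Forward direction.} Suppose $I$ is the $\top$-class of a congruence $\theta$. Let $g$ be a unary polynomial and $a\in I$. Then $a\,\theta\,\top$, and compatibility of $\theta$ with polynomials gives $g(a)\,\theta\,g(\top)$. Hence $g(a)\in I$ if and only if $g(\top)\in I$, since $I$ is a $\theta$-class. The two implications of this biconditional are exactly conditions (1) and (2) of Lemma~\ref{PropUnary}, so $I$ is both $\top$-inductive and $\top$-deductive. Alternatively, $\theta$ is a semicongruence containing $I\times\{\top\}$, so $R_I\subseteq\theta$, which gives $R_II\subseteq I$ and $IR_I\subseteq I$ at once.

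\emph{Converse: choice of congruence.} Suppose $I$ is non-empty, inductive and deductive. Then $\top\in I$, since non-empty deductive sets contain $\top$. Define $x\,\theta\,y$ to mean: for every unary polynomial $g$, $g(x)\in I\iff g(y)\in I$. This is the syntactic congruence of $I$.
\begin{itemize}
\item It is clearly an equivalence relation.
\item It is compatible with every basic operation $f$. To replace one argument at a time, let $g$ range over polynomials of the form $h(f(\ldots,z,\ldots))$, with the other arguments as parameters. This is again a unary polynomial, so transitivity handles all coordinates.
\end{itemize}

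\emph{Converse: the $\top$-class is $I$.} First take $a\in I$. For any unary polynomial $g$, Lemma~\ref{PropUnary}(1) gives $g(\top)\in I\Rightarrow g(a)\in I$, and Lemma~\ref{PropUnary}(2) gives the reverse implication. Hence $a\,\theta\,\top$. Conversely, if $x\,\theta\,\top$, take $g$ to be the identity polynomial: since $\top\in I$, we get $x\in I$. Therefore $I=[\top]_\theta$.

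The only delicate points are that $\top\in I$ (this is where non-emptiness is used) and that the syntactic relation is a congruence. Both are routine, so I expect no serious obstacle.
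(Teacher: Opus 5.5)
Your proof is correct, and it takes a different route from the paper's. The paper's main proof is a reduction. It uses Lemma~\ref{PropUnary} to identify the conjunction of inductivity and deductivity with the pointed unary biconditional of Agliano--Ursini, and then invokes Mal'tsev's criterion. Its supplementary argument uses the \emph{least} congruence $C=\operatorname{Cg}(I\times\{\top\})$, written as the increasing union of $R$-zigzags in \eqref{EquF}; closure of $I$ under both directions of $R=R_I$, followed along zigzags from $\top$, gives $[\top]_C\subseteq I$. You instead build the \emph{largest} congruence saturating $I$, the syntactic congruence $\theta$, and check directly that $I$ is its $\top$-class, so you prove the pointed Mal'tsev criterion from scratch. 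Your route needs neither the zigzag description nor the fact that the equivalence relation generated by a semicongruence is compatible. It relies on Lemma~\ref{PropUnary}, whose converse direction uses the one-coordinate-at-a-time substitution. The paper's relational argument works with the one-step operators $R_II$ and $IR_I$ themselves and needs no unary reduction, which fits the rank analysis that follows. Your argument also recovers the paper's identification of the congruence. Since $I\times\{\top\}\subseteq\theta$, you have $C\subseteq\theta$, and hence
\[
I\subseteq[\top]_C\subseteq[\top]_\theta=I,
\]
so $I=[\top]_{\operatorname{Cg}(I\times\{\top\})}$ as well.
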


\begin{proof}
By Lemma~\ref{PropUnary}, the conjunction is precisely the biconditional in \cite[Proposition 1.4(4)]{AU92}, with $\top$ in place of $0$. A non-empty deductive set contains $\top$, so Mal'tsev's criterion applies. Naming $\top$ by an additional constant, if necessary, changes neither the congruences nor the polynomial maps of $A$.
\end{proof}

For completeness, the same conclusion follows directly from the relation construction. If $C$ is the congruence generated by $I\times\{\top\}$, then
\begin{equation}\label{EquF}
C=R\cup R^\circ R\cup RR^\circ R\cup R^\circ RR^\circ R\cup\cdots.
\end{equation}
Reflexivity makes this an increasing union. It is the equivalence relation generated by $R$, and is compatible. A normal set is closed under both directions of $R$. Conversely, if $I$ is non-empty and closed under both directions, then $\top\in I$, and following any finite $R$-zigzag from $\top$ shows that $[\top]_C\subseteq I$. The reverse inclusion follows from $I\times\{\top\}\subseteq C$.

We thank H.~Peter Gumm for directing us to Mal'tsev's original formulation and for pointing out its application to Theorem~\ref{ThmA} (private communication). Agliano and Ursini explicitly state the pointed unary biconditional, but do not there introduce its separate directions as the two classes of subsets studied here.

\subsection{Clots and ideals}

Following \cite{AU92}, a \emph{$\top$-clot} is a subset containing $\top$ and closed under every polynomial $q$ satisfying $q(\vec\top)=\top$. The least clot containing $I$ is
\[
\clo I=R_I\top
=\{t(\vec x,\vec y)\mid \vec y\in I,\ t(\vec x,\vec\top)=\top\}.
\]
This is the construction in \cite[Propositions 1.3 and 2.2]{AU92}; our generating relation is the opposite of theirs. In particular, $\clo{\clo I}=\clo I$.

When $\top=0$ is a constant of a variety $V$, recall that a \emph{$V$-ideal term} in $\vec y$ is a term $t(\vec x,\vec y)$ for which $V\models t(\vec x,\vec0)=0$. A $V$-ideal is a subset containing $0$ and closed under these terms, with arbitrary parameters in the $\vec x$ positions \cite[Definition 1.2]{AU92}. Consequently, for subsets containing $0$,
\[
0\text{-normal}\ \Longrightarrow\
0\text{-inductive}\ \Longrightarrow\
0\text{-clot}\ \Longrightarrow\
V\text{-ideal}.
\]
Indeed, the first implication is Theorem~\ref{ThmA}; the second follows from Lemma~\ref{LemA}; and the third follows because each ideal term, after fixing its parameters, is a polynomial taking $\vec0$ to $0$.

For semirings, the ideal-term notion agrees with the usual one: $y_1+y_2$, $xy$, and $yx$ are ideal terms, and expansion of any ideal term as a sum of products proves the converse. Moreover, Theorem~\ref{ThmD} below gives, for non-empty $I$,
\[
\mathcal I_0(I)=I+\overline I,
\qquad
\mathcal D_0(I)=\{x\mid x+y\in I\text{ for some }y\in\overline I\},
\]
where $\overline I$ is the ideal generated by $I$. Thus, when $0\in I$, inductivity is exactly idealhood; when $I$ is an ideal, deductivity is exactly subtractivity. The following example explains why inductivity cannot be replaced by idealhood in the general decomposition.

\begin{example}\label{ExIdealDeductive}
Let $A=\{0,a,b\}$ have a constant $0$ and one unary operation
\[
f(0)=a,\qquad f(a)=b,\qquad f(b)=b,
\]
and let $I=\{0,a\}$. The unary polynomial maps are the identity, $f$, and constants. Lemma~\ref{PropUnary} shows that $I$ is $0$-deductive. It is also a clot: every polynomial taking the all-zero tuple to $0$ is a projection or the constant $0$. Hence it is an ideal, even relative to the variety generated by $A$. Nevertheless it is not inductive, since $f(0)=a\in I$ but $f(a)=b\notin I$. It is not normal either: a congruence identifying $0$ with $a$ must also identify $a=f(0)$ with $b=f(a)$.
\end{example}

\subsection{Positive cones and deductive filters}

There is an exact relational description of inductive sets containing the distinguished element.

\begin{theorem}\label{PropPreorder}
For a subset $I$ containing $\top$, the following are equivalent:
\begin{enumerate}
\item $I$ is $\top$-inductive;
\item $I=P\top$ for some compatible preorder $P$ on $A$.
\end{enumerate}
\end{theorem}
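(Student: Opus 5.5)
For (2)$\Rightarrow$(1), assume $I=P\top$ with $P$ a compatible preorder. Since $P$ is reflexive and compatible, it is a semicongruence. Every $a\in I$ satisfies $aP\top$, so $I\times\{\top\}\subseteq P$, and hence $R_I\subseteq P$ by minimality of $R_I$. Now let $x\in\ind I=R_I I$, say $xR_I y$ with $y\in I$. Then $xPy$ and $yP\top$, and transitivity gives $xP\top$, that is, $x\in I$. So $\ind I\subseteq I$, and $I$ is $\top$-inductive.

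For (1)$\Rightarrow$(2), assume $I$ is $\top$-inductive with $\top\in I$. The natural candidate is the transitive closure of $R_I$,
\[
P=R_I\cup R_IR_I\cup R_IR_IR_I\cup\cdots.
\]
This is reflexive and transitive. It is also compatible, because relational composites of compatible relations are compatible, and an increasing union of compatible relations is compatible by finitarity. So $P$ is a compatible preorder.

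It remains to check $P\top=I$.
\begin{itemize}
\item[$\supseteq$:] every $a\in I$ satisfies $aR_I\top$, hence $aP\top$.
\item[$\subseteq$:] suppose $x=x_0R_Ix_1R_I\cdots R_Ix_n=\top$. Then $x_n=\top\in I$. Applying $\ind I\subseteq I$ repeatedly gives $x_{n-1}\in I$, then $x_{n-2}\in I$, and so on down to $x=x_0\in I$.
\end{itemize}
Hence $I=P\top$.

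The only point needing care is compatibility of $P$. It follows directly: if $a_iR_I^{k_i}b_i$ for each argument of an operation $f$, pad each chain with reflexive steps to a common length $k$. Applying $f$ coordinatewise then gives $f(\vec a)R_I^kf(\vec b)$. Everything else is immediate, since inductivity of $I$ is exactly the statement that $I$ is closed backwards along single $R_I$-steps.
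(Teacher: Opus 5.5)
Your proof is correct and follows essentially the same route as the paper. For (1)$\Rightarrow$(2) you take $P=\bigcup_{n\geqslant0}R_I^n$, and for the converse you use $R_I\subseteq P$ together with transitivity. You also spell out two points the paper leaves implicit: the padding argument for compatibility of $P$, and the step-by-step backward chain showing $P\top\subseteq I$.
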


\begin{proof}
If $I$ is inductive, let $P=\bigcup_{n\geqslant0}R_I^n$. This is a compatible preorder. Since $R_I I\subseteq I$ and $\top\in I$, we have $P\top\subseteq I$; the reverse inclusion follows from $I\times\{\top\}\subseteq R_I$. Conversely, if $I=P\top$, then $R_I\subseteq P$ and $R_I I\subseteq P(P\top)=P\top=I$.
\end{proof}

In monoids, the corresponding subsets have been explicitly studied as \emph{positive cones}. Martins-Ferreira and Sobral \cite[Section 2.2]{MS23} characterise the submonoids $M$ which are positive cones of compatible preorders by
\[
u\in M,\quad xy\in M\quad\Longrightarrow\quad xuy\in M.
\]
This is precisely $1$-inductivity. One direction follows by taking $g(z)=xzy$; the other follows by inserting the occurrences of $u$ in an arbitrary unary monoid polynomial one at a time. Also, a $1$-inductive subset containing $1$ is a submonoid, by taking $g(z)=mz$ with $m\in M$. Thus positive cones in \cite{MS23} are exactly the $1$-inductive sets containing $1$, with the opposite orientation for the preorder in Theorem~\ref{PropPreorder}. Section~2.1 of that paper also gives the two-sided condition characterising normal submonoids.

Deductivity has a direct logical instance. If the distinguished element is $1$ and the algebra has an operation $\to$ satisfying $1\to b=b$, then taking $g(z)=z\to b$ in Lemma~\ref{PropUnary} gives
\[
a\in I,\quad a\to b\in I\quad\Longrightarrow\quad b\in I,
\]
the rule of modus ponens.

\begin{theorem}\label{PropHeyting}
In a Heyting algebra, the non-empty $1$-deductive sets are precisely the deductive filters of intuitionistic propositional logic.
\end{theorem}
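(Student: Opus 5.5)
We must show that a non-empty subset $I$ of a Heyting algebra $H$ is $1$-deductive if and only if it is a deductive filter. Here a deductive filter is a set $F$ containing $1$ and closed under modus ponens: if $a\in F$ and $a\to b\in F$, then $b\in F$. Equivalently, $F$ is a lattice filter: it contains $1$, is upward closed and is closed under $\wedge$. We work throughout with the unary characterisation of Lemma~\ref{PropUnary}(2).

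\emph{Deductive implies filter.} Let $I$ be non-empty and $1$-deductive. As noted after the definition, $1\in I$. For modus ponens, take the unary polynomial $g(z)=z\to b$. If $a\in I$ and $g(a)=a\to b\in I$, then Lemma~\ref{PropUnary}(2) gives $g(1)=1\to b=b\in I$. So $I$ is a deductive filter. We may also check the lattice-filter properties directly. For upward closure, let $a\in I$ and $a\leqslant b$, and use $g(z)=z\to b$ again: since $a\to b=1\in I$, we get $b\in I$. For meets, let $a,b\in I$ and use $g(z)=b\to(z\wedge b)$. Then $g(a)=b\to(a\wedge b)$, and $b\to(a\wedge b)\geqslant$\ldots{} This route is unnecessary, since modus ponens together with $a\to(b\to(a\wedge b))=1$ already yields closure under meets.

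\emph{Filter implies deductive.} This is the main step. Let $F$ be a filter, $a\in F$, and $g$ a unary polynomial with $g(a)\in F$; we must show $g(1)\in F$. The key claim is the inequality
\[
a\wedge g(a)\leqslant g(1)\quad\text{in any Heyting algebra.}
\]
Granting it, we have $a\wedge g(a)\in F$ by closure under meets, and hence $g(1)\in F$ by upward closure.

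To prove the claim, it is enough to establish the stronger statement
\[
a\wedge g(x)=a\wedge g(x\wedge a\text{-replaced})\ldots
\]
or, more cleanly, the substitution property
\[
a\wedge t(\vec c,x)=a\wedge t(\vec c,y)\quad\text{whenever } a\wedge x=a\wedge y,
\]
for every term $t$. This says that the relation $x\equiv_a y\iff a\wedge x=a\wedge y$ is a congruence. It is well known to be the congruence associated with the principal filter ${\uparrow}a$. Nonetheless we verify compatibility by induction on terms, which needs a check for each basic operation. For $\wedge$ and $\vee$ the check is immediate by distributivity. For $\to$ we use the identity
\[
a\wedge(x\to y)=a\wedge\bigl((a\wedge x)\to(a\wedge y)\bigr),
\]
which follows from residuation. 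The constants $0$ and $1$ are trivial.

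Now apply this with $x=a$ and $y=1$: since $a\wedge a=a=a\wedge 1$, we obtain $a\wedge g(a)=a\wedge g(1)\leqslant g(1)$, which is the claim. The only real computation in the whole argument is the $\to$-compatibility identity, and that is where we expect the main obstacle to lie.
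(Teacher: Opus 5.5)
Your proof is correct. The direction from deductive to filter is the paper's argument. For the converse you take a different route from the paper.

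The paper argues structurally. Every lattice filter of a Heyting algebra is the class of $1$ for a congruence, so it is $1$-normal, hence $1$-deductive by Theorem~\ref{ThmA} (one half of Mal'tsev's criterion).

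You instead verify the unary condition of Lemma~\ref{PropUnary}(2) directly. For each $a\in F$ you show that $x\equiv_a y\iff a\wedge x=a\wedge y$ is a congruence; in effect you prove normality of the principal filter ${\uparrow}a$ rather than of $F$ itself. From this you extract the uniform inequality $a\wedge g(a)=a\wedge g(1)\leqslant g(1)$ for every unary polynomial $g$. Meet-closure and upward closure of $F$ then finish the argument.

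The $\to$-identity you flag as the obstacle is a two-line residuation check, and you should include it:
\begin{itemize}
\item $a\wedge(x\to y)\wedge(a\wedge x)\leqslant a\wedge y$ gives $a\wedge(x\to y)\leqslant(a\wedge x)\to(a\wedge y)$.
\item $x\wedge a\wedge\bigl((a\wedge x)\to(a\wedge y)\bigr)\leqslant y$ gives $a\wedge\bigl((a\wedge x)\to(a\wedge y)\bigr)\leqslant x\to y$.
\end{itemize}

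What each route buys: yours is self-contained and avoids citing the filter--congruence correspondence. The paper's is shorter given Theorem~\ref{ThmA}, and it makes visible that deductivity here is exactly one half of normality. Underneath, both rest on the same fact that filters are congruence classes of $1$.

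Before writing this up, delete the abandoned attempt with $g(z)=b\to(z\wedge b)$ and the garbled sentence about ``$x\wedge a$-replaced''. Neither is needed, and as written they are not meaningful.
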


\begin{proof}
Let $I$ be non-empty and $1$-deductive. Then $1\in I$ and $I$ is closed under modus ponens. If $a\in I$ and $a\leqslant b$, then $a\to b=1$, so $b\in I$. If $a,b\in I$, the identity $a\to(b\to(a\wedge b))=1$ and two applications of modus ponens give $a\wedge b\in I$. Thus $I$ is a lattice filter. Conversely, every lattice filter of a Heyting algebra is the class of $1$ for a congruence, so is $1$-deductive by Theorem~\ref{ThmA}. Lattice filters are exactly the intuitionistic deductive filters; see \cite[Examples 1.9 and 2.6]{Mor23}.
\end{proof}

For Heyting algebras, therefore, the analogy with deductive reasoning in the Introduction gives precisely the familiar notion of a deductive filter.

\subsection{Closure processes}

The operators $\mathcal I_\top$ and $\mathcal D_\top$ are extensive and monotone. Their polynomial descriptions show that they preserve directed unions: any witnessing term involves only finitely many elements of the input set. Consequently, the increasing chains
\[
\indn I0=I\subseteq\indn I1=\ind I\subseteq
\indn I2=\ind{\ind I}\subseteq\cdots
\]
and
\[
\dedn I0=I\subseteq\dedn I1=\ded I\subseteq
\dedn I2=\ded{\ded I}\subseteq\cdots
\]
have unions $\indn I\infty$ and $\dedn I\infty$ which are, respectively, the least inductive and deductive subsets containing $I$. In particular,
\[
\ind{\indn I\infty}=\indn I\infty,
\qquad
\ded{\dedn I\infty}=\dedn I\infty.
\]
The number of steps needed to reach these closures gives the ranks studied below.

The polynomial descriptions also show that, for a homomorphism $h\colon A\to B$ preserving the distinguished elements,
\[
h(\mathcal I_\top(I))\subseteq\mathcal I_\top(h(I)),
\qquad
h(\mathcal D_\top(I))\subseteq\mathcal D_\top(h(I)).
\]
In particular, inverse images of inductive sets are inductive, and inverse images of deductive sets are deductive.

\begin{theorem}\label{ThmC}
Let $I\subseteq A$, and let $R=R_I$. Then
\[
\indn I\infty=\bigcup_{n\geqslant0}R^n I.
\]
For every $n\geqslant0$,
\[
R^n I\subseteq\indn In\subseteq R^{2^n-1}I.
\]
For every $n\geqslant1$,
\begin{equation}\label{EqGeneralDeductionBound}
IR^n\subseteq\dedn In
\subseteq IR(R^\circ R)^{2^{n-1}-1}.
\end{equation}
In particular, if $R$ is symmetric, then
\[
\dedn In\subseteq IR^{2^n-1}\qquad(n\geqslant0),
\qquad
\dedn I\infty=\bigcup_{n\geqslant0}IR^n.
\]
Here a zeroth power of a relation is the diagonal relation.
\end{theorem}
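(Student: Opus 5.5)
The plan is to prove all four assertions by induction on $n$, using two elementary facts about semicongruences. First, composites and opposites of semicongruences are again semicongruences: reflexivity is clear, and compatibility holds because basic operations act coordinatewise on witnesses of a composite. Second, $R_J$ is the \emph{least} semicongruence containing $J\times\{\top\}$. Hence $J\subseteq K$ implies $R_J\subseteq R_K$. More generally, to bound $R_J$ from above it suffices to exhibit any semicongruence $T$ with $J\times\{\top\}\subseteq T$.

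\emph{Lower bounds.} Since every iterate contains $I$, monotonicity gives $R=R_I\subseteq R_{\indn In}$. Assuming $R^nI\subseteq\indn In$, we obtain
\[
R^{n+1}I=R(R^nI)\subseteq R_{\indn In}\,\indn In=\indn I{n+1}.
\]
The deductive case is dual. Assuming $IR^n\subseteq\dedn In$, we get
\[
IR^{n+1}=(IR^n)R\subseteq \dedn In\,R_{\dedn In}=\dedn I{n+1}.
\]

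\emph{Upper bound for induction.} Set $J=\indn In$ and $m=2^n-1$, and assume $J\subseteq R^mI$. For $y\in J$ there is $x\in I$ with $yR^mx$, and $xR\top$, so $yR^{m+1}\top$. Thus $J\times\{\top\}\subseteq R^{m+1}$. Since $R^{m+1}$ is a semicongruence, $R_J\subseteq R^{m+1}$, and therefore
\[
\indn I{n+1}=R_JJ\subseteq R^{m+1}R^mI=R^{2m+1}I=R^{2^{n+1}-1}I.
\]
Combining this with the lower bound, $\indn I\infty$ is sandwiched between two copies of $\bigcup_n R^nI$, which gives the union formula.

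\emph{Upper bound for deduction.} This is the step where the bookkeeping of opposites is the main obstacle, because the orientation is reversed. Set $J=\dedn In$, $k=2^{n-1}-1$ and $S=R(R^\circ R)^k$, and assume $J\subseteq IS$. The base case $n=1$ (with $k=0$) is $\dedn I1=IR$. For $y\in J$ pick $x\in I$ with $xSy$. Then $yS^\circ x$ and $xR\top$. Since $(R^\circ R)^\circ=R^\circ R$, we have
\[
S^\circ=(R^\circ R)^kR^\circ,\qquad S^\circ R=(R^\circ R)^{k+1}.
\]
Hence $J\times\{\top\}\subseteq(R^\circ R)^{k+1}$, a semicongruence, so $R_J\subseteq(R^\circ R)^{k+1}$. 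This yields
\[
\dedn I{n+1}=JR_J\subseteq IR(R^\circ R)^{2k+1}=IR(R^\circ R)^{2^n-1},
\]
which is \eqref{EqGeneralDeductionBound} at $n+1$. If $R$ is symmetric, then $R^\circ R=R^2$, so the bound becomes $IR^{1+2(2^{n-1}-1)}=IR^{2^n-1}$; the case $n=0$ is trivial. Combining this with $IR^n\subseteq\dedn In$ gives $\dedn I\infty=\bigcup_n IR^n$.
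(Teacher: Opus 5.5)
Your proof is correct and is essentially the paper's argument. It uses the same inductions on $n$, and it obtains the upper bounds from minimality of $R_J$ against the semicongruences $R^{2^n}$ and $S^\circ R=(R^\circ R)^{2^{n-1}}$. The only cosmetic difference is that you carry the closed form $R(R^\circ R)^{2^{n-1}-1}$ through the induction, whereas the paper defines $P_0=\Delta_A$, $P_{n+1}=P_nP_n^\circ R$ and solves this recurrence afterwards.
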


\begin{proof}
We prove the induction bounds by induction on $n$. They hold for $n=0$.
Let $R_n$ be the semicongruence generated by
$\indn In\times\{\top\}$. Assuming the bounds at $n$, we have
\[
\indn I{n+1}=R_n\indn In\subseteq R_nR^{2^n-1}I.
\]
Since $I\times\{\top\}\subseteq R$,
\[
\indn In\times\{\top\}\subseteq
(R^{2^n-1}I)\times\{\top\}\subseteq R^{2^n}.
\]
The relation $R^{2^n}$ is a semicongruence, so $R_n\subseteq R^{2^n}$ and
\[
\indn I{n+1}\subseteq R^{2^{n+1}-1}I.
\]
Also $R\subseteq R_n$, whence
\[
R^{n+1}I\subseteq R\indn In
\subseteq R_n\indn In=\indn I{n+1}.
\]
Taking unions gives the first assertion.

For deduction, put $J_n=\dedn In$ and $S_n=R_{J_n}$. Since
$R\subseteq S_n$, induction gives $IR^n\subseteq J_n$.
For the upper bound define semicongruences
\[
P_0=\Delta_A,\qquad P_{n+1}=P_nP_n^\circ R.
\]
We claim that $J_n\subseteq IP_n$. This holds at $n=0$. If it holds
at $n$ and $j\in J_n$, choose $i\in I$ with $iP_nj$.
As $iR\top$, we get $jP_n^\circ R\top$. Thus
\[
J_n\times\{\top\}\subseteq P_n^\circ R,\qquad
S_n\subseteq P_n^\circ R,
\]
and consequently
\[
J_{n+1}=J_nS_n\subseteq IP_nP_n^\circ R=IP_{n+1}.
\]
The recurrence gives
\[
P_n=R(R^\circ R)^{2^{n-1}-1}\qquad(n\geqslant1),
\]
proving~\eqref{EqGeneralDeductionBound}. If $R=R^\circ$, this is
$R^{2^n-1}$. The symmetric upper bound, together with
$IR^n\subseteq J_n$, gives the stated union formula.
\end{proof}

The symmetry hypothesis holds for $R_I$ in a $\top$-subtractive variety. Indeed, if $s(x,x)=\top$ and $s(x,\top)=x$, then $(a,\top)\in R_I$ gives
\[
(\top,a)=s\bigl((a,a),(a,\top)\bigr)\in R_I.
\]
Thus $R_I^\circ$ contains the generating pairs of $R_I$, and minimality gives $R_I=R_I^\circ$. This observation applies, in particular, to the subtractive and Mal'tsev settings of Subsections~\ref{SubsecSubtractive} and~\ref{SubsecMalcev}.

\begin{remark}\label{RemDeductionCaution}
The alternating bound~\eqref{EqGeneralDeductionBound} does not require symmetry. Without an additional hypothesis, however, its right side cannot be replaced by $IR^{2^n-1}$, nor does the deduction union formula in Theorem~\ref{ThmC} hold, even when $R_I$ is transitive. In the semiring $\mathbb N_0$, take $I=\{3,4\}$. The formula for deduction in Theorem~\ref{ThmD} gives
\[
\mathcal D_0(I)=\{0,1,3,4\},
\qquad
\mathcal D_0^2(I)=\{0,1,2,3,4\}.
\]
Indeed, $\overline I=\{3m+4n\mid m,n\in\mathbb N_0\}$, whereas the ideal generated by $\mathcal D_0(I)$ is all of $\mathbb N_0$. On the other hand, $R_I=\{(x+y,x)\mid y\in\overline I\}$ is transitive, so $IR_I^n=IR_I$ for every $n\geqslant1$. Deduction must therefore be iterated using the relations generated by the successive sets, not just powers of the initial relation.
\end{remark}

\smallskip

\subsection{Ranks and preliminary bounds}\label{SubsecRankPreliminaries}

\begin{definition}
Consider a variety $V$ of algebras over the same signature, and a constant $\top$ in this signature. We say that $V$ has \emph{$\top$-inductive rank} $n$ when $n$ is the smallest non-negative integer such that $\indn{I}{n}=\indn{I}{n+1}$ for every non-empty set $I$ in every algebra $A$ of the variety. When such $n$ does not exist, we say that the variety has $\top$-inductive rank $\infty$. The notion of \emph{$\top$-deductive rank} is defined similarly.
\end{definition}

We write $r_{\mathcal I}(V)$ and $r_{\mathcal D}(V)$ for these ranks when the distinguished constant is understood. We first give preliminary bounds and standard examples. The general rank spectrum theorem is proved in Section~\ref{SecRanks}.

\subsection{Subtractive varieties}\label{SubsecSubtractive}

Following Ursini \cite{Urs94}, a variety having a constant $0$ is \emph{$0$-subtractive} if there is a binary term $s$ satisfying $s(x,x)=0$ and $s(x,0)=x$.
By a \emph{non-trivial variety} we mean a variety that contains algebras having $2$ or more elements.

\begin{theorem}\label{ThmK}
A $0$-subtractive variety has $0$-inductive rank at most $2$ and $0$-deductive rank at most $2$. For a non-empty seed $I$, both second iterates equal its generated $0$-clot. Moreover, $0$-inductive sets are the same as $0$-deductive sets and the non-empty ones are the same as $0$-clots.
\end{theorem}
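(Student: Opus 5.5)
The plan is to work throughout with the symmetry of $R=R_I$. This was established just after Theorem~\ref{ThmC} from $s(x,x)=0$ and $s(x,0)=x$. I would first prove the structural statement, that non-empty inductive sets, non-empty deductive sets and clots all coincide. The rank bounds then follow from Theorem~\ref{ThmC} together with symmetry.

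\emph{Step 1: non-empty inductive and deductive sets are clots.} Let $J$ be non-empty and inductive, and pick $a\in J$. The unary polynomial $g(z)=s(a,z)$ has $g(0)=a\in J$, so Lemma~\ref{PropUnary} gives $g(a)=0\in J$. The chain of implications before Example~\ref{ExIdealDeductive} then shows that $J$ is a clot.

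Now let $J$ be non-empty and deductive, so $0\in J$. Take a polynomial $q$ with $q(\vec0)=0$ and $\vec a\in J$, and consider $p(\vec z)=s(q(\vec a),q(\vec z))$. Then $p(\vec a)=0\in J$, so Lemma~\ref{LemB} gives $p(\vec0)=s(q(\vec a),0)=q(\vec a)\in J$. Hence $J$ is a clot.

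\emph{Step 2: clots are inductive and deductive.} Let $C$ be a clot, $a\in C$, and $g$ a unary polynomial. For induction, assume $g(0)\in C$ and put
\[
h(z,w)=s\bigl(g(z),s(g(0),w)\bigr).
\]
Then $h(0,0)=s(g(0),g(0))=0$, so closure of $C$ gives $h(a,g(0))=s(g(a),0)=g(a)\in C$. For deduction, assume $g(a)\in C$ and put
\[
h(z,w)=s\bigl(g(0),s(g(z),w)\bigr).
\]
Then $h(0,0)=0$, and $h(a,g(a))=s(g(0),0)=g(0)\in C$. Lemma~\ref{PropUnary} then finishes this step.

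Combining Steps 1 and 2, the three classes of non-empty sets coincide. Since the empty set is both inductive and deductive, inductive sets equal deductive sets. In particular, for non-empty $I$ both closures $\indn I\infty$ and $\dedn I\infty$ equal the least clot containing $I$, namely $\clo I=R0$.

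\emph{Step 3: ranks.} Fix $a\in I$. Then $aR0$, so by symmetry $0Ra$ and $0\in RI$. Hence
\[
\clo I=R0\subseteq R^2I\subseteq\indn I2\subseteq\indn I\infty=\clo I,
\]
using Theorem~\ref{ThmC}. For deduction, take $x\in R0$. Then $0Rx$ by symmetry, and $aR0$, so $x\in IR^2\subseteq\dedn I2\subseteq\clo I$. Thus both second iterates equal $\clo I$, and both ranks are at most $2$.

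The only genuinely delicate step is Step~2: one must choose polynomials that vanish at $\vec0$ yet evaluate to the desired element at a tuple from $C$. The nested-$s$ trick above handles this.
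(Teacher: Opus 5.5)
Your proof is correct, but it runs in the opposite order from the paper's.

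The paper works entirely with powers of $R$. It uses the upper bound $\indn{I}{n}\subseteq R^{2^n-1}I$ from Theorem~\ref{ThmC}, together with its symmetric analogue for deduction. By successive subtraction it shows $R^3I\subseteq R^2I$ and $IR^3\subseteq IR^2$, so $\indn{I}{2}=R^2I$ and $\dedn{I}{2}=IR^2$. The same chasing then shows that both equal $R0=\clo{I}$. Only at the end does the paper read off the coincidence of inductive sets, deductive sets and clots, from the equality of the three closure operators.

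You instead prove the coincidence of the set classes first, directly with explicit subtraction polynomials. In particular, your Step~2 is a self-contained proof that clots are closed in both directions, hence normal, without any relation chasing. You then get the rank bound from the sandwich $R0\subseteq R^2I\subseteq\indn{I}{2}\subseteq\indn{I}{\infty}=R0$ and its deductive counterpart. This uses only the easy lower bounds of Theorem~\ref{ThmC} and never the $2^n-1$ upper bounds.

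Nothing is lost. Your sandwich also yields $R^2I=R0=IR^2$ and $R^kI\subseteq\indn{I}{k}\subseteq R^2I$ for all $k$, which is exactly what the paper later reuses in Theorem~\ref{ThmStrongUnitalRank}. The paper's version, in exchange, stays within the relational calculus set up in Theorem~\ref{ThmC}.
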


\begin{proof}
The empty set is both $0$-inductive and $0$-deductive. Suppose $I$ is not empty. Let $R$ denote the semicongruence generated by $I\times\{\top\}$, where $\ast=0$. Let $x\in RRRI$. Then 
$$xRyRzRt, \quad t\in I.$$ Subtracting $(t,\top)\in R$ from $(t,t)$ gives $(\top,t)\in R$. Subtracting this from $(z,t)\in R$ gives $(z,\top)\in R$. Subtracting this from $(z,z)\in R$ gives $(\top,z)\in R$. Continuing this way we eventually get $(x,\top)\in R$. Since $(\top,t)\in R$, we get $(x,t)\in RR$ and hence $x\in RRI$. So $RRRI\subseteq RRI$. This implies that $R^nI\subseteq RRI$ for any $n$. Then, by Theorem~\ref{ThmC}, $\indn{I}{3}\subseteq R^7I\subseteq RRI\subseteq \indn{I}{2}$ and hence $\indn{I}{2}=\indn{I}{3}=RRI$. So the $0$-inductive rank is at most $2$ and $RRI$ is the smallest $0$-inductive set containing $I$.

Now let $t\in IRRR$. Then $$xRyRzRt,\quad x\in I.$$
Subtracting $(x,\top)\in R$ from $(x,y)\in R$ we get $(\top,y)\in R$. This implies $(y,\top)\in R$. Continuing this way we eventually get $(\top,t)\in R$. Since we also have $(x,\top)\in R$, where $x\in I$, we obtain $t\in IRR$. As before, this implies $\dedn{I}{2}=\dedn{I}{3}=IRR$. So the $0$-deductive rank is at most $2$ and $IRR$ is the smallest $0$-deductive set containing $I$. 

Now let $x\in RRI$. Then $xRyRz,$ where $z\in I.$
By successive subtraction as before, we get $x\in R\top$. Conversely, if $x\in R\ast$, then we can deduce $x\in RRI$ by the fact that $I$ is non-empty. So $RRI=R\ast$. We can similarly show that $IRR=R\ast$. Note that $R\ast$ is the smallest clot containing $I$. So the smallest $0$-inductive set containing non-empty $I$ is the same as smallest $0$-deductive set containing $I$ and the same as smallest clot containing $I$. This proves the second part of the theorem.  
\end{proof}

The well known fact that in a $0$-subtractive variety $0$-clots coincide with $0$-normal sets follows from the previous theorem and Theorem~\ref{ThmA}.

\begin{corollary}\label{CorA}
In a $0$-subtractive variety, the following four classes of sets are the same: non-empty $0$-inductive sets, non-empty $0$-deductive sets, $0$-normal sets, $0$-clots.
\end{corollary}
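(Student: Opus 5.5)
The corollary follows by combining Theorem~\ref{ThmK} with Theorem~\ref{ThmA}; no new construction is needed. Write $\mathcal N$, $\mathcal P$, $\mathcal Q$, $\mathcal K$ for the classes of non-empty $0$-inductive sets, non-empty $0$-deductive sets, $0$-normal sets and $0$-clots, respectively.

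First, the last assertion of Theorem~\ref{ThmK} already gives $\mathcal N=\mathcal P=\mathcal K$. To recall why: for non-empty $I$, the least $0$-inductive set containing $I$ and the least $0$-deductive set containing $I$ both equal $R_I0=\clo I$. So $I$ is $0$-inductive iff $I=\clo I$ iff $I$ is $0$-deductive. Also $I=\clo I$ holds exactly when $I$ is a clot, since every clot is non-empty (it contains $0$) and $\clo{}$ is idempotent.

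Second, we bring in normality. A $0$-normal set is a congruence class containing $0$, so it is non-empty. By Theorem~\ref{ThmA}, a non-empty set is $0$-normal iff it is both $0$-inductive and $0$-deductive. Since $\mathcal N=\mathcal P$, a non-empty set that is $0$-inductive is automatically $0$-deductive. Hence $\mathcal Q=\mathcal N\cap\mathcal P=\mathcal N$, and all four classes coincide.

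The only points needing care are the empty set and where $\top=0$ lives. Every clot and every normal set contains $0$, which is why both are compared only with the non-empty inductive and deductive sets. Theorem~\ref{ThmA} applies with $\top=0$, because $0$ is a constant of the signature, so no extra naming is required. I expect no genuine obstacle: the substantive work, the collapse of both processes to $R_I0$ via the subtraction term, was done in Theorem~\ref{ThmK}.
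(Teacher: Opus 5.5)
Your proposal is correct and follows the paper's own argument: the paper also obtains the corollary directly from Theorem~\ref{ThmK}, which identifies the non-empty $0$-inductive sets, the non-empty $0$-deductive sets and the $0$-clots, together with Theorem~\ref{ThmA}, which adds the $0$-normal sets. Your extra care about non-emptiness, and about $0$ already being a constant of the signature, is sound.
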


\subsection{Mal'tsev and strongly unital varieties}\label{SubsecMalcev}

A Mal'tsev term improves the bound to one; the same improvement holds in the presence of subtraction and a unital term.

\begin{theorem}\label{ThmMalcevRank}
Consider an algebra $A$ admitting a Mal'tsev term operation, i.e., a term operation $p$ such that the identities $p(x,y,y)=x$ and $p(x,x,y)=y$ hold in the algebra. Then for any set $I\subseteq A$ and any element $\ast\in A$, we have $\ind I=\ded I$ and this set is simultaneously the smallest $\ast$-inductive set containing $I$ and the smallest $\ast$-deductive set containing $I$.
\end{theorem}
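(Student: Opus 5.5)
The plan is to work with the semicongruence $R=R_I$ and use the standard fact that in an algebra with a Mal'tsev term every reflexive compatible relation is a congruence. I will first verify this for $R$ directly.

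For symmetry, given $(a,b)\in R$, apply $p$ to the triples $(a,a)$, $(a,b)$, $(b,b)$, all of which lie in $R$ by reflexivity. This gives $\bigl(p(a,a,b),p(a,b,b)\bigr)=(b,a)\in R$.

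For transitivity, given $aRbRc$, apply $p$ to $(a,b)$, $(b,b)$, $(b,c)$. This gives $\bigl(p(a,b,b),p(b,b,c)\bigr)=(a,c)\in R$.

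Hence $R=R^\circ$ and $RR\subseteq R$, so $R$ is a congruence on $A$. The case $I=\varnothing$ is trivial: then $R$ is the diagonal and $\ind I=\ded I=\varnothing$, which is both inductive and deductive. So assume $I\neq\varnothing$.

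Next I will identify both one-step sets with the $R$-class of $\top$. Since $I\times\{\top\}\subseteq R$ and $R$ is an equivalence, all elements of $I$ lie in the class $[\top]_R$. Then $RI=\{x\mid xRy,\ y\in I\}=[\top]_R$, and by symmetry $IR=[\top]_R$ as well. This gives $\ind I=\ded I=[\top]_R$.

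Now set $C=[\top]_R$. It is a congruence class of $A$ containing $\top$, so it is $\top$-normal. By Theorem~\ref{ThmA} it is therefore both $\top$-inductive and $\top$-deductive, and it contains $I$.

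Minimality follows from monotonicity of the iteration. Any $\top$-inductive set $J\supseteq I$ satisfies $\ind I\subseteq\ind J=J$, because $R_I\subseteq R_J$ when $I\subseteq J$. Likewise, any $\top$-deductive $J\supseteq I$ satisfies $\ded I\subseteq\ded J=J$. So $C$ is simultaneously the least $\top$-inductive and the least $\top$-deductive set containing $I$.

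Alternatively, Theorem~\ref{ThmC} already gives $\indn I\infty=\bigcup_n R^nI$, which equals $RI$ by transitivity. In the symmetric case it also gives $\dedn I\infty=\bigcup_n IR^n=IR$.

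There is no serious obstacle. The only point needing care is that $\top$ is an arbitrary element rather than a constant, and nothing above uses that $\top$ is named in the signature.
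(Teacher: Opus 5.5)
Your proof is correct and essentially the paper's: the paper also rests on $R_I$ being a congruence, which it cites as well known where you verify symmetry and transitivity directly with $p$, and it then reads off $RI=IR$ as both closures from the union formulas of Theorem~\ref{ThmC}, which is your stated alternative. Your main route instead identifies $RI=IR=[\top]_R$, uses Theorem~\ref{ThmA} for its inductivity and deductivity, and gets minimality from monotonicity of $J\mapsto R_JJ$ and $J\mapsto JR_J$; this is equally valid.
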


\begin{proof}
It is well known that the presence of a Mal'tsev term operation in an algebra forces every reflexive compatible relation on that algebra to be a congruence. Then, by Theorem~\ref{ThmC},
$$\indn{I}{\infty}=I\cup RI=RI=IR=I\cup IR=\dedn{I}{\infty},$$
which completes the proof since $RI=\ind I$ and $IR=\ded I$.
\end{proof}

Recall that a Mal'tsev variety is one where there is a ternary term $p$ satisfying $p(x,y,y)=x$ and $p(x,x,y)=y$. This is in fact equivalent to every reflexive compatible relation being a congruence.

\begin{corollary}\label{CorB}
A Mal'tsev variety has $\ast$-inductive rank as well as $\ast$-deductive rank at most $1$, for any constant $\ast$.  The following three classes of sets are the same in such a variety: non-empty $\ast$-inductive sets, non-empty $\ast$-deductive sets, $\ast$-normal sets.
\end{corollary}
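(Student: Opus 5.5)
The corollary follows directly from Theorem~\ref{ThmMalcevRank} and Theorem~\ref{ThmA}; the plan is to check that every algebra of the variety satisfies the hypotheses of Theorem~\ref{ThmMalcevRank}, read off the rank bound, and then identify the classes of sets.

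\emph{Rank bound.} Let $V$ be a Mal'tsev variety with Mal'tsev term $p$, let $A\in V$, and let $\ast$ be the distinguished constant, interpreted in $A$. The identities for $p$ hold in $A$, so Theorem~\ref{ThmMalcevRank} applies to every subset $I\subseteq A$ with this choice of $\ast$. It gives that $\ind I$ is already the least $\ast$-inductive set containing $I$. Hence $\indn I1=\indn I\infty$, and therefore $\indn I1=\indn I2$. In the same way, $\ded I$ is the least $\ast$-deductive set containing $I$, so $\dedn I1=\dedn I2$. This holds uniformly for all non-empty $I$ in all algebras of $V$, so both ranks are at most $1$ by definition.

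\emph{Classes of sets.} Let $I$ be non-empty and $\ast$-inductive. Then $I=\ind I=\ded I$ by Theorem~\ref{ThmMalcevRank}, so $I$ is also $\ast$-deductive. Symmetrically, a non-empty $\ast$-deductive $I$ satisfies $I=\ded I=\ind I$, so it is $\ast$-inductive. The two classes of non-empty sets therefore coincide. Theorem~\ref{ThmA} then shows that a non-empty set which is both inductive and deductive is exactly a $\ast$-normal set. Finally, normal sets are non-empty because they contain $\ast$, so the three classes are equal.

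No step is a real obstacle. The only point to watch is that Theorem~\ref{ThmMalcevRank} allows an arbitrary element $\ast$, so it covers any constant of the signature, and that the rank definition quantifies only over non-empty $I$, which is consistent with the above.
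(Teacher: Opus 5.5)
Your proposal is correct and follows the paper's route: the paper gives no separate proof and reads the corollary off Theorem~\ref{ThmMalcevRank}, where one step of either process already yields the common least $\ast$-inductive and $\ast$-deductive closure, so both ranks are at most $1$ and the non-empty fixed sets coincide, and then identifies these sets with the $\ast$-normal ones via Theorem~\ref{ThmA}. Your remark that $\ast$-normal sets are non-empty because they contain $\ast$ correctly completes the identification of the three classes.
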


Note that a Mal'tsev variety with a constant $\ast$ is always $\ast$-subtractive: we can define subtraction by setting $s(x,y)=p(x,y,\ast)$. We can also define a J\'onsson--Tarski operation for $\ast$: setting $u(x,y)=p(x,\ast,y)$, we get $u(x,\ast)=x=u(\ast,x)$.
The corollary above in fact generalises to any variety having both of these operations.

\begin{theorem}\label{ThmStrongUnitalRank}
Consider a $0$-subtractive variety $V$ having a J\'onsson--Tarski term for $0$. Then $V$ has $0$-inductive rank as well as $0$-deductive rank at most $1$. The following three classes of sets are the same in such a variety: non-empty $0$-inductive sets, non-empty $0$-deductive sets, and $0$-normal sets.
\end{theorem}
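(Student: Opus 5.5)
The plan is to reduce everything to Theorem~\ref{ThmK} and Corollary~\ref{CorA}. The only new input is that the Jónsson--Tarski term lets a single step of induction or deduction already reach the generated $0$-clot $R\top$. The empty seed is trivial, so fix a non-empty $I$ and let $R=R_I$. By Theorem~\ref{ThmK}, the least $0$-inductive set containing $I$ is $RRI=R\top$, and the least $0$-deductive set containing $I$ is $IRR=R\top$. It therefore suffices to show $RI=R\top$ and $IR=R\top$. Since $RI=\ind I$ and $IR=\ded I$, this gives $\indn I1=\indn I2$ and $\dedn I1=\dedn I2$, hence both ranks are at most $1$.

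First, I will use the observation after Theorem~\ref{ThmC}: $0$-subtractivity makes $R$ symmetric. Consequently $IR=R^\circ I=RI$, so only the equality $RI=R\top$ needs to be proved.

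For the inclusion $RI\subseteq R\top$, I would use $RI\subseteq RRI=R\top$. The first containment holds because $R$ is reflexive; the second is Theorem~\ref{ThmK}.

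For the reverse inclusion, let $u$ be the Jónsson--Tarski term, so $u(x,0)=x=u(0,x)$. Take $x\in R\top$, so $(x,0)\in R$, and pick any $z\in I$. Then $(z,0)\in R$, and by symmetry $(0,z)\in R$. Since $R$ is compatible,
\[
u\bigl((x,0),(0,z)\bigr)=\bigl(u(x,0),u(0,z)\bigr)=(x,z)\in R,
\]
so $x\in RI$. This is the step I regard as the heart of the argument. It is exactly where the unital term replaces the second relational step used in Theorem~\ref{ThmK}, and it needs both symmetry (to obtain $(0,z)$) and non-emptiness of $I$ (to choose $z$).

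Finally, the identification of the three classes of sets follows directly from Corollary~\ref{CorA}, since $V$ is $0$-subtractive. The rank conclusion comes from the one-step equalities above, where $\ind I$ and $\ded I$ both equal the generated clot $R\top$. No real obstacle remains beyond checking that the symmetry of $R$ has been correctly imported from the subtractive setting.
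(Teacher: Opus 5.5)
Your proposal is correct and follows essentially the same route as the paper. Both proofs take $R^2I=R0=IR^2$ from Theorem~\ref{ThmK}, use the symmetry of $R$ together with the Jónsson--Tarski term applied to $(x,0),(0,z)$ to get $R0\subseteq RI$, and cite Corollary~\ref{CorA} for the three classes of sets. The only cosmetic difference is that you obtain $IR=RI$ directly from symmetry, whereas the paper applies $u$ a second time to $(a,0),(0,x)$.
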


\begin{proof}
Suppose $I$ is non-empty, and put $R=R_I$. By Theorem~\ref{ThmK} and its proof, $\mathcal I_0^2(I)=R^2I=R0=IR^2=\mathcal D_0^2(I)$. Let $u$ be a J\'onsson--Tarski term, so $u(x,0)=x=u(0,x)$. The relation $R$ is symmetric. If $xR0$ and $a\in I$, then $(x,0),(0,a)\in R$, and compatibility with $u$ gives $(x,a)\in R$. Thus $R0\subseteq RI$. Similarly, applying $u$ to $(a,0),(0,x)\in R$ gives $(a,x)\in R$, so $R0\subseteq IR$. Both generation processes are therefore idempotent.

The second part of the theorem follows from Corollary~\ref{CorA}.
\end{proof}

\subsection{Standard examples}\label{SubsecStandardExamples}

The preceding bounds give the familiar normal subsets in groups, rings and modules, together with their one-step generation.

\begin{theorem}\label{PropStandardRanks}
Every non-trivial Mal'tsev variety with a named constant has rank pair $(1,1)$. In particular, both processes generate, in one step, the normal subgroup generated by a non-empty subset of a group, the two-sided ideal generated by a non-empty subset of a ring, and the submodule generated by a non-empty subset of a module. Non-trivial varieties of these algebras have rank pair $(1,1)$.
\end{theorem}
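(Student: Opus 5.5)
The upper bound comes directly from Corollary~\ref{CorB}: in a Mal'tsev variety with a constant $\top$, both ranks are at most $1$. By Theorem~\ref{ThmMalcevRank}, for any non-empty $I$ we have $\ind I=\ded I=\indn I\infty=\dedn I\infty$. So the remaining work is to show that neither rank is $0$, and then to identify the one-step sets in the three standard examples.

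For the lower bound, rank $0$ would mean $\ind I=I$ for every non-empty $I$ in every algebra of $V$. In other words, every non-empty subset would be $\top$-inductive, and similarly for deductive. Since $V$ is non-trivial, it contains an algebra $A$ with at least two elements. I would take $I=\{\top\}$ when testing induction. This set is trivially closed, so instead I should pick a non-normal non-empty set. Take $I=\{a\}$ with $a\neq\top$. It is not deductive, because every non-empty deductive set contains $\top$; hence $\ded I\neq I$ and the deductive rank is nonzero. For induction, $R_I$ contains $(a,\top)$ and, being a congruence, also $(\top,a)$. Therefore $\top\in R_I I=\ind I$ while $\top\notin I$, so the inductive rank is also nonzero. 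Together with the upper bound, this gives the pair $(1,1)$.

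For the examples, I would note that groups (with $\top=e$), rings (with $\top=0$) and modules (with $\top=0$) form Mal'tsev varieties, via $p(x,y,z)=xy^{-1}z$ or $x-y+z$. By Corollary~\ref{CorB}, for non-empty $I$ the one-step set $\ind I=\ded I$ is the least $\top$-normal set containing $I$, i.e.\ the class of $\top$ in the congruence generated by $I\times\{\top\}$. Then I would translate this using the standard correspondence between congruences and normal subgroups, two-sided ideals, and submodules. The class of $\top$ is a normal subgroup, ideal or submodule containing $I$. Conversely, any such substructure containing $I$ is the $\top$-class of a congruence containing $I\times\{\top\}$. Hence the $\top$-class is the generated one.

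The last sentence of the statement requires non-triviality, which is exactly the hypothesis used above. The only step needing care is the lower bound for induction, where symmetry of $R_I$ is what puts $\top$ into $\ind I$; everything else is routine.
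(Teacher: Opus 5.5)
Your proposal is correct and follows the paper's proof. The upper bound comes from Theorem~\ref{ThmMalcevRank} and Corollary~\ref{CorB}, the lower bound from a singleton $\{a\}$ with $a\neq\top$, and the examples from identifying the one-step output with the normal closure, read as normal subgroups, ideals and submodules. The only cosmetic difference is that you obtain $\top\in\mathcal I_\top(\{a\})$ from symmetry of the congruence $R_{\{a\}}$, whereas the paper uses the explicit polynomial $g(z)=p(a,z,\top)$, which is exactly the witness for the pair $(\top,a)\in R_{\{a\}}$ that symmetry supplies.
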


\begin{proof}
Theorem~\ref{ThmMalcevRank} identifies each one-step output with the normal closure. Both ranks are at most one. If $a\ne0$, deduction from $\{a\}$ adjoins $0$. For a Mal'tsev term $p$, the polynomial $g(z)=p(a,z,0)$ satisfies $g(0)=a$ and $g(a)=0$, so induction also adjoins $0$. Thus both bounds are attained. Groups have the Mal'tsev term $xy^{-1}z$, while rings and modules have $x-y+z$. Their normal subsets are respectively normal subgroups, two-sided ideals and submodules.
\end{proof}

For a non-zero ring with identity the variety of unital modules is non-trivial, so its pair is $(1,1)$; for the zero ring both ranks are zero. Heyting algebras likewise have the Mal'tsev term
\[
p(x,y,z)=((x\to y)\to z)\wedge((z\to y)\to x).
\]
Consequently their non-trivial varieties have pair $(1,1)$, with either bound chosen as distinguished constant. With distinguished element $1$, both one-step processes produce the deductive filter generated by the seed, by Theorem~\ref{PropHeyting}.

We now make explicit the semiring case from which the two constructions arose. A semiring has a commutative additive monoid, an associative multiplication distributing over addition, and an absorbing zero; a multiplicative identity may also be included. Ideals are two-sided and contain zero.

\begin{theorem}\label{ThmD}
Let $I$ be a non-empty subset of a semiring $A$, and let $\overline I$ be its generated ideal. Then
\begin{equation}\label{EqSemiringOperators}
\mathcal I_0(I)=I+\overline I,
\qquad
\mathcal D_0(I)=\{x\in A\mid x+y\in I\text{ for some }y\in\overline I\}.
\end{equation}
Induction is idempotent. The $0$-inductive subsets containing $0$ are exactly the ideals, and an ideal is $0$-deductive if and only if it is subtractive. Moreover, deduction from an ideal gives its generated subtractive ideal in one step. The variety of semirings has rank pair $(1,\infty)$, as does each of the varieties obtained by requiring commutative multiplication, a multiplicative identity, or both.
\end{theorem}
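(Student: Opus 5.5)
We first compute the semicongruence $R=R_I$ explicitly. In a semiring, every polynomial $t(\vec x,\vec y)$ can be written as a sum of monomials. Each monomial either contains at least one variable from $\vec y$ or contains none. So $t(\vec x,\vec y)=c+e(\vec y)$, where $c=t(\vec x,\vec 0)$ collects the monomials free of $\vec y$ (using that $0$ is absorbing). The term $e(\vec y)$ is a sum of products each containing some $y_i\in I$, so $e(\vec y)\in\overline I$. Conversely, every element of $\overline I$ is such a sum $\sum p_iy_iq_i$ with $y_i\in I$ (with the obvious modifications if there is no identity). Hence
\[
R_I=\{(c+y,c)\mid c\in A,\ y\in\overline I\}.
\]
Since $I$ is non-empty, $0\in\overline I$ and the diagonal is included. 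The formulas \eqref{EqSemiringOperators} then follow immediately from $\mathcal I_0(I)=RI$ and $\mathcal D_0(I)=IR$: $RI=I+\overline I$, and $x\in IR$ iff $x+y\in I$ for some $y\in\overline I$.

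Next we treat the structural claims. If $J=I+\overline I$, then $J\subseteq\overline I$, so $\overline J=\overline I$. Hence $\mathcal I_0(J)=I+\overline I+\overline I=J$, and induction is idempotent. If $0\in I$, then $\mathcal I_0(I)=I$ iff $I+\overline I\subseteq I$ iff $\overline I\subseteq I$, i.e.\ $I$ is an ideal. If $I$ is an ideal, $\mathcal D_0(I)=\{x\mid x+y\in I,\ y\in I\}$, so fixedness is exactly subtractivity. For an ideal $I$, put $D=\mathcal D_0(I)$. We claim $D$ is the subtractive closure of $I$; by Theorem~\ref{ThmA}-style minimality, it suffices to show $D$ is a subtractive ideal. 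Indeed, $D$ is an ideal: sums satisfy $(x_1+x_2)+(y_1+y_2)\in I$, and products satisfy $ax+ay\in I$. For subtractivity, suppose $x+d\in D$ with $d\in D$. Then $x+d+y\in I$ and $d+y'\in I$ with $y,y'\in I$. Hence $x+(d+y'+y)\in I+\ldots$; more precisely, $x+(d+y')+y\in I$ with $(d+y')+y\in I$, so $x\in D$. Any subtractive ideal containing $I$ contains $D$, so $D$ is the least one.

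For the ranks: inductive rank is at most $1$ by idempotence, and exactly $1$ because $\{1\}$ or $\{a\}$ with $a\neq0$ gains $0$ (take $\mathbb N_0$). For deductive rank $\infty$, we need, for each $n$, a seed in some semiring of the relevant variety where $\mathcal D_0^n\neq\mathcal D_0^{n+1}$. This is the main obstacle. Since the paper refers to Theorem~\ref{ThmMonoids} for commutative monoids, I expect to reuse its seeds transported into semirings. Failing that, I would try directly in $\mathbb N_0$ (commutative, unital, so it lies in all four varieties), generalizing Remark~\ref{RemDeductionCaution}. Choose $I=\{N,N+1\}$ with large $N$. Then $\overline I$ is the numerical semigroup ideal generated by $N,N+1$. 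Deduction adds elements $x$ with $x+y\in I$, $y\in\overline I$, i.e.\ $x\in\{0,1\}$ or $x\in I$. The next step generates the ideal containing $1$, so this seed stops too quickly. Instead I would use $I=\{a_1,\dots,a_k\}$ with carefully spaced generators so that each deduction step lowers the gcd or the minimal element only slightly: each step can only subtract elements of the current generated ideal from elements of $I$. For example, take $I=\{m,\,2m-1,\,4m-3,\dots\}$-type sequences where new small differences appear one at a time, and verify by the explicit formula that the $n$th iterate misses some element reached at step $n+1$. Because each variety contains $\mathbb N_0$ (or $\mathbb N_0$ with commutative multiplication and $1$), one family of examples covers all four varieties simultaneously.
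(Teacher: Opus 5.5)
Your computation of $R_I$, the formulas in \eqref{EqSemiringOperators}, idempotence of induction, the identification of inductive sets containing $0$ with ideals, and the proof that deduction from an ideal gives its subtractive closure all agree with the paper's argument. There is one small slip. In $\mathbb N_0$ the seed $\{1\}$ does not acquire $0$ under induction, since $\mathcal I_0(\{1\})=1+\mathbb N_0$. What shows that $\{1\}$ is not inductive is that $2$ is added; it is deduction, not induction, that adjoins $0$.

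The genuine gap is the infinite deductive rank. You give only a plan for it, and the concrete route you sketch does not work. In $\mathbb N_0$, ideals are just additive submonoids, and large seed elements make the process collapse. Write $M=\max I$ and $J_k=\mathcal D_0^k(I)$. If the submonoid generated by $J_k$ contains $M-1$, then $1\in\mathcal D_0(J_k)$, and the step after that is already final. Your proposed family fails at once for this reason:
\begin{itemize}
\item if $m,2m-1\in I$, then $m-1\in\mathcal D_0(I)$, with witness $y=m$;
\item hence $1=m-(m-1)\in\mathcal D_0^2(I)$;
\item so $\mathcal D_0^3(I)=\mathcal D_0^4(I)$.
\end{itemize}
It is unclear whether $\mathbb N_0$ admits arbitrarily long deduction chains at all, and nothing in the proposal establishes it. You also do not need to stay inside $\mathbb N_0$: the rank is a property of the whole variety, which contains the free algebras.

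The missing idea is a semiring into which the commutative-monoid seed of Theorem~\ref{ThmMonoids} can be transported without the generated ideal contributing new relevant elements. The paper takes $\mathbb N_0[x_1,x_2,\ldots]$ with $I=\{x_j+x_{j+1}\mid j\geqslant0\}$ and $x_0=0$, and uses two facts.
\begin{itemize}
\item A summand of a homogeneous linear polynomial with non-negative coefficients is again homogeneous linear or zero.
\item For a set $J$ of such polynomials, the homogeneous degree-one elements of $\overline J$, together with $0$, are exactly the additive submonoid generated by $J$. In a product such as $p\,j\,q$ with $j\in J$, only the constant coefficients of the multipliers contribute in degree one, and non-negative coefficients rule out cancellation.
\end{itemize}
With these facts, the monoid argument runs verbatim and gives $\mathcal D_0^k(I)=I\cup\{0,x_1,\ldots,x_{k+1}\}$, which increases strictly at every step. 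This example is commutative and unital, so it lies in all four varieties; a named $1$ changes nothing, since it is available as a polynomial parameter anyway.
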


\begin{proof}
Expanding a polynomial using distributivity separates its monomials into those containing a variable from $I$ and those involving only parameters. Hence
\[
R_I=\{(x+y,x)\mid x\in A,\ y\in\overline I\}.
\]
Conversely, every element of $\overline I$ is a finite sum of products having at least one factor in $I$, so every displayed pair belongs to $R_I$. This proves~\eqref{EqSemiringOperators}.

Since $I\subseteq I+\overline I\subseteq\overline I$, the sets $I$ and $I+\overline I$ generate the same ideal. Thus
\[
\mathcal I_0^2(I)=I+\overline I+\overline I=I+\overline I.
\]
When $0\in I$, the first formula is $\mathcal I_0(I)=\overline I$, giving the assertion about inductive sets. The seed $\{1\}$ in $\mathbb N_0$ is not inductive, so the inductive rank is one in each stated variety.

If $I$ is an ideal, put $K=\{x\mid x+y\in I\text{ for some }y\in I\}$. Addition and multiplication on either side preserve $K$, by the corresponding properties of $I$. If $x+z,z\in K$, choose $a,b\in I$ with $z+a\in I$ and $x+z+b\in I$. Then
\[
c=z+a+b\in I,\qquad x+c=(x+z+b)+a\in I,
\]
so $x\in K$. Thus $K$ is subtractive. Every subtractive ideal containing $I$ contains $K$, proving minimality.

For deductive rank, take the commutative polynomial semiring
\[
A=\mathbb N_0[x_1,x_2,\ldots],\qquad
I=\{x_j+x_{j+1}\mid j\geqslant0\},\qquad x_0=0.
\]
We claim that, for every $k\geqslant1$,
\begin{equation}\label{EqSemiringInfiniteChain}
\mathcal D_0^k(I)=I\cup\{0,x_1,\ldots,x_{k+1}\}.
\end{equation}
All these polynomials are homogeneous linear polynomials or zero. If
$u+v$ is one of them, non-negativity of coefficients forces $u$ and
$v$ to have the same property. Moreover, for any set $J$ of such
polynomials, the homogeneous degree-one elements, together with zero,
of $\overline J$ are exactly the additive submonoid generated by $J$:
only constant coefficients of multipliers can contribute to a
degree-one polynomial.

Consequently, a residual $u$ with $u+v$ in the current set can only
be zero, an existing adjacent sum, or a single variable. Removing one
endpoint of an adjacent sum requires that endpoint to belong to the
generated ideal. A single variable belongs to this ideal precisely
when it already occurs individually in the current set, since a
non-negative combination of adjacent sums cannot isolate an endpoint.
Initially only $x_1$ occurs individually. The first step therefore
adjoins $0$ and $x_2$, and each subsequent step adjoins exactly the
next variable. This proves~\eqref{EqSemiringInfiniteChain}. The chain
is strictly increasing at every step, so deductive rank is infinite.
The same example works with or without a named multiplicative
identity: after forgetting that constant, it remains available as a
polynomial parameter. It also belongs to the varieties in which
multiplication is not required to be commutative.
\end{proof}

Together with Theorem~\ref{ThmA}, this recovers the characterisation of semiring kernels as subtractive ideals. The infinite deductive rank concerns arbitrary non-empty subsets, not just ideals: deduction is idempotent on ideals, while~\eqref{EqSemiringInfiniteChain} gives an arbitrarily long process even from a subset of a commutative semiring with identity. This is a clear distinction from the Mal'tsev and $0$-subtractive cases, where deduction from every non-empty subset stabilises after at most one and two steps, respectively (Theorems~\ref{ThmMalcevRank} and~\ref{ThmK}).

\section{The general rank spectrum theorem}\label{SecRanks}

We now determine which pairs of ranks are possible. All ranks in this section are taken at the named constant $0$; other named constants are allowed. Write $\operatorname{rk}(V)=(r_{\mathcal I}(V),r_{\mathcal D}(V))$ and $\overline{\mathbb N}_{>0}=\{1,2,\ldots,\infty\}$.

\begin{theorem}[General rank spectrum theorem]\label{ThmCompleteSpectrum}
The rank-pair spectrum of all varieties with a distinguished constant is
\[
\{(0,0),(0,1)\}\ \cup\
\bigl(\overline{\mathbb N}_{>0}\bigr)^2.
\]
Every positive finite pair is realized by a variety generated by a finite algebra in a finite signature.
\end{theorem}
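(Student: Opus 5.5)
The proof splits into an exclusion part and a realization part.

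\emph{Exclusion.} First we rule out pairs involving zero other than $(0,0)$ and $(0,1)$. If the $0$-deductive rank is $0$, then every non-empty subset of every algebra is deductive, hence contains $0$. Taking the singleton $\{a\}$ forces $a=0$, so the variety is trivial and its pair is $(0,0)$. If the inductive rank is $0$, every subset is inductive. For $a\neq0$ the subset $\{a\}$ is inductive. Combined with Lemma~\ref{PropUnary}(1), this means every unary polynomial $g$ with $g(0)=a$ satisfies $g(a)=a$; for $a=0$ the condition is vacuous on $\{0\}$. Testing $\{0,a\}$ and similar sets should then show that all operations act trivially, namely as projections or constants. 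I expect this to give the variety of sets with a distinguished constant (possibly with redundant operations), whose deductive rank is exactly $1$: $\{a\}$ is not deductive but adjoining $0$ suffices. Conversely, if the deductive rank is $0$ the variety is trivial, so the inductive rank is $0$ as well. In non-trivial varieties both ranks are therefore at least $1$ unless the inductive rank is $0$. The argument that inductive rank $0$ forces essentially unary-trivial operations is the delicate point: I would use polynomials $g(z)=f(\vec c, z,\dots)$ and show that any non-constant non-projection dependence yields a non-inductive singleton or pair.

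\emph{Realization.} Here I follow the plan announced in the Introduction.

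\begin{enumerate}
\item For $(r,r)$, take finite algebras with Hagemann--Mitschke terms for $(r+1)$-permutability. The upper bound follows as in Theorem~\ref{ThmC}, since $R^{r}$-zigzags collapse. The lower bound needs an explicit seed whose closure takes $r$ steps, using a chain algebra on $\{0,\dots,r\}$.
\item For $(a,1)$, take semilattices with mutually annihilating meet-endomorphisms.
\item For $(1,b)$, take bounded distributive lattices with endomorphisms and dual endomorphisms, realized via finite duality on height-graded posets, so that deduction at height $h$ waits on height $h-1$.
\item For the infinite cases, use meet-semilattices with top and a contractive endomorphism for $(\infty,1)$, and Theorem~\ref{ThmD} for $(1,\infty)$.
\end{enumerate}

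The general pair is obtained by an independent product. I would prove a product lemma: if $V_1$ is of Hagemann--Mitschke type, then $\operatorname{rk}(V_1\otimes V_2)$ is the componentwise maximum. The key is that algebras decompose as $A_1\times A_2$, seeds decompose as projections, and $R_I$ factors coordinatewise. Subsets, however, are not products, so I would work with the images and use the permutability of the first factor to recombine.

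Combining a Hagemann--Mitschke factor $(r,r)$ with $(a,1)$ or $(1,b)$ yields $(\max(r,a),r)$ and $(r,\max(r,b))$, which gives every finite positive pair. Products with $(\infty,1)$ and $(1,\infty)$ give the infinite ones, and the product of $(\infty,1)$ and $(1,\infty)$ gives $(\infty,\infty)$; for these the product lemma needs only the lower bound, which holds trivially via projections.

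I expect the main obstacle to be the lower bounds for $(1,b)$, namely exhibiting a seed that genuinely requires $b$ deduction steps, together with the product lemma for non-product subsets.
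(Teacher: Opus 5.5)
Your architecture is the paper's own, namely the roadmap from the Introduction: zero-rank exclusions, $\mathcal P_r$ for the diagonal, annihilating semilattice expansions for $(a,1)$, Cornish algebras for $(1,b)$, $\mathcal S_\infty$ and a $(1,\infty)$ variety for the infinite coordinates, and independent products with a permutable factor. But the step you flag as delicate is argued at the wrong level, and as written it fails.

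\textbf{Inductive rank zero.} You propose testing singletons and sets like $\{0,a\}$ inside an algebra and concluding that its operations are projections or constants. That implication is false for single algebras. Let $A=\{0,c,d\}$ carry one unary operation with $f(0)=f(c)=c$ and $f(d)=d$. Its unary polynomials are the identity, $f$ and constants. Moreover $f$ maps every set containing $c=f(0)$ into itself. So by Lemma~\ref{PropUnary} \emph{every} subset of $A$ is $0$-inductive, yet $f$ is neither a projection nor a constant. The failure only appears in $A^2$: the pair $\{(0,d),(c,c)\}$ is not inductive, since $f(0,0)=(c,c)$ while $f(0,d)=(c,d)$.

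The conclusion must therefore be drawn at the level of identities, and this is the missing idea. The paper works in $F_V(x_1,\ldots,x_n)$ with seed $I=\{x_1,\ldots,x_n,t(0,\ldots,0)\}$. Then $t(x_1,\ldots,x_n)\in\mathcal I_0(I)=I$ forces $V\models t=x_i$ or $V\models t=t(0,\ldots,0)$. Hence $R_I=\Delta_A\cup(I\times\{0\})$, and $\mathcal D_0(I)=I\cup\{0\}$ is already deductive, which gives the exclusion of $(0,d)$ for $d\geqslant2$.

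\textbf{Finite generation.} The second sentence of the theorem is never addressed, and it needs finite generators for the factors and for their products.
\begin{enumerate}
\item For $(a,1)$, the paper does not use $\mathcal Q_N$ itself but $\mathcal F_N=\operatorname{HSP}(L_N)$, where $L_N$ is the $(N+4)$-element chain. It exhibits the $(N+1)$-step witness inside $L_N^2$.
\item For $\mathcal P_m$, both lower bounds must be witnessed in the variety of one finite generator. The paper interprets the constant of the chain algebra on $\{0,\ldots,m\}$ once as $m$ (for the induction chain) and once as $1$ (for the deduction chain), and takes $B_m$ to be the product of the two pointed algebras.
\item One needs $\operatorname{HSP}(A)\boxtimes\operatorname{HSP}(B)=\operatorname{HSP}(A\boxtimes B)$, which the paper obtains from the independent-join theorem.
\end{enumerate}

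\textbf{The product lemma.} You leave this open, and it should be stated for a factor that is $(m+1)$-permutable \emph{and} has exact pair $(m,m)$. Hagemann's identity $R_Q^{\,m}=\operatorname{Cg}(Q\times\{0\})$ lets the partner $b_0$ of any first coordinate reach every point of $L$ by a padded path of length $k\geqslant m$. That is what yields $F^k(S)=F_A^k(P)\times L$. The same identity, combined with the \emph{lower} inclusions $R^mI\subseteq\mathcal I_0^m(I)$ and $IR^m\subseteq\mathcal D_0^m(I)$ of Theorem~\ref{ThmC}, is also what gives your upper bound for $\mathcal P_m$.

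Some such hypothesis is essential: pointed sets have pair $(0,1)$, but their independent square has pair $(1,1)$. Note too that $(\infty,r)$ and $(r,\infty)$ need the upper-bound half of this lemma for their finite coordinate; only $(\infty,\infty)$ can rely on lower bounds alone.
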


We prove the exclusions first. We then construct diagonal pairs with a permutability property which allows them to be combined with the two families of unequal ranks. The proof of Theorem~\ref{ThmCompleteSpectrum} is completed at the end of the section.

\subsection{The zero-rank exclusions}\label{SubsecRankZero}

\begin{theorem}\label{ThmRankZero}
A variety $V$ has inductive rank zero if and only if each of its terms is identically a projection or a constant term. It has deductive rank zero if and only if $V$ is trivial. Consequently, the only possible pairs involving zero are $(0,0)$ and $(0,1)$, and both occur.
\end{theorem}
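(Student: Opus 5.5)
The plan is to treat the two rank-zero conditions separately, working directly with the polynomial descriptions in Lemmas~\ref{LemA}, \ref{LemB} and~\ref{PropUnary}. Throughout, ``constant term'' means a term which $V$ proves equal to a term without variables.

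\textbf{Inductive rank zero.} Rank zero means that every non-empty $I$ in every $A\in V$ is $0$-inductive.

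For the easy direction, suppose every term is identically a projection or a constant. Then every polynomial of $A$ is a projection or a constant map. For a unary polynomial $g$ with $g(0)\in I$, either $g(a)=a\in I$ or $g(a)=g(0)\in I$. So Lemma~\ref{PropUnary} shows that $I$ is inductive.

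For the converse, suppose some term $t(x_1,\ldots,x_n)$ is neither a projection nor a constant in $V$.
\begin{enumerate}
\item Since $t$ is not constant, $V\not\models t(\vec x)=t(0,\ldots,0)$. Replacing the variables by $0$ one at a time, some single replacement must change the value in the free algebra. This gives an index $i$ and a unary polynomial $g(z)$ of the free algebra $F=F_V(x_1,\ldots,x_n,y)$ with $g(x_i)\neq g(0)$. Here $g$ is $t$ with $z$ in position $i$, some earlier coordinates already set to $0$, and the remaining $x_j$ as parameters.
\item Take the seed $I=\{y,g(0)\}$. Lemma~\ref{LemA} gives $g(y)\in\ind I$.
\item To show $g(y)\notin I$, use freeness, substituting $y\mapsto x_i$ in each case.
\begin{itemize}
\item If $g(y)=g(0)$, the substitution gives $g(x_i)=g(0)$, contradicting the choice of $i$.
\item If $g(y)=y$, then $t$ with some coordinates set to $0$ equals $x_i$. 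I expect the main obstacle here: this alone does not immediately make $t$ a projection.
\end{itemize}
\item To handle this case, I will choose $i$ as the \emph{first} index at which replacement changes the value. Then $t(\vec x)=g(x_i)$ holds, so $g(y)=y$ forces $V\models t=x_i$, contradicting the assumption that $t$ is not a projection.
\end{enumerate}
So $I$ is not inductive, and the inductive rank is positive. If $V$ is trivial, all terms are projections, which is consistent with this characterisation.

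\textbf{Deductive rank zero.} If $V$ is trivial, every non-empty set is the whole one-element algebra, so the rank is zero. If $V$ is non-trivial, pick $A\in V$ and $a\neq 0$ in $A$. Then $\{a\}$ is not deductive, because $0\in\ded{\{a\}}$ (non-empty deductive sets contain $\top$).

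\textbf{Consequences.} Deductive rank zero forces $V$ to be trivial, hence inductive rank zero, giving the pair $(0,0)$.

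If the inductive rank is zero and $V$ is non-trivial, every polynomial is a projection or a constant map. For non-empty $I$, Lemma~\ref{LemB} then gives
\[
\ded I=I\cup\{0\}.
\]
A projection yields $0$, and a constant $c$ with $c\in I$ yields $c$. This operator is idempotent, so the deductive rank is at most $1$, and it is exactly $1$ by the non-triviality argument above. So the only pairs involving zero are $(0,0)$ and $(0,1)$.

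For realisation, the trivial variety gives $(0,0)$. The variety of sets with a single constant $0$ has only projections and the constant as terms and is non-trivial, so it gives $(0,1)$.
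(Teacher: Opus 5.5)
Your proof is correct. The deductive part, the restriction to the pairs $(0,0)$ and $(0,1)$, and the realisations by the trivial variety and pointed sets all match the paper.

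The one genuine difference is the converse for inductive rank zero. The paper works in the free algebra on $x_1,\ldots,x_n$ with the multivariate seed $\{x_1,\ldots,x_n,t(0,\ldots,0)\}$. Lemma~\ref{LemA} places $t(x_1,\ldots,x_n)$ in its induction. Rank zero then says directly that $V\models t=x_i$ for some $i$ or $V\models t=t(0,\ldots,0)$. So the case you flagged as the main obstacle never arises.

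You instead reduce to a single unary polynomial and a fresh variable $y$. That is why you need the first changing index $i$, which guarantees $t(\vec x)=g(x_i)$ in $F_V(x_1,\ldots,x_n,y)$. With that choice, the substitution $y\mapsto x_i$ correctly excludes both $g(y)=g(0)$ and $g(y)=y$.

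The paper's route is shorter and avoids the index bookkeeping. Yours gives slightly more: a two-element seed and one unary polynomial already witness positive inductive rank, whereas the paper's witness seed has $n+1$ elements.
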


\begin{proof}
If every term is a projection or a constant term, then
\[
R_I=\Delta_A\cup(I\times\{0\}),\qquad
\mathcal I_0(I)=I,\qquad \mathcal D_0(I)=I\cup\{0\}
\]
for every non-empty $I\subseteq A$. Hence the inductive rank is zero and the deductive rank is at most one.

Conversely, suppose induction has rank zero. Given a term $t(x_1,\ldots,x_n)$, consider the free $V$-algebra on these variables and the subset
\[
I=\{x_1,\ldots,x_n,t(0,\ldots,0)\}.
\]
Then $t(x_1,\ldots,x_n)\in\mathcal I_0(I)=I$. Equality in the free algebra says that $V$ satisfies either $t(x_1,\ldots,x_n)=x_i$ for some $i$, or $t(x_1,\ldots,x_n)=t(0,\ldots,0)$. These are the asserted alternatives.

For every $a\in A$, the set $\mathcal D_0(\{a\})$ contains $0$. Thus deductive rank zero forces $a=0$ in every algebra, and the variety is trivial. Conversely, the trivial variety has pair $(0,0)$. Every non-trivial variety covered by the first paragraph has pair $(0,1)$; pointed sets give such a variety.
\end{proof}

\subsection{Permutability and diagonal ranks}\label{SubsecPermutableWitnesses}

Recall that a variety is \emph{congruence $n$-permutable}, where
$n\geqslant2$, if the two alternating composites of any two congruences,
with $n$ factors each, are equal. Equivalently, it has ternary
Hagemann--Mitschke terms $p_1,\ldots,p_{n-1}$ satisfying
\begin{equation}\label{EqHMIdentities}
\begin{gathered}
p_1(x,y,y)=x,\qquad p_{n-1}(x,x,y)=y,\\
p_i(x,x,y)=p_{i+1}(x,y,y)\qquad(1\leqslant i<n-1).
\end{gathered}
\end{equation}
See \cite{HM73}.

\begin{theorem}\label{ThmPermutableRank}
Let $\mathcal V$ be a congruence $n$-permutable variety with a named
constant $0$. For every non-empty subset $I$ of an algebra in
$\mathcal V$,
\[
\mathcal I_0^{\,n-1}(I)=\mathcal D_0^{\,n-1}(I)
=[0]_{\operatorname{Cg}(I\times\{0\})}.
\]
Consequently, both ranks of $\mathcal V$ are at most $n-1$.
\end{theorem}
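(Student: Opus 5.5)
The plan is to work with $R=R_I$ and the congruence $C=\operatorname{Cg}(I\times\{0\})$, and to squeeze both iterates between known bounds. Set $N=[0]_C$.

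\textbf{Outer inclusions.} Since $N$ is a $0$-normal set containing $I$ (indeed $I\subseteq N$ because $I\times\{0\}\subseteq C$), Theorem~\ref{ThmA} shows that $N$ is both inductive and deductive. Hence
\[
\mathcal I_0^{\,n-1}(I)\subseteq N\qquad\text{and}\qquad\mathcal D_0^{\,n-1}(I)\subseteq N.
\]

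\textbf{Induction.} By Theorem~\ref{ThmC}, $R^{n-1}I\subseteq\mathcal I_0^{\,n-1}(I)$, so it suffices to prove $N\subseteq R^{n-1}I$. By \eqref{EquF}, $C$ is the union of the alternating composites of $R$ and $R^\circ$. The key lemma is the standard fact that in an $n$-permutable variety every semicongruence $R$ satisfies $R^\circ\subseteq R^{n-1}$; consequently the transitive closure $\bigcup_k R^k$ is already a congruence and equals $R^{n-1}$. I would verify this from the Hagemann--Mitschke identities~\eqref{EqHMIdentities}. Given $aRb$, put $c_i=p_i(b,a,a)$. Then $c_0:=b=p_1(b,a,a)$ and $c_{n-1}=p_{n-1}(b,b,a)=a$ after re-indexing. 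Each consecutive pair
\[
p_i(b,b,a)=p_{i+1}(b,a,a)
\]
is related by applying $p_i$ to the pairs $(b,b)$, $(b,a)$ and $(a,a)$, each of which lies in $R$ or the diagonal. Fixing the orientation so that the chain goes from $b$ to $a$ through $R$ uses $n-1$ steps, giving $(b,a)\in R^{n-1}$. Similarly $R^k\subseteq R^{n-1}$ for all $k$: one shows $R^n\subseteq R^{n-1}$ by applying the terms coordinatewise to a chain of length $n$. So $C=R^{n-1}$ and $N=R^{n-1}0$.

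Finally $R^{n-1}0\subseteq R^{n-1}I$. Pick $a\in I$; then $0Ra$ holds since $R^\circ\subseteq R^{n-1}$ and $R$-chains compose. This needs care, because it gives $R^{n-1}0\subseteq R^{2n-2}I=R^{n-1}I$, which is fine precisely because $R^k\subseteq R^{n-1}$.

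\textbf{Deduction.} By Theorem~\ref{ThmC}, $IR^{n-1}\subseteq\mathcal D_0^{\,n-1}(I)$, and symmetrically
\[
N=0R^{n-1}\subseteq IR\,R^{n-1}\subseteq IR^{n-1},
\]
using $aR0$ for $a\in I$. This yields equality, and the rank bounds follow immediately.

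\textbf{Main obstacle.} The step I expect to be hardest is establishing $R^k\subseteq R^{n-1}$ for every $k$, not merely $R^\circ\subseteq R^{n-1}$. For this I would first try the known result that in an $n$-permutable variety the transitive closure of a reflexive compatible relation $R$ is contained in $R^{n-1}$ (Hagemann--Mitschke). Concretely, given $x_0Rx_1R\cdots Rx_n$, apply the $p_i$ to suitable columns of this chain to collapse one step. If that collapse proves awkward, the fallback is to cite \cite{HM73} directly for the equivalence between $n$-permutability and $R^{\circ}\subseteq R^{n-1}$ together with $R^{n}\subseteq R^{n-1}$ for semicongruences.
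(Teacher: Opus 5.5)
Your proposal is correct and follows essentially the paper's proof: obtain $R^\circ\subseteq R^{n-1}$ and $R^n\subseteq R^{n-1}$ (the paper simply cites Hagemann's theorem in the form of \cite[Theorem~1.1]{JRV14}), conclude $R^{n-1}=\operatorname{Cg}(I\times\{0\})$, and sandwich $[0]_C$ between $R^{n-1}I$ or $IR^{n-1}$ (via Theorem~\ref{ThmC}) and the iterates; your use of Theorem~\ref{ThmA} for the upper containment replaces the paper's direct remark that a set inside $N$ generates a relation inside $C$. Two small repairs: you need $0\mathrel{R^{n-1}}a$ rather than $0Ra$, and the step you flagged as the main obstacle is a one-liner, since applying $p_i$ to the pairs $(x_{i-1},x_i),(x_i,x_i),(x_i,x_{i+1})$ shows that $u_0=x_0$ and $u_i=p_i(x_i,x_i,x_{i+1})$ for $1\leqslant i\leqslant n-1$ form an $R$-chain of length $n-1$ ending at $u_{n-1}=x_n$.
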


\begin{proof}
Put $m=n-1$ and $R=R_I$. Hagemann's theorem, in the form of
\cite[Theorem~1.1]{JRV14}, gives
\[
R^\circ\subseteq R^m,\qquad R^{m+1}\subseteq R^m.
\]
Reflexivity implies $R^j=R^m$ for every $j\geqslant m$. Thus $C=R^m$
is transitive, and it is symmetric since
\[
C^\circ=(R^\circ)^m\subseteq R^{m^2}=C.
\]
It is also reflexive and compatible, and hence a congruence.
Every congruence containing $I\times\{0\}$ contains $R$ and its
powers, while $C$ contains those generating pairs. Therefore
\begin{equation}\label{EqHagemannStarPower}
R_I^{\,m}=\operatorname{Cg}(I\times\{0\}).
\end{equation}
Write $N=[0]_C$. Since $\varnothing\ne I\subseteq N$,
\[
R^mI=CI=N=IC=IR^m.
\]
Each fixed-$R$ step can be performed in the corresponding generation
process: the relation at every subsequent stage contains $R$.
Thus $N$ is contained in both $m$th iterates. Conversely, whenever
the current set lies in $N$, its generated relation lies in $C$ and
either one-step operator stays in $N$. Both processes consequently
remain inside $N$, proving the equality.
\end{proof}

We need finite witnesses attaining both bounds simultaneously. The
following explicit construction will also provide the factors used
in the product argument.

\begin{theorem}\label{ThmHMSharpRanks}
For every $m\geqslant1$, there is a finite algebra
$B_m$ such that
\[
\mathcal P_m=\operatorname{HSP}(B_m)
\]
is congruence $(m+1)$-permutable and has exact rank pair $(m,m)$.
Its positive-arity basic operations preserve its distinguished $0$.
\end{theorem}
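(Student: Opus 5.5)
The upper bounds will come for free, so the work is a construction plus a lower-bound computation. If $B_m$ satisfies the Hagemann--Mitschke identities \eqref{EqHMIdentities} with $n=m+1$, then $\mathcal P_m$ is congruence $(m+1)$-permutable, and Theorem~\ref{ThmPermutableRank} bounds both ranks by $m$. Exactness then only needs one non-empty seed $I$ in one algebra of $\mathcal P_m$ with $\mathcal I_0^{m-1}(I)\neq\mathcal I_0^{m}(I)$, and one with $\mathcal D_0^{m-1}(I)\neq\mathcal D_0^{m}(I)$. I will aim to find both seeds inside $B_m$ itself. For $m=1$ I take a two-element group with identity $0$ and cite Theorem~\ref{PropStandardRanks}. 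For general $m$ I will build $B_m$ on the chain $\{0,1,\ldots,m\}$. The basic operations will be the Hagemann--Mitschke operations $p_1,\ldots,p_m$, together with the constant $0$ and possibly one auxiliary unary operation. Each positive-arity operation will be chosen idempotent, so it fixes $0$, as the last sentence of the statement requires.

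The design principle is the classical one behind examples that are $(m+1)$-permutable but not $m$-permutable: the operations should ``move at most one unit along the chain per application''. Concretely, I will try to define each $p_i(x,y,z)$ as the obvious truncated interpolation between $x$ and $z$. It should return $x$ or $z$ when the identities force it, and otherwise move by at most one height. The goal is to establish, by a direct compatibility check, the key structural lemma. For $a>0$, the semicongruence generated by $\{(a,0)\}$, and more generally by $I\times\{0\}$ with $\max I=k$, should be the ``band'' relation $\{(x,y)\mid |x-y|\leqslant 1\}\cup\{(x,y)\mid \min(x,y)\leqslant k\text{-dependent threshold}\}$. This relation links only neighbouring heights above the current set. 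I must verify that this relation is closed under every $p_i$, which gives the upper containment $R_I\subseteq$ band. I must also verify that it is generated, which needs only that the neighbouring pairs are produced from $(a,0)$ by suitable polynomials.

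With the lemma in hand, the lower bound is a simple induction. Start from the seed $I=\{1\}$. Then $\mathcal I_0^{j}(I)$, and likewise $\mathcal D_0^{j}(I)$, should be exactly $\{0,1,\ldots,j+1\}\cap B_m$ (or $\{0,\ldots,j\}$, depending on the normalisation). Each step regenerates the relation from the current set, and by the lemma that relation reaches exactly one level higher. Hence the chains are strictly increasing for $m$ steps and stop at $B_m$, which matches $[0]_{\operatorname{Cg}(I\times\{0\})}$ from Theorem~\ref{ThmPermutableRank}. If the symmetric band does not make induction and deduction both grow slowly, I will break the symmetry. One option is a one-sided version for induction and its opposite for deduction. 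Another is to take $B_m$ as a product of two such chains, one handling each process, since both still satisfy the same Hagemann--Mitschke identities.

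The main obstacle is the structural lemma: choosing $p_i$ so that the identities hold and yet the generated semicongruences really are as thin as one level per step. The danger is that some composite polynomial, for example $p_i(a,0,x)$ with parameters, produces a long jump in one go. That would collapse the ranks, as the $2^n-1$ upper bound in Theorem~\ref{ThmC} warns. My first attempt is to make all operations monotone and ``$1$-Lipschitz'' in the chain metric. Then any polynomial applied to pairs at distance at most one yields pairs at distance at most one. This controls $R_I$ uniformly and makes the band relation automatically compatible.
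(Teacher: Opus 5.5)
There is a genuine gap. You never fix the operations, and the mechanism you propose for keeping $R_J$ thin cannot work.

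Your first attempt, making the operations monotone for the chain order, is impossible. If $x\leqslant y$ and every $p_i$ is order-preserving, then $x=p_1(x,y,y)\geqslant p_1(x,x,y)=p_2(x,y,y)\geqslant\cdots\geqslant p_m(x,x,y)=y$. So Hagemann--Mitschke operations admit no non-trivial compatible order.

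Your band lemma gives no per-step bound either. With the constant at the bottom, any relation containing $R_J$ contains $(\max J,0)$. For $\max J\geqslant2$ this forces the threshold clause with $t\geqslant0$. But then the relation contains $(x,0)$ and $(0,x)$ for every $x$. As soon as $0$ lies in the current set, the bounds $TJ$ and $JT$ are the whole chain. This happens after the first step by your own count, and always for deduction. The symmetric part alone is harmless, since $1$-Lipschitz operations preserve it. However, it only controls a step taken from a set within distance one of the constant.

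Finally, the count is off by one. The chain $\mathcal I_0^j(\{1\})=\{0,\ldots,j+1\}$ reaches $\{0,\ldots,m\}$ at $j=m-1$, which proves only rank $\geqslant m-1$. With a seed adjacent to the constant, the first step really can reach label $2$. For the max--min operations below, pointed at the bottom label, $q(t)=p_{m-1}(2,t,1)$ has $q(0)=1$ and $q(1)=2$.

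The missing idea is to use a \emph{one-sided} compatible relation, and to place the distinguished constant so that this relation contains every generating pair, uniformly in the current set. The paper takes
$p_i(x,y,z)=\max\{\min(x,m-i+\delta_{y,z}),\min(z,i-1+\delta_{x,y}),\min(x,y,z)\}$.
These operations satisfy \eqref{EqHMIdentities} and are idempotent. They are non-monotone because of the $\delta$ thresholds. They preserve both $Q=\{(a,b)\mid a\leqslant b+1\}$ and $Q^\circ$.

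Pointing the chain at its top label $m$ gives $\{0,\ldots,m\}\times\{m\}\subseteq Q$. Hence $R_J\subseteq Q$ for \emph{every} $J$, so each induction step raises the largest label by at most one. Pointing it at $1$ gives $\{0,\ldots,m\}\times\{1\}\subseteq Q^\circ$, with the same effect on deduction. From the seed $\{0\}$ the first step then adds exactly one label. The unary polynomials $p_{m-k}(m,t,0)$ and $p_{m-k}(m,t,1)$ realise the successive steps in the two cases, so both chains have $m$ strict steps. The two pointings are combined by taking $B_m$ to be the product of the two pointed copies, each of which is a quotient of $B_m$.

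Your fallback ideas, a one-sided relation and a product of two chains, point in this direction. Without this placement of the constant, however, they do not control the later stages. Your upper bound via Theorem~\ref{ThmPermutableRank} and your treatment of $m=1$ are fine.
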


\begin{proof}
On $X_m=\{0,1,\ldots,m\}$ define ternary operations
\begin{equation}\label{EqHMFiniteOperations}
p_i(x,y,z)=\max\bigl\{
\min(x,m-i+\delta_{y,z}),\
\min(z,i-1+\delta_{x,y}),\
\min(x,y,z)\bigr\},\qquad 1\leqslant i\leqslant m,
\end{equation}
where $\delta_{a,b}$ is $1$ when $a=b$ and $0$ otherwise. Minimum
and maximum describe these operation tables and are not additional
basic operations. For $0\leqslant j\leqslant m$, put
\[
h_j(x,z)=\max\{\min(x,m-j),\min(z,j),\min(x,z)\}.
\]
Direct substitution gives
\[
p_i(x,x,z)=h_i(x,z),\qquad p_i(x,z,z)=h_{i-1}(x,z),
\]
with $h_0(x,z)=x$ and $h_m(x,z)=z$. Thus the operations satisfy
\eqref{EqHMIdentities} for $n=m+1$. In particular they are idempotent.

The reflexive relation
\[
Q=\{(a,b)\in X_m^2\mid a\leqslant b+1\}
\]
is compatible with all $p_i$. Indeed, if each coordinate of a first
triple is at most the corresponding coordinate of a second triple
plus $1$, the same is true of the two thresholds involving $\delta$
in \eqref{EqHMFiniteOperations}. Taking minima and maxima preserves
this one-sided inequality. The converse relation $Q^\circ$ is
compatible as well.

The integers in $X_m$ are numerical labels; the distinguished constant
may be interpreted as any of them. Let $\mathbf A_m^{(t)}$ denote the
algebra with the displayed operations and with distinguished constant
interpreted as $t$.

First take $t=m$ and the numerical seed $\{0\}$. Since
$X_m\times\{m\}\subseteq Q$, we have $R_J\subseteq Q$ for every
subset $J$. Consequently induction can increase the largest label
present by at most one. For $0\leqslant k<m$, the unary polynomial
\[
u_k(t)=p_{m-k}(m,t,0)
\]
satisfies $u_k(0)=k+1$ and $u_k(m)=k$. It therefore inductively
adjoins $k+1$ whenever the labels $0$ and $k$ are present. Hence,
with the subscript $0$ here referring to the distinguished constant
of $\mathbf A_m^{(m)}$,
\begin{equation}\label{EqHMInductionChain}
\mathcal I_0^{\,k}(\{0\})=\{0,1,\ldots,k\}
\qquad(0\leqslant k\leqslant m).
\end{equation}

Next take $t=1$, again with numerical seed $\{0\}$. Now
$X_m\times\{1\}\subseteq Q^\circ$, so deduction can also increase
the largest label by at most one. Its first step adjoins the
distinguished constant, whose label is $1$. For $1\leqslant k<m$,
\[
v_k(t)=p_{m-k}(m,t,1)
\]
satisfies $v_k(0)=k$ and $v_k(1)=k+1$, giving the next deductive
step. Thus, in $\mathbf A_m^{(1)}$,
\begin{equation}\label{EqHMDeductionChain}
\mathcal D_0^{\,k}(\{0\})=\{0,1,\ldots,k\}
\qquad(0\leqslant k\leqslant m).
\end{equation}
For $m=1$, only the first step is needed.

Set
\[
B_m=\mathbf A_m^{(m)}\times\mathbf A_m^{(1)}.
\]
Its two factors are homomorphic images and supply the respective
lower bounds in $\operatorname{HSP}(B_m)$.
The identities \eqref{EqHMIdentities} hold in this variety, so
Theorem~\ref{ThmPermutableRank} supplies both upper bounds $m$.
This proves the exact pair. The generator has $(m+1)^2$ elements;
idempotence of its operations shows that they preserve its
distinguished constant $(m,1)$.
\end{proof}

\subsection{Independent products with a permutable factor}\label{SubsecIndependentProducts}

Let $\mathcal V$ and $\mathcal W$ have signatures renamed to be
disjoint apart from $0$. Expand each by interpreting the other's
positive-arity operations as first projections and its additional
constants as $0$. Add a binary operation $p$, interpreted as the
first projection in $\mathcal V$ and the second in $\mathcal W$.
In a product of the expanded algebras $A$ and $B$ we therefore have
\begin{equation}\label{EqIndependentProductOperations}
\begin{aligned}
p((a,b),(a',b'))&=(a,b'),\\
f((a_1,b_1),\ldots,(a_t,b_t))
 &=\bigl(f_A(a_1,\ldots,a_t),b_1\bigr),\quad f\in\mathcal V,\\
g((a_1,b_1),\ldots,(a_t,b_t))
 &=\bigl(a_1,g_B(b_1,\ldots,b_t)\bigr),\quad g\in\mathcal W.
\end{aligned}
\end{equation}
Its distinguished constant is $(0_A,0_B)$; an additional constant
$c$ of $\mathcal V$ is $(c_A,0_B)$, and similarly for $\mathcal W$.

The class of these products is denoted by
$\mathcal V\boxtimes\mathcal W$. This is an independent join in
the sense of \cite[Theorem~1]{GLP69}. The selector also proves
directly that it is a variety: every subalgebra is the product of
its projections, and the same holds for every congruence, viewed
as a subalgebra of $A^2\times B^2$. The latter projections are
congruences, so quotients retain the product form; arbitrary
products do so by regrouping their factors. Conceptually this
presents the product category $\mathcal V\times\mathcal W$ in one
sort, with underlying sets having the form $A\times B$ where $A\in \mathcal V$ and $B\in \mathcal W$.

Taking the other factor to be trivial includes either expanded
component variety. The added projections and duplicated constants
do not change its polynomial operations. Thus each product rank
is at least the corresponding rank of either factor. Moreover,
if $\mathcal V=\operatorname{HSP}(A)$ and
$\mathcal W=\operatorname{HSP}(B)$, then
\begin{equation}\label{EqIndependentProductGenerator}
\mathcal V\boxtimes\mathcal W
=\operatorname{HSP}(A\boxtimes B).
\end{equation}
Indeed, let $\widehat A$ and $\widehat B$ be the expansions of $A$ and
$B$ to the common signature just described. The two projections from
$A\boxtimes B$ are surjective homomorphisms onto $\widehat A$ and
$\widehat B$. Therefore $\operatorname{HSP}(A\boxtimes B)$ contains
the varieties generated by both expansions, and hence their join.
By \cite[Theorem~1]{GLP69}, this independent join is precisely
$\mathcal V\boxtimes\mathcal W$. Conversely, the latter is a variety
containing $A\boxtimes B$, so it contains its generated variety.
This proves~\eqref{EqIndependentProductGenerator}. In particular, the
product of two finite generators is a finite generator for the
independent product, and finite signatures remain finite. 

\begin{theorem}\label{ThmPermutableProductRanks}
If $\mathcal V$ has rank pair $(i,d)$ and $m\geqslant1$, then
\[
\bigl(r_{\mathcal I}(\mathcal V\boxtimes\mathcal P_m),
r_{\mathcal D}(\mathcal V\boxtimes\mathcal P_m)\bigr)
=\bigl(\max\{i,m\},\max\{d,m\}\bigr).
\]
Here $i$ and $d$ may be infinite, and $\mathcal P_m=\operatorname{HSP}(B_m)$
is the congruence $(m+1)$-permutable variety of exact pair $(m,m)$
constructed in Theorem~\ref{ThmHMSharpRanks}.
More generally, if $\mathcal W$
is congruence $(m+1)$-permutable, $\varnothing\ne S\subseteq A\times B$,
$P=\pi_A S$, $Q=\pi_B S$ and
\[
L=[0_B]_{\operatorname{Cg}_B(Q\times\{0_B\})},
\]
then, for either one-step operator $F=\mathcal I_0$ or
$F=\mathcal D_0$,
\begin{equation}\label{EqProductRectangularization}
F^k(S)=F_A^k(P)\times L\qquad(k\geqslant m).
\end{equation}
\end{theorem}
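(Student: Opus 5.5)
The plan is to reduce everything to a statement about the generated semicongruence on a product $A\boxtimes B$, using the selector $p$ to split it into a product of relations.

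\textbf{Step 1 (the relation is a product).} By \eqref{EqIndependentProductOperations}, every term of $\mathcal V\boxtimes\mathcal W$ computes its first coordinate as a $\mathcal V$-term of the first coordinates, and its second coordinate as a $\mathcal W$-term of the second coordinates. Hence a polynomial of $A\boxtimes B$ is a pair of polynomials acting coordinatewise. For $\varnothing\ne T\subseteq A\times B$, I would prove
\[
R_T=R_{\pi_AT}\times R_{\pi_BT}.
\]
For $\supseteq$: from $(s,0)\in R_T$ with $s=(a,b)$, applying $p$ to $(s,0)$ and the diagonal pair $(0,0)$ gives $((a,0),(0,0))\in R_T$. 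Applying $p$ to $(0,0)$ and $(s,0)$ gives $((0,b),(0,0))\in R_T$. Applying the $\mathcal V$-operations (second coordinate carried along by first projections), and symmetrically the $\mathcal W$-operations, yields $R_{\pi_AT}\times\Delta$ and $\Delta\times R_{\pi_BT}$. A final application of $p$ yields their product.
For $\subseteq$: the product relation is a semicongruence containing $T\times\{0\}$.
Consequently $\pi_AF(T)=F_A(\pi_AT)$ and $\pi_BF(T)=F_B(\pi_BT)$ for both operators. By induction on $k$,
\[
\pi_AF^k(S)=F_A^k(P),\qquad \pi_BF^k(S)=F_B^k(Q).
\]

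\textbf{Step 2 (rectangularization).} By Theorem~\ref{ThmPermutableRank}, $F_B^k(Q)=L$ for $k\geqslant m$, so Step 1 gives the inclusion $F^k(S)\subseteq F_A^k(P)\times L$. For the reverse inclusion, $L$ is a congruence class, so $R_{Q'}\subseteq\operatorname{Cg}(Q\times\{0\})$ for every $Q'\subseteq L$ containing $Q$. Moreover, once the stage-$j$ relation satisfies $R_{Q_j}^{\,m-j}\supseteq\ldots$, the $B$-coordinate of any point can be moved anywhere in $L$ while freezing the $A$-coordinate, using the factor $\Delta\times R_{Q_j}$. Concretely, I would prove by induction on $j$ the following statement. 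For every $x\in F_A^j(P)$ there is a point $(x,y)\in F^j(S)$. In addition, the fibre over $x$ contains the whole $j$-th iterate $F_B^j$ of the fibre's own seed, in the sense that
\[
\{x\}\times F_B^{j}(Q_x)\subseteq F^j(S)
\]
for a suitable nonempty $Q_x\subseteq Q$ (at least one point of $Q$ sits over some preimage). Since every nonempty seed inside $L$ reaches $L$ in $m$ steps by Theorem~\ref{ThmPermutableRank}, taking $j=k\geqslant m$ gives fibres equal to $L$, which is \eqref{EqProductRectangularization}.

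\textbf{Main obstacle.} Making the fibre bookkeeping in Step 2 honest is the delicate point. The one-step witness for the $A$-coordinate (some $s\in T$ with $xR_{\pi_AT}s_A$) and the witness for the $B$-coordinate may be different elements of $T$, so $F(T)$ need not be rectangular. I would first try to exploit that a pair $(x,y)$ lies in $F(T)$ as soon as $x$ is related to $s_A$ and $y$ to $s_B$ for the \emph{same} $s$. The plan is to transport an entire fibre $\{s_A\}\times Y$ along $x\,R\,s_A$ via $R\times\Delta$, while simultaneously growing $Y$ by $\Delta\times R$. This keeps each fibre an iterate of $F_B$ from a nonempty seed, losing no steps. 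Care is needed with the $A$-side when $m$ is larger than the $A$-rank, but fibres only need $m$ steps, and these are available for $k\geqslant m$.

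\textbf{Step 3 (ranks).} From \eqref{EqProductRectangularization}, $F^k(S)$ stabilizes at stage $k\geqslant m$ exactly when $F_A^k(P)$ does. This gives the upper bounds $\max\{i,m\}$ and $\max\{d,m\}$. The lower bounds come from the remark that taking either factor trivial embeds the (expanded) component variety, so each product rank is at least the corresponding rank of $\mathcal V$ and of $\mathcal P_m$, which is $m$ by Theorem~\ref{ThmHMSharpRanks}. Infinite $i$ or $d$ is handled by the same comparison.
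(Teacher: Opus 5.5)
\textbf{Verdict: genuine gap in Step 2.} Steps 1 and 3 are sound and match the paper: the product formula \eqref{EqProductStarRelations}, the projection identities, the inclusion $F^k(S)\subseteq F_A^k(P)\times L$, and the lower bounds from the component varieties.

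\textbf{Where Step 2 fails.} Your invariant $\{x\}\times F_B^{j}(Q_x)\subseteq F^j(S)$ is true. However, since $S$ is not rectangular, the induction can only guarantee it for small seeds, such as a single point $b_0\in Q$ lying over the start of an anchor chain; at $j=0$ one cannot take $Q_x=Q$. The conclusion you draw from it is false. Theorem~\ref{ThmPermutableRank} says that a non-empty seed $Q'$ reaches $[0_B]_{\operatorname{Cg}_B(Q'\times\{0_B\})}$ in $m$ steps. For $Q'\subsetneq Q$ this class can be strictly smaller than $L$; take $Q'=\{0_B\}$, for example. Here is a concrete case. Let $\mathcal W$ be groups ($m=1$), $B=C_2^2$, $Q=\{(1,0),(0,1)\}$, and $S=\{(a_1,(1,0)),(a_2,(0,1))\}$ with $a_1\neq a_2$. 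The fibre seed over $a_1$ only yields $F_B(\{(1,0)\})=\{0,(1,0)\}$. Yet the actual fibre of $F(S)$ over $a_1$ is all of $L=B$.

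\textbf{The missing idea.} Fibres do not grow by their own iterates. By \eqref{EqProductStarRelations}, the $B$-factor of every stage relation $R_{F^{j}(S)}$ is $R_{F_B^{j}(Q)}\supseteq R_Q$. This relation is generated by the whole projection $Q$ and is available in every fibre. The paper combines this with \eqref{EqHagemannStarPower}, namely $R_Q^m=\operatorname{Cg}_B(Q\times\{0_B\})$. Consequently any point of $Q$ is joined to any point of $L$ by an $R_Q$-path of length $m$ in either direction. Reflexivity pads such a path to any length $k\geqslant m$. The argument then runs as follows:
\begin{enumerate}
\item Given $a\in F_A^k(P)$, follow its anchors back to $a_0\in P$ along a chain $a_0,\ldots,a_k=a$.
\item Choose $b_0$ with $(a_0,b_0)\in S$.
\item For $b\in L$, walk a padded $R_Q$-path from $b_0$ to $b$ in step with the $A$-chain, in the direction appropriate to $F$.
\item Each combined step is a pair in the current $R_{F^{j-1}(S)}$, so $(a,b)\in F^k(S)$.
\end{enumerate}
Your ``transport and grow'' idea becomes exactly this argument once two changes are made. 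The growth must use $R_Q$ rather than $R_{F_B^{j}(Q_x)}$. The target must be the $\operatorname{Cg}_B(Q\times\{0_B\})$-class of $b_0$, which is $L$, rather than $F_B^k(Q_x)$.
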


\begin{proof}
For every non-empty subset $J\subseteq A\times B$, the selector,
applied coordinatewise to pairs, makes $R_J$ a product of its two
coordinate projections. Projection commutes with generation of a
subalgebra, so these projections are precisely the generated star
relations in the factors. Consequently
\begin{equation}\label{EqProductStarRelations}
R_J=R_{\pi_AJ}\times R_{\pi_BJ}.
\end{equation}
For one application of $F$, projecting therefore gives inclusions
into the corresponding factor stages. Conversely, lift an anchor
in one projection to a member of $J$ and keep the other coordinate
unchanged, using reflexivity. Iterating gives
\[
\pi_A F^j(S)=F_A^j(P),\qquad
\pi_B F^j(S)=F_B^j(Q).
\]

By \eqref{EqHagemannStarPower},
\[
R_Q^m=\operatorname{Cg}_B(Q\times\{0_B\}).
\]
Any point of $Q$ can thus be joined to any point of $L$ by a
length-$m$ $R_Q$-path in either direction. Reflexivity pads such
a path to any length $k\geqslant m$.

Take $a\in F_A^k(P)$ and follow its successive anchors backwards
to obtain $a_0,\ldots,a_k=a$, with $a_0\in P$ and step $j$ using
$R_{F_A^{j-1}(P)}$ in the direction appropriate to $F$. Choose
$b_0$ with $(a_0,b_0)\in S$. For any $b\in L$, choose a padded
length-$k$ $R_Q$-path from $b_0$ to $b$ in the same direction.
At each stage $R_Q$ is contained in the relation generated by the
current second projection. Formula~\eqref{EqProductStarRelations}
therefore permits the two paths to be followed simultaneously
through the actual successive subsets $F^j(S)$. Their endpoint
is $(a,b)$, proving the inclusion from right to left in
\eqref{EqProductRectangularization}. The converse follows from
the projection identities and Theorem~\ref{ThmPermutableRank},
which gives $F_B^k(Q)=L$ for $k\geqslant m$.

For a finite inductive rank $i$ of $\mathcal V$, apply
\eqref{EqProductRectangularization} at $k=\max\{i,m\}$ and at
$k+1$ to obtain the corresponding upper bound for the product.
The deductive argument is identical; an infinite upper bound
requires no argument. For $\mathcal W=\mathcal P_m$, the two
component varieties and Theorem~\ref{ThmHMSharpRanks} give
matching lower bounds in both coordinates.
\end{proof}

\subsection{Prescribing inductive rank with deductive rank one}
\label{SubsecAnnihilatingSpectrum}

In this subsection the distinguished element $0$ is the \emph{greatest}
element of a meet-semilattice. This convention makes deduction especially
simple.

\begin{lemma}\label{LemMeetDeductiveClosure}
In an expansion of a meet-semilattice with greatest element $0$ by
order-preserving operations, $\mathcal D_0(I)=\mathord\uparrow I$ for every
non-empty subset $I$.
\end{lemma}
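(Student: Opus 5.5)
We show $\mathcal D_0(I)=\mathord\uparrow I$ by proving the two inclusions separately, using the polynomial description of Lemma~\ref{LemB}.

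\emph{Inclusion $\mathord\uparrow I\subseteq\mathcal D_0(I)$.} Let $a\in I$ and $a\leqslant b$. Consider the unary polynomial $g(z)=z\wedge b$, which is built from the basic meet and the parameter $b$. Then $g(a)=a\wedge b=a\in I$. Since $0$ is the greatest element, $g(0)=0\wedge b=b$. By Lemma~\ref{LemB}, with one variable from $I$, this gives $b\in\mathcal D_0(I)$.

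\emph{Inclusion $\mathcal D_0(I)\subseteq\mathord\uparrow I$.} Take an element $t(\vec x,\vec 0)$ with $\vec y\in I$ and $t(\vec x,\vec y)\in I$. Every basic operation is order-preserving: the meet is, and so are the added operations by hypothesis. Constants are trivially order-preserving. By induction on the term $t$, every polynomial is therefore order-preserving in each argument.

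Since $0$ is greatest, each entry of $\vec y$ satisfies $y_j\leqslant 0$. Monotonicity then gives
\[
t(\vec x,\vec y)\leqslant t(\vec x,\vec 0).
\]
So $t(\vec x,\vec 0)$ lies above the element $t(\vec x,\vec y)$ of $I$, hence belongs to $\mathord\uparrow I$.

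Non-emptiness of $I$ is not needed for either inclusion, but it matches the statement. The only point needing care is the observation that order-preserving basic operations yield order-preserving polynomial maps, which is a routine structural induction on terms. I expect no substantive obstacle.
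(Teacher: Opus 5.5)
Your proof is correct and follows essentially the same route as the paper: every polynomial is order-preserving, so $q(\vec a)\leqslant q(\vec 0)$ because $0$ is greatest, and the polynomial $z\wedge b$ witnesses the reverse inclusion. Your remark that non-emptiness is not needed is also right, since both $\mathcal D_0(\varnothing)$ and $\mathord\uparrow\varnothing$ are empty.
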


\begin{proof}
Every polynomial is order-preserving, so
$q(\vec a)\leqslant q(\vec0)$. A deductive step therefore moves upwards
from an element of $I$. Conversely, if $a\in I$ and $a\leqslant b$, the
polynomial $q(x)=x\wedge b$ has $q(a)=a$ and $q(0)=b$.
\end{proof}

For $N\geqslant1$, let $\mathcal Q_N$ be the variety of meet-semilattices
with greatest element $0$ and unary operations $f_1,\ldots,f_N$ satisfying
\begin{equation}\label{EqAnnihilatingSpectrumAxioms}
\begin{gathered}
f_i(x\wedge y)=f_i(x)\wedge f_i(y),\qquad
f_i(x)\wedge x=f_i(x),\\
f_i(f_j(x))=f_1(f_1(0))\qquad(1\leqslant i,j\leqslant N).
\end{gathered}
\end{equation}
Write $c=f_1(f_1(0))$ and $c_i=f_i(0)$. Contractivity gives
$c=f_i(f_j(x))\leqslant x$, so $c$ is least and $f_i(c)=c$.
Thus no additional basic constant is required. Let $\mathcal Q_0$ be the
variety of meet-semilattices with greatest element $0$.

\begin{theorem}\label{ThmAnnihilatingSpectrum}
For every $N\geqslant0$, the rank pair of $\mathcal Q_N$ is $(N+1,1)$.
This same pair is attained by a variety generated by one finite algebra:
by a two-element chain when $N=0$, and by a chain $L_N\in\mathcal Q_N$
with $N+4$ elements when $N\geqslant1$.
\end{theorem}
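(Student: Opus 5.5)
The plan is to treat the two ranks separately: deduction is immediate from Lemma~\ref{LemMeetDeductiveClosure}, induction needs an upper bound by a counting argument and a lower bound by an explicit chain. Finally we check that the finite chains generate varieties with the same pair.

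\emph{Deductive rank.} The axioms \eqref{EqAnnihilatingSpectrumAxioms} make every $f_i$ order-preserving, so Lemma~\ref{LemMeetDeductiveClosure} applies and gives $\mathcal D_0(I)=\mathord\uparrow I$. This set is already up-closed, so $\mathcal D_0^2(I)=\mathord\uparrow\mathord\uparrow I=\mathord\uparrow I$. Hence the deductive rank is at most $1$. It equals $1$ in every non-trivial member, since the seed $\{a\}$ with $a\neq0$ adjoins $0$ (Theorem~\ref{ThmRankZero}). The same argument covers $\mathcal Q_0$ and the finite generators.

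\emph{Normal form for polynomials.} Using the axioms, every term reduces to a meet of the following pieces:
\begin{itemize}
\item variables;
\item single applications $f_i(x)$, because $f_i$ distributes over meets and any composite $f_if_j$ collapses to the constant $c$;
\item the constants $c_i=f_i(0)$ and $c$.
\end{itemize}
So a polynomial has the form
\[
q(\vec z)=b\wedge\bigwedge_{k\in T}z_k\wedge\bigwedge_{(i,k)\in U}f_i(z_k),
\]
with a parameter $b$. By Lemma~\ref{LemA}, one inductive step adjoins $q(\vec a)$ with $\vec a\in I$ whenever $q(\vec 0)=b\wedge\bigwedge_{(i,k)\in U}c_i$ lies in $I$.

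\emph{Upper bound $N+1$ for induction.} For a stage $J$, let $\sigma(J)$ be the set of indices $i$ for which $J$ contains an element that permits $f_i$ to be used. In practice this means an element below $c_i$ that is witnessed appropriately, and pinning down exactly this notion of ``permitted'' is part of the work. The argument has two parts.
\begin{enumerate}
\item Show that a single step applied to $J$ yields exactly all meets $b\wedge(\text{meets of elements of }J)\wedge\bigwedge f_i(\cdot)$ with $i\in\sigma(J)$ and with $b\wedge\bigwedge c_i$ in $J$.
\item Show that once $\sigma$ stops growing, one further step is idempotent. For this, use $f_if_j=c$ and $c\leqslant$ everything: applying an $f$ twice gives nothing new beyond $c$, and meets of meets are meets.
\end{enumerate}
The mutual annihilation axiom is what prevents long chains of $f$-applications from creating new elements. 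Since $\sigma$ can grow at most $N$ times and there is one initial meet-closing step, this gives $\mathcal I_0^{N+1}=\mathcal I_0^{N+2}$. I expect this bookkeeping to be the main obstacle: one must show that a new index becomes usable only through the previous step, not earlier. I would organise it by proving by induction on $k$ an explicit description of $\mathcal I_0^k(I)$ in terms of the usable index set at stage $k-1$.

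\emph{Lower bound and the finite chain.} For $N\geqslant1$, construct a chain $L_N$ with $N+4$ elements, say
\[
0>e>d_1>\cdots>d_N>\ldots>c,
\]
where each $f_i$ is contractive and meet-preserving (order-preserving on a chain suffices). Choose the $f_i$ so that:
\begin{itemize}
\item $f_if_j=c$ for all $i,j$;
\item $c_{i+1}$ can only be reached through a unary polynomial whose value at $0$ is $c_i$, so $d_{i+1}$ enters one step after $d_i$.
\end{itemize}
Starting from a seed such as $\{e\}$, we then obtain $\mathcal I_0^k(\{e\})\ne\mathcal I_0^{k+1}(\{e\})$ for $k\leqslant N$. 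This is verified by listing the unary polynomials $b\wedge z\wedge\bigwedge_{S}f_i(z)$ on the chain. For $N=0$, the two-element chain with seed $\{\text{bottom}\}$ has rank pair $(1,1)$. Finally, since $L_N\in\mathcal Q_N$, the variety $\operatorname{HSP}(L_N)$ inherits the upper bounds from $\mathcal Q_N$ and contains the witnesses for the lower bounds, so its pair is also $(N+1,1)$.
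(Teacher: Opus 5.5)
Your treatment of deduction, the polynomial normal form and the final transfer to $\operatorname{HSP}(L_N)$ are fine. However, both halves of the inductive argument have genuine gaps.

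\textbf{Upper bound.} Read $\sigma(J)$ naturally, as the set of indices $i$ for which some element of $J$ lies below $c_i$. Then your step (2), ``once $\sigma$ stops growing, one further step is idempotent'', is false. A single step needs \emph{one} anchor lying below every $c_i$ it uses, and such joint anchors may appear only after an extra meet. Here is a counterexample with $N=2$.
\begin{itemize}
\item Let $L_2$ be the chain $c<d<c_2<c_1<a<0$ with $f_i(0)=c_i$, $f_1(a)=c_2$, $f_2(a)=d$, and $f_i(x)=c$ for $x\leqslant c_1$. Let $L_2'$ be the same algebra with $f_1$ and $f_2$ interchanged.
\item In $L_2\times L_2'$ put $C_1=f_1(0)=(c_1,c_2)$, $C_2=f_2(0)=(c_2,c_1)$, $u=(a,a)$, and $I=\{C_1,C_2,u\}$.
\item Then $\sigma(I)=\sigma(\mathcal I_0(I))=\{1,2\}$, so your claim predicts $\mathcal I_0^2(I)=\mathcal I_0(I)$.
\item As an anchor, $C_1$ admits only $f_1$, $C_2$ admits only $f_2$, and $u$ admits neither. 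Hence every element of $\mathcal I_0(I)$ is $(c,c)$ or has a coordinate $\geqslant c_2$.
\item Yet $(d,d)=(C_1\wedge C_2)\wedge f_1(u)\wedge f_2(u)$ lies in $\mathcal I_0^2(I)$.
\end{itemize}

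The missing idea is to stop tracking the actual iterates and sandwich instead. The non-empty finite-meet closure $S$ of $I$ satisfies $I\subseteq S\subseteq\mathcal I_0(I)$. From a meet-closed $S$, all iterates lie in $S\cup K$, where $K=\{x\mid f_i(x)=c\text{ for all }i\}$ is a downset containing every $f_j(A)$. Consequently:
\begin{itemize}
\item any $f$-input contributing a value other than $c$ lies in $S$;
\item closure elements other than $c$ are words $b\wedge f_{i_1}(a_1)\wedge\cdots\wedge f_{i_\ell}(a_\ell)$ with $b,a_t\in S$, each prefix lying below the next $c_{i_t}$;
\item merging repeated indices, using meet-preservation and meet-closure of $S$, leaves at most $N$ factors.
\end{itemize}
These words are built in $N$ steps from $S$, hence in $N+1$ steps from $I$.

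\textbf{Lower bound.} Your plan cannot work inside a chain. Every non-empty subset of a chain is meet-closed, so by the analysis above induction from any seed in a chain of $\mathcal Q_N$ stabilizes within $N$ steps. Singleton seeds are worse still: $e\leqslant c_i$ forces $f_i(e)\leqslant f_i(c_i)=c$, so $\mathcal I_0(\{e\})\subseteq\{e,c\}$ is already inductive. Likewise, for $N=0$ every subset of the two-element chain is $0$-inductive, so $\{\text{bottom}\}$ witnesses nothing. There the lower bound $1$ must come from Theorem~\ref{ThmRankZero} or from the square.

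The extra step has to be forced in a non-chain member of $\operatorname{HSP}(L_N)$, where the first anchor arises only as a meet of incomparable seed elements. The paper uses $L_N^2$ with seed $\{(c_1,0),(0,c_1),(a,a)\}$:
\begin{itemize}
\item step one yields $(c_1,c_1)$;
\item step $i+1$ yields $(c_{i+1},c_{i+1})=(c_i,c_i)\wedge f_i(a,a)$;
\item shortcuts are excluded by showing that points of the $k$th stage with both coordinates above $c$ have both coordinates at least $c_k$.
\end{itemize}
That exclusion must control all multivariate polynomials. Listing unary ones only bounds the stages from below.
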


\begin{proof}
Write $T=\mathcal I_0$. Lemma~\ref{LemMeetDeductiveClosure} gives
deductive rank at most $1$, and Theorem~\ref{ThmRankZero} makes it exactly
$1$ in every non-trivial subvariety. For $N=0$, $T(I)$ is the closure of
$I$ under non-empty finite meets. Its rank is at most $1$; in the square
of a two-element chain, two incomparable elements acquire their meet in
one step. This also proves the finite-generator assertion for $N=0$.
Suppose henceforth that $N\geqslant1$.

Distributing the $f_i$ over meets and eliminating their composites puts
every non-constant polynomial into a meet of a parameter, variables and
factors $f_i(x_j)$. Consequently a one-step value from a non-empty $J$
has the form
\begin{equation}\label{EqAnnihilatingSpectrumStep}
\begin{gathered}
b\wedge a_1\wedge\cdots\wedge a_k
 \wedge f_{i_1}(u_1)\wedge\cdots\wedge f_{i_\ell}(u_\ell),\\
b,a_j,u_j\in J,\qquad b\leqslant c_{i_t}\quad(1\leqslant t\leqslant\ell).
\end{gathered}
\end{equation}
Empty factor lists are allowed. Indeed, insert the factor
$b=q(\vec0)$ into $q(\vec a)$, which leaves its value unchanged.
Conversely, use $b$ as a parameter in the displayed expression; the
inequalities ensure that its all-zero value is $b$.
Constant polynomials add nothing.

First take a non-empty meet-closed set $S$. The set
$K=\{x\mid f_i(x)=c\text{ for all }i\}$ is a downset containing every
$f_j(A)$. Formula~\eqref{EqAnnihilatingSpectrumStep} shows that all
iterates from $S$ lie in $S\cup K$. In any derivation of a value other
than $c$, an input under an $f_i$ must therefore belong to $S$: an input
outside $S$ belongs to $K$ and would make the whole meet equal to $c$.
Flattening the derivation now expresses its value as an \emph{admissible
word}
\[
b\wedge f_{i_1}(a_1)\wedge\cdots\wedge f_{i_\ell}(a_\ell),
\qquad b,a_t\in S,
\]
where the prefix preceding $f_{i_t}(a_t)$ is at most $c_{i_t}$.
For completeness, the meet of two such words is admissible: meet their
initial elements in $S$ and concatenate their factor lists. Each new
prefix is below its original prefix. Formula~\eqref{EqAnnihilatingSpectrumStep}
then permits the remaining positive factors, with inputs in $S$, to be
appended. This proves the assertion by induction on derivations.

Combine all occurrences of an index $i$ at its first occurrence, using
$\bigwedge_t f_i(a_t)=f_i(\bigwedge_t a_t)$ and meet closure of $S$.
Moving these factors earlier only decreases intervening prefixes, so
admissibility is preserved. The resulting word has at most $N$ factors,
with distinct indices, and can be formed in at most $N$ successive
inductive steps from $S$.
The element $c$ causes no exception: if $b\in S$ satisfies
$b\leqslant c_i$, then $f_i(b)\leqslant f_i(c_i)=c$, and the polynomial
$b\wedge f_i(z)$ produces $c$ from $b$ in one step. If no such $b,i$
exist,~\eqref{EqAnnihilatingSpectrumStep} gives $T(S)=S$.
Thus $T^N(S)$ is the full inductive closure of $S$.

For arbitrary non-empty $I$, its non-empty finite-meet closure $S$
satisfies $I\subseteq S\subseteq T(I)$. Every inductive set is
meet-closed, so $S$ and $I$ have the same inductive closure. Hence that
closure is reached in $T^{N+1}(I)$, proving the uniform bound $N+1$.

For sharpness, take the chain
\[
L_N:\quad c<d<c_N<\cdots<c_1<a<0,
\]
put $c_{N+1}=d$, and define
\[
f_i(0)=c_i,\qquad f_i(a)=c_{i+1},\qquad
f_i(x)=c\quad(x\leqslant c_1).
\]
These maps are contractive and meet-preserving, and all their composites
are constantly $c$. Thus $L_N\in\mathcal Q_N$.
In $L_N^2$ start with
$I=\{(c_1,0),(0,c_1),(a,a)\}$.
No original anchor lies below any $(c_i,c_i)$, so $T(I)$ is exactly the
non-empty finite-meet closure of $I$; it first supplies $(c_1,c_1)$.
Thereafter the polynomial
$q_i(z)=(c_i,c_i)\wedge f_i(z)$, applied to the original input $(a,a)$,
supplies $(c_{i+1},c_{i+1})$ at step $i+1$.
In particular, $(d,d)\in T^{N+1}(I)$.

There is no shortcut. For $1\leqslant k\leqslant N+1$, every point of
$T^k(I)$ whose two coordinates exceed $c$ has both coordinates at least
$c_k$. The case $k=1$ follows from the meet-closure description.
In~\eqref{EqAnnihilatingSpectrumStep}, a value with both coordinates
above $c$ has an anchor and zero-exponent inputs with that same property.
At step $k+1$ the anchor therefore lies above $(c_k,c_k)$.
A permitted factor $f_i$ requires
$(c_k,c_k)\leqslant b\leqslant(c_i,c_i)$, hence $i\leqslant k$.
Each of its coordinates above $c$ is $c_i$ or $c_{i+1}$, and so is at
least $c_{k+1}$. Taking the meet proves the assertion.
Thus $(d,d)\notin T^N(I)$. The same square witnesses this lower bound
in the variety generated by $L_N$, while its inclusion in $\mathcal Q_N$
gives the uniform upper bounds. Both asserted rank pairs follow.
\end{proof}

In what follows, $\mathcal F_N$ denotes the variety generated by $L_N$
for $N\geqslant1$, and $\mathcal F_0$ the variety generated by the
two-element meet-semilattice with greatest element $0$.
Thus $\mathcal F_N$ is generated by one finite algebra and has rank pair
$(N+1,1)$; no equality $\mathcal F_N=\mathcal Q_N$ is needed.

\begin{theorem}\label{ThmInfiniteInduction}
The variety $\mathcal S_\infty$ of meet-semilattices with greatest
element $0$ and a contractive meet-endomorphism $f$ has rank pair
$(\infty,1)$. The same pair is attained by the variety generated by
$\mathbb Z_{\leqslant0}$ with meet minimum and $f(x)=x-1$.
\end{theorem}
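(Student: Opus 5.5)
Deductive rank exactly $1$ comes straight from earlier results. The operation $f$ is order-preserving, so Lemma~\ref{LemMeetDeductiveClosure} gives $\mathcal D_0(I)=\mathord\uparrow I$, which is idempotent. Both varieties are non-trivial, so Theorem~\ref{ThmRankZero} rules out deductive rank zero. The whole task is therefore to show that the inductive rank is infinite in the variety generated by $\mathbb Z_{\leqslant0}$. Since that variety is contained in $\mathcal S_\infty$, the same lower bound then holds for $\mathcal S_\infty$, and so both varieties have pair $(\infty,1)$.

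First I would describe one inductive step, as in formula~\eqref{EqAnnihilatingSpectrumStep}. Every non-constant polynomial is a meet of a parameter and terms $f^{e}(x_j)$ with $e\geqslant0$. Let $J$ be non-empty and $T=\mathcal I_0$. Then a value in $T(J)$ has the form
\[
b\wedge f^{e_1}(u_1)\wedge\cdots\wedge f^{e_\ell}(u_\ell),\qquad b,u_t\in J,
\]
subject to $b\leqslant f^{e_t}(0)$ for every $t$ with $e_t\geqslant1$. In $\mathbb Z_{\leqslant0}$ we have $f^e(0)=-e$. So in the chain $\mathbb Z_{\leqslant 0}$ a single step can apply $f^e$ only when some anchor $b\in J$ already lies at or below $-e$. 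The result is then at most $\min(b,u-e)$.

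For the lower bound I would take the seed $I=\{0,-1\}$ in $\mathbb Z_{\leqslant0}$. Let $m_k=\min T^k(I)$, assuming the minimum exists. If $b,u\geqslant m_k$ and $e\leqslant -b\leqslant -m_k$, then
\[
u-e\geqslant m_k-(-m_k)=2m_k,
\]
so $m_{k+1}\geqslant 2m_k$. Conversely, taking $b=u=m_k$ and $e=-m_k$ attains $2m_k$. Hence $m_k=-2^k$. I would also check that every integer between $0$ and $-2^k$ appears in $T^k(I)$, using smaller exponents; this is not needed for the argument. Since $m_k$ is strictly decreasing, $T^k(I)\neq T^{k+1}(I)$ for every $k$, and the inductive rank is $\infty$. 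This matches the $2^k-1$ word-length behaviour noted in the introduction.

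The main obstacle is the normal-form step. Composites like $f(b'\wedge f(x))$ must be flattened, using that $f$ preserves meets, and then the anchor condition $q(\vec0)\in J$ must be turned into the inequality constraint $b\leqslant f^{e_t}(0)$. The delicate point is that $q(\vec 0)$ is itself the parameter meet $b$ combined with $f^{e}(0)$. Inserting $q(\vec0)$ into $q(\vec a)$ leaves the value unchanged, because $f^{e}(a)\leqslant f^{e}(0)$ by monotonicity, exactly as in the proof of Theorem~\ref{ThmAnnihilatingSpectrum}. The doubling bound only uses the inequality direction, so even a slightly weaker normal form would suffice.
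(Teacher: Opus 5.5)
Your proof is correct and follows essentially the same route as the paper: the meet normal form for polynomials, the anchor forcing each exponent to satisfy $k\leqslant -b$, and hence a doubling to $-2^k$. Deductive rank one comes, as in the paper, from Lemma~\ref{LemMeetDeductiveClosure} and Theorem~\ref{ThmRankZero}. The only difference is that the paper computes the iterates exactly as $\{-1,\ldots,-2^k\}$ from the seed $\{-1\}$, whereas you track only their minimum, which suffices for strict growth.
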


\begin{proof}
Deduction is upward closure by Lemma~\ref{LemMeetDeductiveClosure}.
In the displayed algebra put
\[
J_M=\{-1,-2,\ldots,-M\}\qquad(M\geqslant1).
\]
Every polynomial is a meet of a parameter and factors
$f^k(x_j)$, with $k\geqslant0$. If its all-zero value is an anchor
$b\in J_M$, every exponent $k$ that occurs satisfies
$b\leqslant f^k(0)=-k$, hence $k\leqslant M$. After inserting the
anchor into the meet, every factor evaluated on $J_M$ is at least
$-2M$, and the resulting value is at most $-1$. Thus
$\mathcal I_0(J_M)\subseteq J_{2M}$.
Conversely, $q(x)=f^M(x)$ has $q(0)=-M\in J_M$ and takes the values
$-M-1,\ldots,-2M$ on $J_M$. Extensivity gives
$\mathcal I_0(J_M)=J_{2M}$. Consequently
\[
\mathcal I_0^k(\{-1\})=J_{2^k}\qquad(k\geqslant0),
\]
and these sets increase strictly at every step. This gives infinite
inductive rank already in the variety generated by this algebra.
Non-triviality and Theorem~\ref{ThmRankZero} give deductive rank exactly $1$.
\end{proof}

\subsection{Varieties of rank pair \texorpdfstring{$(1,d)$}{(1,d)}}\label{SubsecCornishRanks}

A \emph{Cornish algebra} is a bounded distributive lattice expanded by
unary operations, each of which is either a lattice endomorphism
preserving the bounds or a lattice dual endomorphism interchanging
them; see \cite[Definition~5.1]{DaveyGair16}. The distinguished constant
in this subsection is the lattice bottom. A word in the additional
unary operations will be called \emph{even} or \emph{odd} according to
the parity of the number of dual endomorphisms occurring in it. The
empty word is even.

\begin{lemma}\label{LemCornishInduction}
Every variety of Cornish algebras has inductive rank at most one.
More precisely, for a non-empty subset $I$ of a Cornish algebra, let
$J$ be the lattice ideal generated by the even-word images of $I$, and
let $F$ be the lattice filter generated by its odd-word images. Then
\begin{equation}\label{EqCornishRelation}
sR_It\quad\Longleftrightarrow\quad
s\leqslant t\vee j\ \text{ and }\ t\wedge h\leqslant s
\quad\text{for some }j\in J,\ h\in F.
\end{equation}
In particular,
\begin{equation}\label{EqCornishDeduction}
\mathcal D_0(I)=
\{x\mid x\wedge h\leqslant i\text{ for some }i\in I, h\in F\}.
\end{equation}
\end{lemma}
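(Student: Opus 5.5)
The plan is to define the relation $S$ by the right-hand side of \eqref{EqCornishRelation} and prove $S=R_I$ directly. The other assertions then follow by short computations with $S$.

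\emph{Step 1: closure properties of $J$ and $F$.} First record how the unary operations act on $J$ and $F$.
\begin{itemize}
\item An endomorphism $e$ maps even-word images to even-word images and odd to odd. Being order-preserving and join/meet-preserving, it gives $e(J)\subseteq J$ and $e(F)\subseteq F$.
\item A dual endomorphism $d$ swaps parity and is order-reversing, interchanging joins and meets. Hence $d(J)\subseteq F$ and $d(F)\subseteq J$.
\end{itemize}

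\emph{Step 2: $R_I\subseteq S$.} Reflexivity comes from taking $j=0$ and $h=1$. The generating pairs $(i,0)$ lie in $S$ because $i\in J$ (the empty word is even), so $i\leqslant 0\vee i$ and $0\wedge h\leqslant i$. For compatibility with the lattice operations, combine witnesses as $j\vee j'$ and $h\wedge h'$, using distributivity, e.g.
\[
s\wedge s'\leqslant(t\vee j)\wedge(t'\vee j')\leqslant(t\wedge t')\vee j\vee j',
\]
and dually for the lower bounds. Bounds are diagonal pairs. For an endomorphism $e$, simply apply $e$ to both inequalities and use Step 1. For a dual endomorphism $d$ the two conditions swap roles:
\[
s\leqslant t\vee j\ \Rightarrow\ d(t)\wedge d(j)\leqslant d(s),\qquad t\wedge h\leqslant s\ \Rightarrow\ d(s)\leqslant d(t)\vee d(h),
\]
with $d(j)\in F$ and $d(h)\in J$. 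So $S$ is a semicongruence containing $I\times\{0\}$, whence $R_I\subseteq S$.

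\emph{Step 3: $S\subseteq R_I$.} Show $(j,0)\in R_I$ for $j\in J$ and $(h,1)\in R_I$ for $h\in F$.
\begin{itemize}
\item For an even word $w$ and $i\in I$, $w(i)\,R_I\,w(0)=0$. Joins of such pairs stay in $R_I$, and meeting with a parameter $j$ moves down, giving $(j,0)$.
\item For an odd word, $w(i)\,R_I\,w(0)=1$. Taking meets and joining with a parameter $h$ gives $(h,1)$.
\end{itemize}
Given $s\leqslant t\vee j$ and $t\wedge h\leqslant s$, apply the polynomial
\[
p(x,y)=(s\wedge x)\vee(t\wedge y)\vee(s\wedge t)
\]
to the pairs $(j,0)$ and $(h,1)$. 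Then $p(0,1)=t$, while
\[
p(j,h)=(s\wedge(t\vee j))\vee(t\wedge h)=s\vee(t\wedge h)=s.
\]
Hence $(s,t)\in R_I$.

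\emph{Step 4: consequences.} For \eqref{EqCornishDeduction}, note $x\in IR_I$ means there is $i\in I$ with $i\leqslant x\vee j$ and $x\wedge h\leqslant i$. The first condition is automatic with $j=i\in J$, which leaves exactly the stated formula.

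For inductive rank at most one, the aim is $R_{\mathcal I_0(I)}=R_I$ together with transitivity of $R_I$.
\begin{itemize}
\item $S$ is transitive: chain the inequalities, using $j\vee j'$ and $h\wedge h'$.
\item $R_{\mathcal I_0(I)}=R_I$: if $s\leqslant t\vee j$ with $t\in I$, then each even-word image $w(s)\leqslant w(t)\vee w(j)$ lies in $J$. For odd $w$, $w(s)\geqslant w(t)\wedge w(j)\in F$. So $\mathcal I_0(I)$ generates the same $J$ and $F$.
\end{itemize}
Therefore $\mathcal I_0^2(I)=R_IR_II\subseteq R_II=\mathcal I_0(I)$. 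The empty seed is trivial.

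The main obstacle is expected in Steps 1–2: verifying that $J$ and $F$ are exchanged correctly under dual endomorphisms, so that the two halves of \eqref{EqCornishRelation} are preserved with swapped roles. Once this is set up, the rest is routine lattice arithmetic.
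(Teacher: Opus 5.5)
Your proposal is correct and follows essentially the same route as the paper: the same polynomial $(s\wedge X)\vee(t\wedge Y)\vee(s\wedge t)$ applied to the pairs $(j,0)$ and $(h,1)$, and compatibility from the fact that endomorphisms preserve $J$ and $F$ while dual endomorphisms exchange them. It also uses the same transitivity argument (joining ideal witnesses and meeting filter witnesses) and the same observation that $\mathcal I_0(I)\subseteq J$ leaves $J$, $F$ and hence $R_I$ unchanged; you just write out the distributive-lattice computations more explicitly and prove the two inclusions in the opposite order.
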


\begin{proof}
The pairs $(j,0)$, for $j\in J$, and $(h,1)$, for $h\in F$, belong
to $R_I$. Indeed, applying words to its generators gives the required
pairs on the ideal and filter generators; finite joins and meets,
followed by meeting or joining with diagonal parameters, give all the
indicated pairs. If the inequalities in
\eqref{EqCornishRelation} hold, the lattice polynomial
\[
q(X,Y)=(s\wedge X)\vee(t\wedge Y)\vee(s\wedge t)
\]
has $q(j,h)=s$ and $q(0,1)=t$. This proves one inclusion. Conversely,
the relation on the right of \eqref{EqCornishRelation} is reflexive
and lattice-compatible. Each basic endomorphism preserves $J$ and
$F$, while each basic dual endomorphism maps $J$ into $F$ and $F$ into
$J$. The same relation is therefore compatible with every basic
operation. It contains $I\times\{0\}$, proving equality. It is also
transitive: for two consecutive pairs, join their ideal witnesses and
meet their filter witnesses.

If $x\in\mathcal I_0(I)$, the first inequality with an anchor $i\in I$
gives $x\leqslant i\vee j$, and hence $x\in J$. Every even word maps
$J$ into $J$, and every odd word maps it into $F$. Thus replacing $I$
by $\mathcal I_0(I)$ changes neither $J$ nor $F$. Consequently
$R_{\mathcal I_0(I)}=R_I$, and transitivity gives
$\mathcal I_0^2(I)=\mathcal I_0(I)$. Finally, in the criterion for
$iR_Ix$, the first inequality can always be satisfied by taking $j=i$.
This gives \eqref{EqCornishDeduction}.
\end{proof}

For a non-trivial Cornish variety, inductive rank is exactly one by
Theorem~\ref{ThmRankZero}, since its lattice operations cannot be
projections or constants. There is no finite uniform bound on
deductive rank under the assumptions of Lemma~\ref{LemCornishInduction}:
it can be any positive integer or infinity, as
Theorem~\ref{ThmSpecialCornishSpectrum} below will show.

For $m\geqslant2$, put $X_m=\{0,\ldots,m\}$ and define maps
\[
\sigma(h)=\max\{h-1,0\},\qquad
\tau_j(h)=
\begin{cases}0,&h=j,\\ h,&h\ne j,\end{cases}
\quad 1\leqslant j\leqslant m.
\]
Let $A_m$ be the powerset lattice of $X_m$, with its bounds, expanded
by the operations
\[
f(U)=X_m\setminus\sigma^{-1}(U),\qquad
e_j(U)=\tau_j^{-1}(U)\quad(1\leqslant j\leqslant m),
\]
and put $\mathcal C_m=\operatorname{HSP}(A_m)$. Thus $f$ is a dual
endomorphism and the $e_j$ are endomorphisms. Boolean complementation
is not a basic operation. The point $0\in X_m$, which we call the
\emph{root}, is distinct from the algebraic constant
$0_{A_m}=\varnothing$. The maps $\tau_j$ mark the individual heights:
each collapses just one height to the root.

\begin{theorem}\label{ThmCornishRanks}
For every $m\geqslant2$,
\[
\bigl(r_{\mathcal I}(\mathcal C_m),
      r_{\mathcal D}(\mathcal C_m)\bigr)=(1,m-1).
\]
In particular, every pair $(1,d)$ with $d\geqslant1$ is realized by a
variety generated by a finite algebra.
\end{theorem}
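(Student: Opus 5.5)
The plan is to treat the two coordinates separately. The inductive rank is immediate: Lemma~\ref{LemCornishInduction} gives $r_{\mathcal I}(\mathcal C_m)\leqslant1$. Since $A_m$ has at least two elements and its lattice operations are neither projections nor constants, Theorem~\ref{ThmRankZero} makes this rank exactly $1$. The final assertion then follows from the main equality. For $d\geqslant2$ take $m=d+1$. For $d=1$ use any non-trivial Cornish variety, such as bounded distributive lattices: it has inductive rank $1$ by Lemma~\ref{LemCornishInduction}, and deductive rank $1$ because Lemma~\ref{LemMeetDeductiveClosure} dualises to a lattice with $0$ as bottom, where $F=\{1\}$ and \eqref{EqCornishDeduction} is downward closure. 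All the work therefore concerns deduction, which I would analyse through formula~\eqref{EqCornishDeduction}. Writing $I_k=\mathcal D_0^k(I)$ and $F_k$ for the filter generated by the odd-word images of $I_k$, we have
\[
I_{k+1}=\{x\mid x\wedge h\leqslant i\text{ for some } i\in I_k,\ h\in F_k\}.
\]
The process stabilises exactly when the filter $F_k$ stops growing.

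For the upper bound $r_{\mathcal D}\leqslant m-1$, I would pass to finite duality, following the description in the introduction. By Priestley duality every algebra of $\mathcal C_m$ is (a limit of) lattices of up-sets of ordered sets. These carry an order-reversing map dual to $f$ and order-preserving maps dual to the $e_j$, and they satisfy the relational identities holding on $X_m$. Since the variety is generated by $A_m$, duals of its finite members are images of copower-type constructions from $X_m$. Each point should therefore carry a height in $\{0,\ldots,m\}$: the dual of $f$ lowers height by one down to the root, and the duals of $\tau_j$ detect height $j$ by collapsing it to the root. Because the deduction operators are finitary and preserve directed unions, it suffices to treat finitely generated, hence finite, algebras. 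In the dual, the filter $F_k$ is determined by a set of points; the odd words are essentially single applications of $f$ composed with markers, since $ff$ is even. I would prove by induction on $k$ that the points relevant to $F_k$ at height $h$ are fixed once $k\geqslant h$. That is, deduction at height $h$ depends only on membership at heights below $h$, through $\sigma$. Heights $1,\ldots,m$ together with the root then give stabilisation by stage $m-1$; the offset of one comes from the root and height $1$ being decided already at stage $1$. This gives $I_{m-1}=I_m$.

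For the lower bound I would work directly in $A_m$ or a small power of it. I would choose a seed $I$ of subsets of $X_m$ for which $F_0$ contains $f$-images certifying only heights $\leqslant1$. The seed would be arranged so that the marker $e_j$ combined with $f$ can produce the certificate for height $j+1$ only after a set witnessing height $j$ has entered $I_k$. The key check is the invariant that $I_k$ contains no set separating heights $k+1,\ldots,m$ appropriately, which yields a strictly increasing chain $I_0\subsetneq\cdots\subsetneq I_{m-1}$. This uses the explicit formulas for $f$ and $e_j$ and is a finite computation.

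The main obstacle is the uniform upper bound across the whole of $\operatorname{HSP}(A_m)$, not just in $A_m$. Here I must justify the height stratification on duals of arbitrary finite members. I would first try to express the one-step change of $F_k$ in terms of the identities of $A_m$ involving $f$ and the $e_j$. Failing that, I would use natural duality for Cornish algebras \cite{DaveyGair16} to embed every dual space into a power of $X_m$ with the induced maps, where heights are defined coordinatewise.
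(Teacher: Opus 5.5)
Your handling of the inductive coordinate and of the final assertion is fine, and your upper-bound strategy (finite duality, heights, induction on height) is the paper's. The genuine gap is the lower bound: for $m\geqslant4$ it cannot be obtained in $A_m$ or in any power $A_m^k$.

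These algebras are Boolean, and their duals are antichains: disjoint copies of $X_m$ with componentwise $\sigma,\tau_j$. On an antichain, the greatest upset contained in $U\cup L$ is $U\cup L$ itself. Write $L(z)=\bigcup_{w\text{ odd}}w^{-1}(z)$ and $z_k=\bigcup\mathcal D_0^k(I)$. Then $z_k=z_0\cup L(z_{k-1})$ for $k\geqslant1$. A composite of three odd words is odd, so $L(z_k)=L(z_0)\cup L^2(z_0)$ for every $k\geqslant1$. Hence $\mathcal D_0^3(I)=\mathcal D_0^2(I)$ for every seed.

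Concretely, your heuristic that a certificate for height $j+1$ appears only after height $j$ has entered fails. The reason is that $f^3,f^5,\ldots$ are odd words, so odd words are not essentially single applications of $f$. From the seed $\{\{1\}\}$ in $A_m$, one step gives all subsets of $\{1\}\cup(\{2,4,6,\ldots\}\cap X_m)$, and the next gives all subsets of $\{1,\ldots,m\}$.

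The required delay must come from the order of the dual space. That means using a non-Boolean member of $\mathcal C_m$, a quotient of a subalgebra of a power of $A_m$. There, the upper-cone test for membership in $c_L(U)$ holds a point back until the points above it have entered. This is what the paper constructs:
\begin{itemize}
\item $m-2$ branches of lengths $3,\ldots,m$ sharing one root;
\item at each height, fences generated by $b_{j+1}\leqslant a_j$ and their alternating $\sigma$-images;
\item the seed $\{\{c_0,\ldots,c_{m-3}\}\}$, for which $b_j$ becomes admissible only after $a_{j-1}$ has entered;
\item a check that $\operatorname{Up}(P)\in\mathcal C_m$, by padding to $Q$, embedding into $A_m^{m-2}$, and restricting to the invariant subposet.
\end{itemize}

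Your upper-bound sketch also skips the point where the order enters. For a general finite member, one deductive step replaces each anchor $U$ by $c_L(U)$, and membership in $c_L(U)$ tests the whole upper cone of a point. So ``deduction at height $h$ depends only on lower heights through $\sigma$'' needs proof. The paper derives it from the markers:
\begin{itemize}
\item $\rho=\sigma^m=\sigma^{m+1}$ is both order-preserving and order-reversing, so comparable points have equal roots;
\item if $p\leqslant q$ have distinct nonzero heights, then $p\leqslant\rho(q)$, by \eqref{EqCornishHeightSeparation};
\item thus, in a component whose root is absent, the upper cone of each relevant point stays at its own height;
\item components whose root lies in $z_0$ are filled at the first step.
\end{itemize}
The same marker argument shows that heights survive the identifications in the dual of a subalgebra of a power. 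You flagged this step but left it open.

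Your count is also off. The free region at height $h$ is fixed by $L_{h-2}$, and the union profile by $z_{h-1}$; this is what yields $S_m=S_{m-1}$ via \eqref{EqCornishAnchorProfiles}. Taken literally, ``fixed once $k\geqslant h$'' gives only $F_m=F_{m+1}$, hence the bound $m+1$. An offset of one does not close that difference.
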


\begin{proof}
Lemma~\ref{LemCornishInduction} gives the inductive upper bound. It is
strict on $\{1\}$, since $f(0)=1$ and $f(1)=0$. We prove the deductive
bound using finite duality, keeping track of the homomorphic images
as well as the subalgebras of powers of $A_m$.

\emph{The finite dual spaces.}
The finite case of Cornish duality represents an algebra as the
lattice $\operatorname{Up}(P)$ of upsets of a finite poset $P$; the
operators $f,e_j$ are represented by an order-reversing map $\sigma$
and order-preserving maps $\tau_j$, respectively. The formulas for
their action on upsets are the same inverse-image formulas as above.
This is the framework of \cite[Theorem~5.2]{DaveyGair16}; we explain
the representation details needed here.

Every finite $B\in\mathcal C_m$ is a quotient of a subalgebra $C$ of
a finite power of $A_m$: use the algebra of term functions on $A_m$
on a finite generating set of $B$. The finite lattice dual of $C$
is an equivariant monotone image of a disjoint union of copies of
the discrete space $X_m$. Indeed, a prime filter of $C$ extends to
the finite Boolean lattice containing it, where it is a coordinate
evaluation. The quotient from $C$ to $B$ identifies the dual of $B$
with an invariant induced subposet of the dual of $C$. These are the
finite instances of the embedding--surjection and product--disjoint
union correspondences in \cite[Lemmas~2.7 and~2.10]{DaveyGair16}.

It follows that the dual $P$ of $B$ has the following properties.
Every point $p$ has a root
\[
\rho(p)=\sigma^m(p)=\sigma^{m+1}(p),
\]
fixed by all the maps. A nonroot point has a unique height
$j\in\{1,\ldots,m\}$: $\tau_j(p)=\rho(p)\ne p$, while all other
markers fix $p$. Its image under $\sigma$ has height $j-1$, unless
it is already the root. To verify survival under identifications,
take a representative at height $j$. Its image is fixed by the other
markers, and $\tau_j$ sends it to its root. If $\tau_j$ also fixes
it, it is a root. Otherwise this characterizes height $j$ uniquely;
representatives of different heights cannot have the same nonroot
image. The assertion about $\sigma$ follows from the same
representatives. Invariant subposets inherit all these properties.

The map $\rho$ is both order-preserving and order-reversing, since
its two displayed expressions have opposite parities. Thus comparable
points have equal roots. Moreover, if $p\leqslant q$ have different
nonzero heights, applying the marker of the height of $q$ gives
\begin{equation}\label{EqCornishHeightSeparation}
p\leqslant\rho(p)=\rho(q).
\end{equation}

\emph{Deductive profiles.}
For a subset $z\subseteq P$, let $L(z)$ be the union of $w^{-1}(z)$
over all words in the dual maps having an odd number of occurrences
of $\sigma$. For a non-empty seed $I\subseteq\operatorname{Up}(P)$,
put $z_0=\bigcup I$. The filter in
\eqref{EqCornishDeduction} is generated by $P\setminus L(z_0)$.
Consequently
\begin{equation}\label{EqCornishUpsetDeduction}
\mathcal D_0(I)=
\{W\in\operatorname{Up}(P)\mid W\subseteq U\cup L(z_0)
                              \text{ for some }U\in I\}.
\end{equation}
For an upset $U$ and an arbitrary subset $L$ of $P$, write $c_L(U)$
for the greatest upset contained in $U\cup L$. If $L\subseteq M$,
then
\begin{equation}\label{EqCornishAbsorption}
c_M(c_L(U))=c_M(U).
\end{equation}
This follows from $U\subseteq c_L(U)\subseteq U\cup L\subseteq U\cup M$.

Put $S_k=\mathcal D_0^k(I)$, $z_k=\bigcup S_k$, and $L_k=L(z_k)$.
The $L_k$ increase. Equations~\eqref{EqCornishUpsetDeduction}
and~\eqref{EqCornishAbsorption} give, for every $k\geqslant1$,
\begin{equation}\label{EqCornishAnchorProfiles}
S_k=\bigcup_{U\in I}
\{W\in\operatorname{Up}(P)\mid W\subseteq c_{L_{k-1}}(U)\}.
\end{equation}
Thus all iterations can be computed using the original anchors.
In particular, $p\in c_L(U)$ precisely when every point above $p$
belongs to $U\cup L$. Formula~\eqref{EqCornishAnchorProfiles} retains
the correlations between the different root components of an anchor.

\emph{The upper bound.}
Every odd dual word sends a nonroot point to a strictly smaller height
or to its root. An effective marker sends it to the root, and otherwise
the odd number of occurrences of $\sigma$ lowers its height. All words
fix roots. Therefore the root membership of $z_k$ never changes:
an absent root belongs neither to an anchor nor to $L_k$ and cannot
be adjoined by \eqref{EqCornishUpsetDeduction}.

If a root belongs to $z_0$, all points having that root belong to
$L_0$, by a sufficiently large odd power of $\sigma$. Since order
comparisons do not cross root components, this whole component is
filled at the first step and remains fixed. Now consider a component
whose root $r$ is absent from $z_0$. Its root and every point below
the root remain absent. If $p\nleqslant r$, every point above $p$
has the same height as $p$, by \eqref{EqCornishHeightSeparation};
a root or a point below the root cannot be above $p$ either.

On the remaining points of height one, $L_k$ is empty for every $k$,
so their union profile is unchanged from $z_0$. At height $j\geqslant2$,
$L_k$ depends only on union profiles at smaller heights. By induction
on $j$, those profiles have stabilized by step $j-2$. Hence the free
region at height $j$ is fixed by $L_{j-2}$, and
\eqref{EqCornishAnchorProfiles}, whose upper-cone test uses only that
height, shows that its union profile is fixed by $z_{j-1}$. Points
below the root have constant zero profiles throughout. Thus $L_{m-2}$
is already the final free region, including the previously treated
root components. Equation~\eqref{EqCornishAnchorProfiles} now gives
$S_m=S_{m-1}$.

This proves the bound for every finite $B\in\mathcal C_m$. The
variety is locally finite, because its finite-signature generator is
finite. A witness to failure of
$\mathcal D_0^m(I)=\mathcal D_0^{m-1}(I)$ in an arbitrary member uses
only finitely many original seed elements and polynomial parameters.
Their generated subalgebra is finite, where the bound just proved
applies. Hence $r_{\mathcal D}(\mathcal C_m)\leqslant m-1$ uniformly.

\emph{Sharpness.}
For $m=2$, nontriviality gives the required lower bound one. Suppose
$m\geqslant3$ and put $n=m-2$. Take $n$ branches, indexed by
$j=0,\ldots,n-1$, sharing a root $r$. Branch $j$ has one point at each
height $1,\ldots,j+3$. Denote its three highest points by
\[
a_j\quad\text{at height }j+3,\qquad
b_j=\sigma(a_j),\qquad c_j=\sigma(b_j).
\]
Let $\sigma$ move down each branch, fixing $r$, and let the markers
collapse their respective heights to $r$. Impose the comparisons
\[
b_{j+1}\leqslant a_j\qquad(0\leqslant j<n-1)
\]
and their successive $\sigma$-images, reversing each inequality at
every application. All comparisons relate equal heights. At each
height the orientations of successive adjacent-branch comparisons
alternate; they form a fence, with no strict order path of length two.
Thus they define a poset $P$, $\sigma$ reverses its order, and every
marker preserves it. In particular, every $a_j$ and every $c_j$ is
maximal, and the only strict upper neighbours of $b_j$ are
$a_{j-1}$ and $c_{j+1}$, when these indices exist.

To check that $\operatorname{Up}(P)$ belongs to $\mathcal C_m$, pad
every branch above $a_j$ to height $m$, making the added points
order-isolated. The resulting space $Q$ is an equivariant monotone
image of a disjoint union of $n$ copies of the discrete $X_m$.
Inverse image therefore embeds $\operatorname{Up}(Q)$ into $A_m^n$.
The induced subposet $P$ is invariant under every dual map, so
restriction is a surjective homomorphism
$\operatorname{Up}(Q)\to\operatorname{Up}(P)$. This proves the
required membership without changing any height label.

Let $C=\{c_0,\ldots,c_{n-1}\}$, an upset, and use the singleton seed
$\{C\}$ in $\operatorname{Up}(P)$. Points below $c_j$ along branch
$j$ never enter a profile: they are absent initially, and their odd
images are still lower points or the root. Every $b_j$ is in the free
region from the start, since $\sigma(b_j)=c_j\in C$. The point $a_j$
is in the free region precisely when $b_j$ has entered: all its other
odd predecessors lie below $c_j$. Markers add no further odd images,
since a reset reaches the root, which is also reached by an odd power
of $\sigma$.

The exact maximal profiles of the successive deductions are therefore
\begin{equation}\label{EqCornishSharpProfiles}
U_k=C\cup\{b_j\mid 0\leqslant j<n, j<k\}
       \cup\{a_j\mid 0\leqslant j<n, j<k-1\}
\qquad(k\geqslant0).
\end{equation}
Indeed, their free regions are
\[
L(U_k)=\{b_0,\ldots,b_{n-1}\}
        \cup\{a_j\mid 0\leqslant j<n, j<k\}.
\]
All the indicated $a_j$ can be added because they are maximal. The
upper-cone test admits $b_j$ when $a_{j-1}$ is available, the other
possible upper neighbour $c_{j+1}$ being present from the start.
This proves \eqref{EqCornishSharpProfiles} inductively using
\eqref{EqCornishAnchorProfiles}. At every positive step the deductive
set consists of all upsets contained in $U_k$.

The last point $a_{n-1}$ first belongs to $U_{n+1}$, and not to $U_n$.
Since $\{a_{n-1}\}$ is an upset, it witnesses a strict $(n+1)$st
deductive step. As $n+1=m-1$, the upper bound is sharp.
\end{proof}

\subsection{An infinite deductive rank}\label{SubsecInfiniteDeduction}

The remaining building block is a familiar variety.

\begin{theorem}\label{ThmMonoids}
The variety $\mathcal M$ of commutative monoids, with distinguished identity $0$, has rank pair $(1,\infty)$.
\end{theorem}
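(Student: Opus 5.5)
The plan is to compute $R_I$ explicitly, as in the proof of Theorem~\ref{ThmD}. A polynomial of a commutative monoid has the form $q(\vec z)=b+\sum n_jz_j$. Let $\langle I\rangle$ denote the submonoid generated by $I$. For non-empty $I$ we then get
\[
R_I=\{(x+y,x)\mid x\in A,\ y\in\langle I\rangle\},
\]
with both inclusions handled as in Theorem~\ref{ThmD}. The reverse inclusion comes from the polynomial $x+\sum z_j$ applied to generators. Consequently
\[
\mathcal I_0(I)=I+\langle I\rangle,\qquad
\mathcal D_0(I)=\{x\mid x+y\in I\text{ for some }y\in\langle I\rangle\}.
\]

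For the inductive rank, note that $I\subseteq I+\langle I\rangle\subseteq\langle I\rangle$. Hence $\mathcal I_0(I)$ generates the same submonoid as $I$, and
\[
\mathcal I_0^2(I)=I+\langle I\rangle+\langle I\rangle=\mathcal I_0(I).
\]
So the inductive rank is at most one. Since addition is neither a projection nor constant, Theorem~\ref{ThmRankZero} makes the rank exactly one; concretely, the seed $\{1\}$ in $\mathbb N_0$ is not inductive.

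For the infinite deductive rank, I would mimic the semiring witness. Take the free commutative monoid $A=\mathbb N_0^{(\omega)}$ with basis $x_1,x_2,\ldots$, put $x_0=0$, and use the seed
\[
I=\{x_j+x_{j+1}\mid j\geqslant 0\}.
\]
The claim to establish is that $\mathcal D_0^k(I)=I\cup\{0,x_1,\ldots,x_{k+1}\}$ for $k\geqslant1$. The argument has three parts.

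First, every element of the current set $J_k$ has total degree at most two. Any residual $u$ with $u+y\in J_k$ therefore has degree at most two, and is either $0$, a single $x_i$, or an element of $J_k$ itself, the last case arising when $y=0$.

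Second, a single $x_i$ arises only as $x_i+x_{i\pm1}\in I$ with $x_{i\pm1}\in\langle J_k\rangle$, or as $x_i+0$ with $x_i\in J_k$.

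Third, $x_l\in\langle J_k\rangle$ iff $x_l\in J_k$. This holds because the sums of adjacent pairs $x_j+x_{j+1}$ together with the singles $x_1,\ldots,x_{k+1}$ generate a submonoid containing no other isolated basis vector. I would verify this with a parity or degree argument: the coefficient pattern of any sum of adjacent pairs plus the given singles cannot be a single $x_l$ with $l>k+1$, since the total degree would have to be one, forcing the use of a single generator.

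Granting these, the first step adjoins $0$ and $x_2$ (from $x_1+x_2$, using $x_1\in I$), and step $k$ adjoins exactly $x_{k+1}$. The chain is therefore strictly increasing and the deductive rank is $\infty$.

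The main obstacle is this last bookkeeping: making precise that no element other than $0$ and the next variable enters at each step, in particular that the degree-two elements remain exactly $I$. This follows from $x_i+x_j=u+y$ with $y\in\langle J_k\rangle$, which forces $u$ to be in $J_k$, a single variable, or $0$. Finally, since $\mathbb N_0[x_1,\ldots]$ with only its additive structure belongs to $\mathcal M$, one could alternatively observe that the linear part of the semiring computation in Theorem~\ref{ThmD} transfers verbatim. There the ideal's degree-one part was exactly the additive submonoid, so the same chain appears.
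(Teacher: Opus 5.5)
Your proposal is correct and follows the paper's proof essentially step for step. You use the same one-step formulas $\mathcal I_0(I)=I+\langle I\rangle$ and $\mathcal D_0(I)=\{x\mid x+y\in I,\ y\in\langle I\rangle\}$, the same idempotency argument for induction, and the same free-monoid witness $I=\{x_j+x_{j+1}\}$ with iterates $I\cup\{0,x_1,\ldots,x_{k+1}\}$. Your total-degree argument simply makes explicit the paper's remark that a combination of adjacent sums cannot isolate an endpoint.
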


\begin{proof}
For a non-empty subset $I$ of a commutative monoid,
\[
\mathcal I_0(I)=I+\langle I\rangle,
\qquad
\mathcal D_0(I)=\{x\mid x+y\in I\text{ for some }y\in\langle I\rangle\},
\]
where $\langle I\rangle$ is the submonoid generated by $I$. These formulae follow by collecting the parameters in a monoid polynomial. The first output is the subsemigroup generated by $I$, so induction is idempotent; it is not the identity operator in the additive monoid $\mathbb N_0$.

For deduction, use the free commutative monoid on $e_1,e_2,\ldots$, write $e_0=0$, and take
\[
I=\{e_j+e_{j+1}\mid j\geqslant0\}.
\]
For every $k\geqslant1$ the exact iterates are
\begin{equation}\label{EqMonoidInfiniteChain}
\mathcal D_0^k(I)=I\cup\{0,e_1,\ldots,e_{k+1}\}.
\end{equation}
Indeed, the first step adjoins $0$ and $e_2$, using the anchors $e_1$ and $e_1+e_2$. More generally, a residual $x$ with $x+y$ in the current set is a summand of either a single generator or one of the adjacent sums $e_j+e_{j+1}$. A non-zero summand $y$ belonging to the generated submonoid can remove one endpoint of such a sum only if that endpoint is already present; if $y$ is the whole sum, the residual is $0$. Thus the only new generator at the next step is the next endpoint $e_{k+2}$, and it is obtained from $e_{k+1}+e_{k+2}$. This proves~\eqref{EqMonoidInfiniteChain} by induction. The chain is strictly increasing at every step, so deductive rank is infinite.
\end{proof}

\subsection{Completion of the general rank spectrum theorem}\label{SubsecCompleteRealization}

\begin{proof}[Proof of Theorem~\ref{ThmCompleteSpectrum}]
Theorem~\ref{ThmRankZero} settles all pairs with a zero coordinate.
Let $i,d\geqslant1$ be finite. If $i=d$, use $\mathcal P_i$ from
Theorem~\ref{ThmHMSharpRanks}. If $i>d$, use the finite-generated
variety $\mathcal F_{i-1}$ of pair $(i,1)$ from
Theorem~\ref{ThmAnnihilatingSpectrum}, and form
\[
\mathcal F_{i-1}\boxtimes\mathcal P_d.
\]
Theorem~\ref{ThmPermutableProductRanks} gives its pair as $(i,d)$.
If $i<d$, use instead
\[
\mathcal C_{d+1}\boxtimes\mathcal P_i,
\]
where Theorem~\ref{ThmCornishRanks} gives
$\operatorname{rk}(\mathcal C_{d+1})=(1,d)$.
The same product theorem gives pair $(i,d)$.
All these factors have finite signatures and are generated by finite
algebras. Equation~\eqref{EqIndependentProductGenerator} preserves
these properties, proving the finite-generator assertion.

For $r\geqslant1$ finite, Theorems~\ref{ThmInfiniteInduction},
\ref{ThmMonoids} and~\ref{ThmPermutableProductRanks} give
\[
\operatorname{rk}(\mathcal S_\infty\boxtimes\mathcal P_r)=(\infty,r),
\qquad
\operatorname{rk}(\mathcal M\boxtimes\mathcal P_r)=(r,\infty).
\]
Finally, $\mathcal S_\infty\boxtimes\mathcal M$ contains copies of
both expanded component varieties. Its inductive rank is therefore
infinite because that of $\mathcal S_\infty$ is infinite, and its
deductive rank is infinite because that of $\mathcal M$ is infinite.
This realizes $(\infty,\infty)$ and completes the spectrum.
\end{proof}

The witnesses are collected in Table~\ref{TabCompleteSpectrum}. The
role of the permutable factor is important: the product theorem is
not an assertion that arbitrary independent products have ranks
given by coordinatewise maxima.

\begin{example}\label{ExProductNotMaxima}
Let $\mathcal E$ be the variety of sets with a distinguished constant $0$.
It has pair $(0,1)$ by Theorem~\ref{ThmRankZero}, whereas
$\mathcal E\boxtimes\mathcal E$ has pair $(1,1)$.

Indeed, for a non-empty subset $I$ of a product $A\times B$, write
$P=\pi_A I$, $Q=\pi_B I$.
Formula~\eqref{EqProductStarRelations} gives
\[
R_I=(\Delta_A\cup(P\times\{0_A\}))
       \times(\Delta_B\cup(Q\times\{0_B\})).
\]
Both factors are transitive. Induction leaves the two projections
$P,Q$ unchanged, while deduction replaces them by
$P\cup\{0_A\},Q\cup\{0_B\}$. The displayed relation is unchanged
in either case, so transitivity makes both one-step operators
idempotent.

Induction is not the identity. In $\{0,1\}^2$, put
$I=\{(1,0),(0,1)\}$ and use the polynomial
$q(z)=p(z,(0,1))$, where $p$ is the selector. Then
\[
q(0,0)=(0,1)\in I,\qquad
q(1,0)=(1,1)\notin I.
\]
Thus inductive rank is one; deductive rank is one by non-triviality.
In particular, the product's inductive rank exceeds the maximum of
the two factor ranks.
\end{example}

\begin{table}[H]
\centering
\small
\setlength{\tabcolsep}{5pt}
\caption{Witnesses for the general rank spectrum theorem. All parameters are
positive finite integers unless indicated otherwise.}
\label{TabCompleteSpectrum}
\begin{tabular}{@{}lll@{}}
\toprule
Pair & Witness & Results\\
\midrule
$(0,0)$ & Trivial variety & \ref{ThmRankZero}\\[3pt]
$(0,1)$ & Pointed sets & \ref{ThmRankZero}\\[3pt]
$(r,r)$ & $\mathcal P_r$ & \ref{ThmHMSharpRanks}\\[3pt]
$(i,d)$, $i>d$ & $\mathcal F_{i-1}\boxtimes\mathcal P_d$
 & \ref{ThmAnnihilatingSpectrum}, \ref{ThmPermutableProductRanks}\\[3pt]
$(i,d)$, $i<d$ & $\mathcal C_{d+1}\boxtimes\mathcal P_i$
 & \ref{ThmCornishRanks}, \ref{ThmPermutableProductRanks}\\[3pt]
$(\infty,r)$ & $\mathcal S_\infty\boxtimes\mathcal P_r$
 & \ref{ThmInfiniteInduction}, \ref{ThmPermutableProductRanks}\\[3pt]
$(r,\infty)$ & $\mathcal M\boxtimes\mathcal P_r$
 & \ref{ThmMonoids}, \ref{ThmPermutableProductRanks}\\[3pt]
$(\infty,\infty)$ & $\mathcal S_\infty\boxtimes\mathcal M$
 & \ref{ThmCompleteSpectrum}\\
\bottomrule
\end{tabular}
\end{table}

\section{Special rank spectrum theorems}\label{SecSpecialSpectra}

The general rank spectrum theorem describes the pairs that occur without
additional assumptions on the variety. It is also natural to fix a class
$\mathscr C$ of varieties with a distinguished constant and to determine
its \emph{rank-pair spectrum}
\[
\operatorname{Spec}(\mathscr C)
=\{\operatorname{rk}(V)\mid V\in\mathscr C\}.
\]
The following special rank spectrum theorems illustrate the restrictions
that arise in several classes and in a prescribed family of action
varieties. We state their spectra for non-trivial varieties; adjoining
the trivial variety to a class adds $(0,0)$ to its spectrum.

\subsection{Subtractive varieties}

\begin{theorem}\label{ThmSpecialSubtractiveSpectrum}
The class of non-trivial $0$-subtractive varieties has rank-pair spectrum
\[
\{(1,1),(2,2)\}.
\]
Its subclasses consisting of Mal'tsev varieties, or of varieties having
a J\'onsson--Tarski term for $0$, both have spectrum $\{(1,1)\}$.
\end{theorem}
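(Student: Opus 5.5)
The plan is to combine the upper bounds of Theorem~\ref{ThmK} with the lower bounds of Theorem~\ref{ThmRankZero}, prove that the two ranks always coincide using the symmetry of $R_I$, and then exhibit one subtractive variety whose pair is $(2,2)$.

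\emph{Upper bounds.} Theorem~\ref{ThmK} bounds both ranks of a $0$-subtractive variety by $2$.

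\emph{Lower bounds.} Non-triviality and Theorem~\ref{ThmRankZero} give deductive rank at least $1$. For inductive rank, I will show that the subtraction term $s$ is neither a projection nor a constant in a non-trivial variety; Theorem~\ref{ThmRankZero} then gives inductive rank at least $1$.
\begin{itemize}
\item If $s(x,y)=x$, then $x=s(x,x)=0$.
\item If $s(x,y)=y$, then $x=s(x,0)=0$.
\item If $s$ is a constant term, then $x=s(x,0)=s(0,0)=0$.
\end{itemize}
Each case forces triviality. So both ranks lie in $\{1,2\}$.

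\emph{The two ranks coincide.} Take a non-empty $I$ and put $R=R_I$. By the proof of Theorem~\ref{ThmK},
\[
\mathcal I_0^2(I)=RRI=R0=IRR=\mathcal D_0^2(I),
\]
and this set is the full closure in both processes. Hence $\mathcal I_0(I)=\mathcal I_0^2(I)$ exactly when $RI=R0$, and $\mathcal D_0(I)=\mathcal D_0^2(I)$ exactly when $IR=R0$. In a $0$-subtractive variety $R$ is symmetric, by the remark after Theorem~\ref{ThmC}, so $RI=IR$. Therefore inductive rank $1$ holds if and only if deductive rank $1$ holds, and the only possible pairs are $(1,1)$ and $(2,2)$.

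\emph{The pair $(1,1)$ and the two subclasses.} Any non-trivial group variety is Mal'tsev, hence subtractive, and has pair $(1,1)$ by Theorem~\ref{PropStandardRanks}. For the subclasses, Corollary~\ref{CorB} (Mal'tsev) and Theorem~\ref{ThmStrongUnitalRank} (J\'onsson--Tarski term for $0$) give the upper bound $1$. Combined with the lower bound $1$ above, both subclasses have spectrum $\{(1,1)\}$.

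\emph{Realising $(2,2)$.} By the previous paragraph it suffices to find a subtractive algebra and a non-empty seed $I$ with $RI\neq R0$, that is, an element $x$ with $(x,0)\in R$ but $(x,a)\notin R$ for all $a\in I$.

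My first attempt will be the variety with a single binary operation $s$ and constant $0$, defined only by $s(x,x)=0$ and $s(x,0)=x$. I take the free algebra on one generator $a$ with seed $I=\{a\}$. Then $(s(0,a),0)\in R$, since it is obtained by applying $s$ to the pairs $(0,0)$ and $(a,0)$. The goal is to show $(s(0,a),a)\notin R$. I would do this by describing the normal forms of terms and exhibiting an explicit compatible reflexive relation that contains $(a,0)$ but not $(s(0,a),a)$. Equivalently, I could look for a small finite model: a three-element algebra $\{0,1,2\}$ in which the values of $s$ off the forced entries are chosen so that the generated relation $R_{\{1\}}$ omits a needed pair, checked by direct closure computation. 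Hilbert algebras, which are subtractive, are a fallback candidate.

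The main obstacle is this non-membership step. Closing a relation under $s$ tends to fill everything in, so the values of $s$ must be chosen carefully. Alternatively, I would construct an invariant such as a compatible preorder that separates the two pairs.
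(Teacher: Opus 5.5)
Your upper and lower bounds, the argument that the two ranks coincide, the realisation of $(1,1)$ by groups, and the treatment of the two subclasses are all correct. The paper's version of the coincidence argument is more direct: symmetry gives $R_JJ=JR_J$ for every $J$, so $\mathcal I_0=\mathcal D_0$ as operators.

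The gap is the realisation of $(2,2)$. This is half of the spectrum claim, and you leave it as a search. Worse, the concrete plan you commit to cannot work. From $(a,a),(a,0)\in R$ you get $(0,a)=s((a,a),(a,0))\in R$. Then $s((0,a),(a,0))=(s(0,a),a)\in R$, so $s(0,a)\in RI$ in every subtractive algebra.

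In fact, in your variety of $(s,0)$-algebras defined only by $s(x,x)=0$ and $s(x,0)=x$, no element of the subalgebra generated by $a$ can be a witness. Every closed term equals $0$, so applying a unary term $t$ to $(a,0)$ gives $(t(a),0)\in R$. Induction on $t$ then gives $(t(a),a)\in R$, with inductive step $s\bigl((t_1(a),a),(t_2(a),0)\bigr)=(t(a),a)$. Hence in the free algebra on $a$ you have $RI=R0$, and no choice of target element there gives a strict second step.

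A witness must therefore use a parameter outside the subalgebra generated by the seed. This is what the paper's three-element example does. On $B=\{0,a,b\}$ put $s(x,y)=x$ if $y=0$ and $s(x,y)=0$ otherwise. The reflexive relation $T=\Delta_B\cup(B\times\{0\})\cup(\{0\}\times B)$ is compatible with $s$: if the second input pair is $(0,0)$ the output is the first input pair, and otherwise one output coordinate is $0$. It is generated by $(a,0)$, via $(0,a)=s((a,a),(a,0))$, $(b,0)=s((b,b),(0,a))$ and $(0,b)=s((b,b),(b,0))$. Thus $R_{\{a\}}=R_{\{0,a\}}=T$, and both processes give the strict chain $\{a\}\mapsto\{0,a\}\mapsto B$; the element $b$ lies in $R0$ but not in $RI$. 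Your three-element idea points in this direction, but without such an explicit algebra and the closure computation, the proof of the spectrum is incomplete.
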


\begin{proof}
The relation $R_I$ is symmetric in a $0$-subtractive variety, as shown
after Theorem~\ref{ThmC}. Consequently the two one-step operators agree,
and hence so do their ranks. Theorem~\ref{ThmK} bounds these ranks by
$2$. In a non-trivial algebra, deduction of $\{a\}$ for $a\ne0$ adds
$0$, so neither rank is zero. Non-trivial groups realize $(1,1)$ by
Theorem~\ref{PropStandardRanks}.

For $(2,2)$, take $B=\{0,a,b\}$ with distinct elements and define
\[
s(x,y)=\begin{cases}x,&y=0,\\0,&y\ne0.\end{cases}
\]
This is a subtraction operation. The reflexive symmetric relation
\[
T=\Delta_B\cup(B\times\{0\})\cup(\{0\}\times B)
\]
is compatible with $s$: when the second input pair is $(0,0)$ the
output is the first input pair, and otherwise at least one output
coordinate is zero. Starting from $(a,0)$ and diagonal pairs, we obtain
\[
s((a,a),(a,0))=(0,a),\qquad
s((b,b),(0,a))=(b,0),\qquad
s((b,b),(b,0))=(0,b).
\]
Thus $R_{\{a\}}=T=R_{\{0,a\}}$, and both one-step operators give
the strict chain $\{a\}\mapsto\{0,a\}\mapsto B$.
The variety generated by $(B;s,0)$ therefore has pair $(2,2)$.

Finally, Corollary~\ref{CorB} and
Theorem~\ref{ThmStrongUnitalRank} give the upper bound one in the two
subclasses, and non-trivial groups belong to both of them.
\end{proof}

\subsection{Monotone semilattice expansions}

Recall that $\mathcal Q_N$ consists of meet-semilattices with greatest
element $0$ and $N$ contractive meet-endomorphisms whose pairwise
composites are the same constant map, as in
\eqref{EqAnnihilatingSpectrumAxioms}. For $N=0$ there are no additional
unary operations.

\begin{theorem}\label{ThmSpecialMeetSpectrum}
Let $\mathscr S$ be the class of non-trivial varieties of expansions
of meet-semilattices with greatest element $0$ by order-preserving
operations. Then
\[
\operatorname{Spec}(\mathscr S)
=\{(i,1)\mid i\in\overline{\mathbb N}_{>0}\}.
\]
For each fixed $N\geqslant0$, the non-trivial subvarieties of
$\mathcal Q_N$ have spectrum
\[
\{(i,1)\mid 1\leqslant i\leqslant N+1\}.
\]
\end{theorem}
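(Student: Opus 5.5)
The proof has two parts: first pin down the deductive coordinate for every variety in $\mathscr S$, then realise the inductive coordinates using varieties already constructed.

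\emph{Deductive coordinate.} Every $V\in\mathscr S$ consists of meet-semilattices with greatest element $0$ expanded by order-preserving operations. So Lemma~\ref{LemMeetDeductiveClosure} applies in every algebra of $V$ and gives $\mathcal D_0(I)=\mathord\uparrow I$ for non-empty $I$. Upward closure is idempotent, so $r_{\mathcal D}(V)\leqslant1$. Since $V$ is non-trivial, Theorem~\ref{ThmRankZero} gives $r_{\mathcal D}(V)=1$.

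\emph{Inductive coordinate is positive.} The meet operation is a term that is neither a projection nor constant in a non-trivial variety. Indeed, $x\wedge y=x$ would force $x\leqslant y$ for all $x,y$, hence triviality; the case $x\wedge y=y$ is symmetric; and $x\wedge y$ constant would give $x=x\wedge x=0\wedge 0$ for all $x$. By Theorem~\ref{ThmRankZero}, $r_{\mathcal I}(V)\geqslant1$. This proves $\operatorname{Spec}(\mathscr S)\subseteq\{(i,1)\mid i\in\overline{\mathbb N}_{>0}\}$.

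\emph{Realisation for $\mathscr S$.} Each $\mathcal Q_N$ lies in $\mathscr S$, because its unary operations are meet-endomorphisms and hence order-preserving. By Theorem~\ref{ThmAnnihilatingSpectrum}, $\mathcal Q_N$ has pair $(N+1,1)$, which gives every finite $i\geqslant1$ as $N$ ranges over $N\geqslant0$. The variety $\mathcal S_\infty$ of Theorem~\ref{ThmInfiniteInduction} also lies in $\mathscr S$ and gives $(\infty,1)$. This establishes the first equality.

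\emph{Subvarieties of $\mathcal Q_N$.} Let $V$ be a non-trivial subvariety of $\mathcal Q_N$; then $V\in\mathscr S$, so its pair is $(i,1)$ with $i\geqslant1$. For the upper bound, $\mathcal I_0$ and its iterates are computed inside a single algebra, and every algebra of $V$ is an algebra of $\mathcal Q_N$. The uniform bound $\mathcal I_0^{N+1}=\mathcal I_0^{N+2}$ proved for $\mathcal Q_N$ in Theorem~\ref{ThmAnnihilatingSpectrum} therefore holds in $V$, giving $i\leqslant N+1$.

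For each $1\leqslant i\leqslant N+1$ we need a subvariety of $\mathcal Q_N$ with inductive rank exactly $i$. I would take the variety $\mathcal Q_N^{(i-1)}$ defined by the axioms of $\mathcal Q_N$ together with the identities $f_k(x)=f_1(f_1(0))$ for $k\geqslant i$. For $i=1$ all the $f_k$ are constantly $c$. Then $\mathcal I_0$ is meet-closure plus possibly adjoining $c$, and this is reached in one step: the polynomial $b\wedge f_k(z)$ with $f_k(0)=c$ requires $b\leqslant c$, so it adds nothing new. Rank $1$ is witnessed by the two-element chain with all $f_k=c=$ bottom.

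The main obstacle is rank exactly $i$ for $i\geqslant2$. For the upper bound, I would rerun the proof of Theorem~\ref{ThmAnnihilatingSpectrum}. After collapsing, only the indices $1,\dots,i-1$ contribute non-constant factors, so admissible words have at most $i-1$ factors. The remaining $f_k$ only produce $c$, which is handled as there. This gives the bound $(i-1)+1=i$. For the lower bound, I would use the chain $L_{i-1}$ with the extra operations $f_k$, $k\geqslant i$, interpreted as the constant $c$. Meet-preservation, contractivity and the composite identities remain valid, and $c$ is still least. The square witness $I=\{(c_1,0),(0,c_1),(a,a)\}$ from that proof then still requires $i$ steps. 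The point to verify is that the constant operations, whose polynomials contribute only $c$, do not shortcut the height argument. They do not, since any factor equal to $c$ makes the whole meet $c$, and such values are excluded from the argument.
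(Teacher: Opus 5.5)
Your proof is correct and follows essentially the same route as the paper (Lemma~\ref{LemMeetDeductiveClosure} with Theorem~\ref{ThmRankZero} for deduction, Theorems~\ref{ThmAnnihilatingSpectrum} and~\ref{ThmInfiniteInduction} for realisation, restriction to subvarieties for the upper bound, and an expansion of $L_{i-1}$ for the intermediate ranks). It differs only in getting positivity of inductive rank from Theorem~\ref{ThmRankZero} rather than from the explicit witness $\{(a,0),(0,a)\}$ in a square, and in collapsing the surplus $f_k$ to $c$ rather than duplicating $f_1$. Your ``main obstacle'' is in fact no obstacle: since $c=f_1(f_1(0))$ is already a term, for $i\geqslant2$ your subvariety is term-equivalent to $\mathcal Q_{i-1}$. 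This is the counterpart of the paper's remark that duplicated operations leave the polynomial maps unchanged. Its pair $(i,1)$ therefore follows directly from Theorem~\ref{ThmAnnihilatingSpectrum}, without rerunning that proof.
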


\begin{proof}
Lemma~\ref{LemMeetDeductiveClosure} identifies deduction with upward
closure, so its rank is at most one and, by non-triviality, exactly
one. Inductive rank is also positive: in the square of a non-trivial
algebra, choose $a\ne0$. Induction of $\{(a,0),(0,a)\}$ contains
their missing meet $(a,a)$, using the polynomial
$q(x)=x\wedge(0,a)$. Theorem~\ref{ThmAnnihilatingSpectrum} supplies
every finite positive inductive rank, and
Theorem~\ref{ThmInfiniteInduction} supplies infinite inductive rank.
This proves the first assertion.

For the second, passing to a subvariety cannot increase either rank,
so Theorem~\ref{ThmAnnihilatingSpectrum} gives the asserted upper
bound. To realize $2\leqslant i\leqslant N+1$, expand $L_{i-1}$ by
interpreting each additional operation $f_i,\ldots,f_N$ as $f_1$.
The identities defining $\mathcal Q_N$ still hold: the new operations
are contractive meet-endomorphisms and every composite of two of the
operations is constantly the same least element $c$. Duplicating
operations does not change the polynomial operations in any member
of the generated variety, so the rank pair remains $(i,1)$.
For $(1,1)$, take a two-element meet-semilattice with greatest element
$0$ and, when $N>0$, interpret every $f_j$ as the constant map to its
least element. Every non-constant polynomial is then a meet of inputs
and parameters. Induction is closure under non-empty finite meets,
and deduction is upward closure. Both have rank one in its generated
variety.
\end{proof}

\subsection{Cornish algebras}

\begin{theorem}\label{ThmSpecialCornishSpectrum}
The class of non-trivial varieties of Cornish algebras, with the lattice
bottom distinguished, has rank-pair spectrum
\[
\{(1,d)\mid d\in\overline{\mathbb N}_{>0}\}.
\]
Every finite pair in this spectrum is realized by a variety generated
by a finite algebra in a finite signature.
For each fixed $m\geqslant2$, the non-trivial subvarieties of
$\mathcal C_m$ have spectrum
\[
\{(1,d)\mid 1\leqslant d\leqslant m-1\}.
\]
\end{theorem}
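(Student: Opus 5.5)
The plan is to split the statement into an inductive part, a deductive upper bound, and realizations. The inductive part is immediate. Lemma~\ref{LemCornishInduction} bounds the inductive rank of every Cornish variety by one. In a non-trivial Cornish variety the lattice operations are neither projections nor constants, so Theorem~\ref{ThmRankZero} makes the inductive rank exactly one. The same theorem shows that the deductive rank is positive. Hence every non-trivial Cornish variety has a pair of the form $(1,d)$ with $d\in\overline{\mathbb N}_{>0}$, which is the inclusion ``$\subseteq$'' for the whole class.

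For the reverse inclusion I would realize each pair directly. The pair $(1,1)$ comes from bounded distributive lattices with no extra operations. Every polynomial is then order-preserving, so the argument of Lemma~\ref{LemMeetDeductiveClosure} applies, or one can use formula~\eqref{EqCornishDeduction} with $F=\{1\}$; either way deduction is down-closure and has rank one. This variety is generated by the two-element lattice in a finite signature. Alternatively, $\mathcal C_2$ already has pair $(1,1)$ by Theorem~\ref{ThmCornishRanks}. For finite $d\geqslant1$, Theorem~\ref{ThmCornishRanks} gives $\operatorname{rk}(\mathcal C_{d+1})=(1,d)$, and $\mathcal C_{d+1}$ is generated by the finite algebra $A_{d+1}$ in a finite signature.

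The pair $(1,\infty)$ is the \textbf{main obstacle}: $\mathcal C_m$ is not a variety of Cornish algebras with a fixed signature as $m$ varies, so a single variety is needed. My first attempt would be a signature with one dual endomorphism $f$ and countably many endomorphisms $e_1,e_2,\ldots$. Take the product (or the generated variety) of the algebras $A_m$, interpreting $e_j$ as the identity when $j>m$. Each $A_m$ with these extra identity operations has the same polynomial operations as before, so its varieties $\mathcal C_m$ embed as subvarieties after expansion. The sharpness witnesses $\operatorname{Up}(P)$ from the proof of Theorem~\ref{ThmCornishRanks} then give deductive rank at least $m-1$ for every $m$, hence $\infty$. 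The inductive rank is still one by Lemma~\ref{LemCornishInduction}, which applies to any Cornish signature. The point to check is that padding the extra markers by identities keeps each witness inside the big variety; it does, since each witness is in $\operatorname{HSP}(A_m)$ with the expanded interpretation. If an infinite signature is undesirable, I would instead use the infinite chain version of $X_m$ with a single marker family, but the statement does not demand finiteness for the infinite pair.

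Finally, for the subvarieties of $\mathcal C_m$, passing to a subvariety cannot increase either rank. So every non-trivial subvariety has pair $(1,d)$ with $1\leqslant d\leqslant m-1$, using Theorem~\ref{ThmCornishRanks} for the upper bound and the first paragraph for the form. To realize a given $d$ with $1\leqslant d\leqslant m-1$, I would construct the poset $P$ from the sharpness proof with $n=d-1$ branches (for $d\geqslant2$). The padding argument there embeds it into a power of $A_m$, because its heights are at most $d+1\leqslant m$. Thus $\operatorname{HSP}(\operatorname{Up}(P))\subseteq\mathcal C_m$ has deductive rank at least $d$. For the matching upper bound I expect to reuse the height-stabilization argument: all points have height at most $d+1$, so profiles stabilize by step $d$, giving rank exactly $d$. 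For $d=1$, the two-element algebra with $f$ as complement and every $e_j$ as the identity works; it lies in $\mathcal C_m$ as a quotient via the root component.
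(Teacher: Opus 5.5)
Your proposal is correct. For the inductive part, the finite pairs and the pair $(1,\infty)$ it is essentially the paper's argument. The paper uses the same countable signature, with $e_j$ interpreted as the identity for $j>m$, and the same observation that this does not change polynomial operations.

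The one genuinely different step is the realization of $(1,d)$, $1\leqslant d\leqslant m-1$, among subvarieties of $\mathcal C_m$.

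\textbf{The paper's route.} It notes that $X_k\subseteq X_m$ is invariant under $\sigma$ and every marker. Hence $U\mapsto U\cap X_k$ is a surjective homomorphism from $A_m$ onto the expansion of $A_k$ in which $e_j$ is the identity for $k<j\leqslant m$. Theorem~\ref{ThmCornishRanks} applied to $\mathcal C_k$ then gives \emph{both} bounds $(1,k-1)$ at once, with no new poset. The case $k=2$ covers $d=1$.

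\textbf{Your route.} You rebuild the sharpness poset with $d-1$ branches. This gives the lower bound cleanly. However, it leaves you owing an upper bound that is uniform over all of $\operatorname{HSP}(\operatorname{Up}(P))$, not just over $\operatorname{Up}(P)$. Re-running the height argument requires two further facts:
\begin{enumerate}
\item the dual of every finite member has height at most $d+1$, which holds because such duals are invariant subspaces of equivariant images of disjoint unions of copies of $P$, where $\sigma^{d+1}$ is the root map;
\item local finiteness, which handles infinite members.
\end{enumerate}
The quickest repair is to pad the branches only to height $d+1$ rather than $m$. This places $\operatorname{Up}(P)$ in the variety generated by the quotient of $A_m$ onto the expanded $A_{d+1}$ described above, whose pair is $(1,d)$. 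That is precisely the paper's route, and it shows the branched construction is unnecessary there.

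Your $d=1$ example, restriction to the root, is also valid. Its quotient is term-equivalent to the two-element Boolean algebra, so it generates a Mal'tsev variety with pair $(1,1)$. You should say this rather than just asserting that it works.
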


\begin{proof}
Lemma~\ref{LemCornishInduction} gives inductive rank at most one.
It is not zero in a non-trivial variety: in the square of a non-trivial
algebra, the seed $\{(1,0),(0,1)\}$ acquires $(1,1)$ by the polynomial
$q(x)=x\vee(0,1)$. Deductive rank is positive by
Theorem~\ref{ThmRankZero}. Theorem~\ref{ThmCornishRanks} realizes all
the finite pairs with the stated finiteness properties.

To realize $(1,\infty)$, use the common countable signature
\[
(\wedge,\vee,0,1,f,e_1,e_2,\ldots).
\]
Expand each finite algebra $A_m$ of
Subsection~\ref{SubsecCornishRanks} to this signature by interpreting
$e_j$ as the identity whenever $j>m$. Denote the expansion by
$\widehat A_m$, and let
$\mathcal C_\infty=\operatorname{HSP}\{\widehat A_m\mid m\geqslant2\}$.
This is a variety of Cornish algebras. The variety generated by
$\widehat A_m$ is the expansion of $\mathcal C_m$ by identity
operations; in particular it has the same polynomial operations and
the same ranks, namely $(1,m-1)$. It is a subvariety of
$\mathcal C_\infty$, so the latter has no finite deductive-rank bound.
Its inductive rank is one by the first paragraph. This supplies the
remaining pair.

Finally, ranks cannot increase on passage to a subvariety, so
Theorem~\ref{ThmCornishRanks} bounds the deductive ranks in
$\mathcal C_m$ by $m-1$. For $2\leqslant k\leqslant m$, the restriction
map
\[
\mathcal P(X_m)\longrightarrow\mathcal P(X_k),\qquad
U\longmapsto U\cap X_k,
\]
is a surjective homomorphism from $A_m$ onto the expansion of $A_k$
in which $e_j$ is the identity for $k<j\leqslant m$. Indeed, $X_k$
is invariant under $\sigma$ and every marker, and the restrictions of
the first $k$ markers are precisely those defining $A_k$. The added
identity operations do not change polynomial operations. The variety
generated by this quotient is therefore a subvariety of $\mathcal C_m$
with pair $(1,k-1)$. These are all the asserted finite pairs.
\end{proof}

\subsection{Finite group actions and Cayley-graph diameters}\label{SubsecGroupActions}

Let $G$ be a finite group, with identity $1$, and let $\mathcal A_G$
be the variety of left $G$-sets with an unconstrained named constant
$c$. Its unary operations $\lambda_g$, for $g\in G$, satisfy
\[
\lambda_1(x)=x,\qquad
\lambda_g(\lambda_h(x))=\lambda_{gh}(x).
\]
No identity $\lambda_g(c)=c$ is imposed. The ranks of $\mathcal A_G$
are taken at $c$. These algebras carry an action of $G$, not a group
multiplication on their underlying sets, so the Mal'tsev rank bound
does not apply.

For $S\subseteq G$, put
\[
S^{\leqslant q}=\bigcup_{j=0}^q S^j,\qquad S^0=\{1\}.
\]
If $H=\langle S\rangle$, the \emph{positive diameter}
\[
d^+(H,S)=\min\{q\geqslant0\mid S^{\leqslant q}=H\}
\]
is the diameter of the directed Cayley graph with edges $x\to xs$,
$s\in S$. If $1\in S=S^{-1}$, write
$d(H,S)=\min\{q\geqslant0\mid S^q=H\}$ for its undirected
diameter. These are the generation-length invariants studied in
\cite{KL03,KL09}. We use their maxima over subgroups:
\begin{align*}
\Delta^+_{\rm sub}(G)
 &=\max\{d^+(H,S)\mid H\leqslant G,\ \langle S\rangle=H\},\\
\Delta_{\rm sub}(G)
 &=\max\{d(H,S)\mid H\leqslant G,\ 1\in S=S^{-1},\
                              \langle S\rangle=H\}.
\end{align*}

\begin{theorem}\label{ThmGroupInductiveDiameter}
For every finite group $G$,
\[
r_{\mathcal I}(\mathcal A_G)
=\left\lceil\log_2\bigl(1+\Delta^+_{\rm sub}(G)\bigr)\right\rceil.
\]
\end{theorem}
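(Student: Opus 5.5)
We first compute $R_I$ and then the induction stages explicitly. In a $G$-set $A$ with named element $c$, every unary polynomial is either constant or of the form $x\mapsto gx$, and every term has at most one variable that actually matters. Hence the semicongruence generated by $I\times\{c\}$ is
\[
R_I=\Delta_A\cup\{(ga,gc)\mid a\in I,\ g\in G\}.
\]
Lemma~\ref{LemA} then gives
\[
\mathcal I_c(I)=I\cup\{ga\mid a\in I,\ g\in G,\ gc\in I\}.
\]
For $c$ and a subset $J$, put $S(J)=\{g\in G\mid gc\in J\}$. Then $J\mapsto\mathcal I_c(J)$ adds $S(J)J$. We will track how both $J$ and $S(J)$ evolve.

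The first key step is the upper bound. Put $J_0=I$ and $S_0=S(I)$. We claim by induction that
\[
\indn Ik\subseteq S_0^{\leqslant 2^k-1}I,\qquad S(\indn Ik)\subseteq S_0^{\leqslant 2^k-1}\cdot\text{(stabiliser-coset data)}.
\]
More precisely, we will prove $\indn Ik\subseteq W_kI$ with $W_k=S_0^{\leqslant 2^k-1}$, which matches the bound $R^{2^k-1}I$ of Theorem~\ref{ThmC}. The doubling comes from the inclusion $S(W_kI)\subseteq W_kS_0$ modulo the stabiliser: if $gc=wa$ with $w\in W_k$ and $a\in I$, then $w^{-1}g$ sends $c$ into $I$, so $g\in wS_0$ up to elements of $\operatorname{Stab}(c)$. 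We handle this stabiliser issue by observing that $S(J)\operatorname{Stab}(c)=S(J)$, so its effect can be absorbed. The subgroup $H=\langle S_0\rangle$ then controls everything. Once $W_k\supseteq H$, the set $\indn Ik=HI$ is closed, because $S(HI)=HS_0\subseteq H$. Therefore if $2^k-1\geqslant d^+(H,S_0)$, the process has stabilised at step $k$. This gives rank at most $\lceil\log_2(1+\Delta^+_{\rm sub})\rceil$.

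For the lower bound we realize the extremal pair $(H,S)$ with $d=d^+(H,S)$ maximal. Take $A=G\sqcup\{c\}$, where the named constant is the identity $1\in G$ acting regularly, or simply $A=G$ with $c=1$. Take the seed $I=S\cup\{1\}$. Then $S(J)=J$, and the step is $J\mapsto J\cup JJ$, which is the squaring $J\mapsto J^2$ since $1\in J$. Hence
\[
\indn Ik=S^{\leqslant 2^k}.
\]
Here an off-by-one relative to the upper bound appears, so we would instead seed with $I=S$ without $1$, and check $c\in\mathcal I_c(I)$ only when forced. The careful choice is to take $A=G$, $c=1$ and $I=S$. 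Then $S(I)=S$ and $\mathcal I(I)=S\cup S^2$. Iterating gives exactly the words of length $1$ through $2^k$. If $1\notin S$, the set $S^{\leqslant 2^k}\setminus\{1\}$ issue must be handled. In that case we use $H$ acting on a coset space or adjoin an isolated fixed point, so that the $k$th stage is exactly the ball of radius $2^k-1$ in the Cayley graph. The stage then strictly grows while $2^k-1<d$, giving rank at least $\lceil\log_2(1+d)\rceil$.

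We expect the main obstacle to be this exact matching of the exponent $2^k-1$ between the lower and upper bounds. The difficulty lies in choosing the seed and the distinguished point so that the identity $1$ counts as length $0$ but is not available for free. We would first try $c$ as an extra point $x_0$ in a free orbit, with seed $I=\{x_0\}\cup Sx_0$. Then $S(I)=S\cup\{1\}$ and $\indn Ik=S^{\leqslant 2^k-1}\cdot$ (adjusted) $x_0$, and we would verify the exponent by direct computation for small $k$.
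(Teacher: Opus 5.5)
There is a genuine gap in your lower bound; the upper bound is fine.

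Your computation of $R_I$, the one-step formula, and the upper bound via $S(W_kI)\subseteq W_kS_0$ all follow the paper's argument. The stabiliser worry is vacuous: $gc=wa$ with $a\in I$ gives $w^{-1}g\in S_0$ exactly. To conclude $\mathcal I_c^k(I)=HI$ you also need the reverse inclusion $S_0^{\leqslant 2^k-1}I\subseteq\mathcal I_c^k(I)$, which you only assume. It follows by induction from $S(I_k)S(I_k)\subseteq S(I_{k+1})$, so this is a small omission.

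None of your lower-bound constructions realizes the ball of radius exactly $2^k-1$.

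\emph{Seeds containing $c$.} This covers your $I=S\cup\{1\}$ with $c=1$ and $I=\{x_0\}\cup Sx_0$ with $c=x_0$. Here $1\in S(I)$, so the stages are $S^{\leqslant 2^k}$, not $S^{\leqslant 2^k-1}$ as you guess in your last paragraph. For $G=C_2$ such a seed is already a whole orbit and gives nothing, although the formula gives rank $1$.

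\emph{The seed $I=S$ in the regular orbit.} This gives $\bigcup_{j=1}^{2^k}S^j$, which can equal $H$ as soon as $2^k=D$. For $G=C_2^2$ and $S=\{a,b\}$ one has $D=2$, yet one step fills the orbit; this is the remark following the paper's proof.

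\emph{Coset spaces and fixed points.} Passing to a coset space $G/K$ does not help. Lifting sections to right-$K$-saturated subsets of $G$ gives the same recurrence $A_{k+1}=A_k\cup A_kA_k$. An isolated fixed point $p$ contributes nothing, because $gp=p$.

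The missing idea is to separate the set of multipliers from the orbit whose growth you measure. Take two regular orbits $A=G\times\{0,1\}$, with $c=(1,0)$ and $I=(S\times\{0\})\cup\{(1,1)\}$.
\begin{itemize}
\item Then $S(I)=S$ contains no free identity.
\item The first-orbit section, which is $S(I_k)$, equals $\bigcup_{j=1}^{2^k}S^j$.
\item The marker's section $T_k$ in the second orbit satisfies $T_0=\{1\}$ and $T_{k+1}=T_k\cup S(I_k)T_k$, so $T_k=S^{\leqslant 2^k-1}$ exactly.
\end{itemize}
This differs from $H$ while $2^k-1<D$, which is the required lower bound.
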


\begin{proof}
Every non-constant polynomial on a $G$-set is a translate of one
variable. Hence, for a non-empty seed $I\subseteq A$,
\begin{equation}\label{EqGroupActionRelation}
R_I=\Delta_A\cup\{(gi,gc)\mid g\in G,\ i\in I\}.
\end{equation}
Put $S=\{g\in G\mid gc\in I\}$. Then
$\mathcal I_c(I)=I\cup SI$. If
$I_k=\mathcal I_c^k(I)$ and $S_k=\{g\mid gc\in I_k\}$, we obtain
\[
I_{k+1}=I_k\cup S_kI_k,\qquad
S_{k+1}=S_k\cup S_kS_k.
\]
Indeed, $uc=gi$ with $g\in S_k$ and $i\in I_k$ is equivalent
to $g\in S_k$ and $g^{-1}u\in S_k$. Thus no assumption on the
stabilizer of $c$ is needed. Induction gives
\begin{equation}\label{EqGroupInductionBalls}
S_k=\bigcup_{j=1}^{2^k}S^j,\qquad
I_k=S^{\leqslant 2^k-1}I.
\end{equation}
If $S$ is empty, $I$ is fixed. Otherwise let $H=\langle S\rangle$.
Once $2^k-1\geqslant d^+(H,S)$, the second formula gives
$I_k=HI$, which is fixed. This proves the upper bound.

For sharpness, choose $H,S$ attaining
$D=\Delta^+_{\rm sub}(G)>0$. Take two regular orbits
$A=G\times\{0,1\}$, with $g(h,j)=(gh,j)$, and put
\[
c=(1,0),\qquad I=(S\times\{0\})\cup\{(1,1)\}.
\]
The second-orbit section of $I_k$ is
$S^{\leqslant 2^k-1}\times\{1\}$, while its eventual section is
$H\times\{1\}$. Therefore $I_k$ is not fixed when $2^k-1<D$.
This gives the lower bound. If $D=0$, the upper bound suffices.
\end{proof}

The second orbit in this proof cannot in general be omitted.
For example, the regular action of $C_2^2$ alone has inductive
rank one, whereas $\mathcal A_{C_2^2}$ has inductive rank two.
Indeed, in the regular orbit induction is $F\mapsto F\cup(F+F)$,
which is idempotent on the four-element group. With a basis $a,b$
and a marker in a second regular orbit, the first marker section is
$\{0,a,b\}$, and $a+b$ appears only at the second step.

Deduction also has a description by word balls, but a symmetrization
is needed first. The following lemma deals in particular with
non-free actions.

\begin{lemma}\label{LemGroupDeductionBalls}
To compute $r_{\mathcal D}(\mathcal A_G)$, it suffices to use finite
unions of regular $G$-orbits. For a regular distinguished orbit,
identified with $G$ and with $c=1$, let $F$ be the seed on that
orbit and $J$ the seed outside it. Put
\[
E=\{g\in G\mid gJ\cap J\ne\varnothing\}\cup\{1\},\qquad
B=F\cup FF^{-1}\cup E,\qquad X=BB^{-1}.
\]
The distinguished-orbit sections of the deductive iterates are
\begin{equation}\label{EqGroupDeductionBalls}
F_1=B,\qquad F_n=X^{2^{n-2}}\quad(n\geqslant2).
\end{equation}
Their eventual value is $H=\langle B\rangle$. If $B\ne H$,
the exact stabilization stage is
\begin{equation}\label{EqGroupDeductionTime}
2+\left\lceil\log_2 d(H,BB^{-1})\right\rceil.
\end{equation}
If $B=H$, stabilization occurs by stage one.
\end{lemma}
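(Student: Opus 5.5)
The plan is to compute the one-step deduction operator explicitly on $G$-sets, using the description~\eqref{EqGroupActionRelation} of $R_I$, and to reduce arbitrary $G$-sets to free ones by a covering argument.

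\emph{One-step formula.} By~\eqref{EqGroupActionRelation}, $I R_I$ consists of $I$ together with the elements $gc$ such that $gi\in I$ for some $i\in I$. Put $T(I)=\{g\in G\mid gI\cap I\ne\varnothing\}$. This set contains $1$ when $I\ne\varnothing$, and it is symmetric. Then
\[
\mathcal D_c(I)=I\cup T(I)c.
\]
In particular, deduction only ever adjoins points of the orbit of $c$. All other sections of the seed stay fixed.

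\emph{Reduction to regular orbits.} Any $G$-set $A$ with named $c$ is the image of a free $G$-set $\widetilde A$, which is a disjoint union of regular orbits. I choose the cover so that one regular orbit maps onto the orbit of $c$, with $1\mapsto c$. Let $h$ denote the covering map. For $I\subseteq A$, set $\widetilde I=h^{-1}(I)$. I then check, using the one-step formula, that $h(\mathcal D_c(\widetilde I))=\mathcal D_c(I)$:
\begin{itemize}
\item The inclusion $\subseteq$ is the general homomorphism inclusion.
\item For $\supseteq$, a witness $gi\in I$ with $i\in I$ lifts: take any preimage $\tilde\imath$ of $i$. Then $g\tilde\imath$ lies over $gi$, so $g\tilde\imath\in\widetilde I$. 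Hence $g\in T(\widetilde I)$, and $g\tilde c$ lies over $gc$.
\end{itemize}
Since $h(\widetilde I)=I$, induction on $n$ gives $h(\mathcal D^n_c(\widetilde I))=\mathcal D^n_c(I)$.

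It follows that whenever stabilization holds upstairs at stage $n$, it also holds downstairs. Conversely, free $G$-sets belong to $\mathcal A_G$, so they count toward the rank. Finally, a witness of a strict step involves finitely many elements, so finitely many regular orbits suffice. I expect this reduction to be the only delicate point: one must make sure the equality survives iteration even though $\widetilde I$ is a full preimage only at stage $0$. The image identity above avoids needing full preimages later.

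\emph{Computing the sections.} Now work in a finite union of regular orbits, with the distinguished orbit identified with $G$ and $c=1$. The set $T(I)$ splits as follows:
\begin{itemize}
\item the part coming from $F$ is $\{g\mid gf=f'\}=FF^{-1}$;
\item the part coming from $J$ is $E\setminus\{1\}$ (different orbits do not interact);
\item $1$ lies in $T(I)$ since $I\ne\varnothing$.
\end{itemize}
Hence $F_1=F\cup FF^{-1}\cup E=B$, and the outside section remains $J$ forever.

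At the next step, $T(I_1)=BB^{-1}\cup E=X$, using $E\subseteq B$ and $1\in B$. Since $B\subseteq X$, this gives $F_2=X$. Inductively, each $F_n$ with $n\geqslant2$ is symmetric and contains $1$ and $E$, so $F_{n+1}=F_n\cup F_nF_n^{-1}\cup E=F_n^2$. Therefore $F_n=X^{2^{n-2}}$, proving~\eqref{EqGroupDeductionBalls}.

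\emph{Stabilization stage.} Since $1\in B$, we have $\langle X\rangle=\langle B\rangle=H$, and $X$ is symmetric and contains $1$. Thus the balls $X^q$ increase strictly until $q=d(H,X)$ and equal $H$ from then on. The eventual section is therefore $H$. If $B=H$, the iterates are constant from stage $1$. If $B\ne H$, then $F_1\ne F_2$ unless $X=H$; that is the case $d=1$, which gives stage $2$. In general, the first $n\geqslant2$ with $2^{n-2}\geqslant d(H,X)$ is $2+\lceil\log_2 d(H,BB^{-1})\rceil$, and the preceding stage is still strictly smaller. This yields~\eqref{EqGroupDeductionTime}.
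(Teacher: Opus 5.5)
Your computation on a regular distinguished orbit is correct and is essentially the paper's argument. There the collision set is $F_nF_n^{-1}\cup E$, so $F_1=B$ and $F_{n+1}=F_nF_n^{-1}$ for $n\geqslant1$, and a positive stage is fixed exactly when it equals $H$. One minor slip: if $B\ne H$, then always $F_1\ne F_2$, since $F_1=F_2$ would make $B$ closed under $xy^{-1}$ and hence a subgroup. The case $X=H$ is simply $d=1$.

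\textbf{The gap is the reduction to regular orbits.} In proving $h(\mathcal D_c(\widetilde I))\supseteq\mathcal D_c(I)$ you conclude $g\tilde\imath\in\widetilde I$ from $h(g\tilde\imath)\in I$, which uses $\widetilde I=h^{-1}(I)$. The next iterate $\mathcal D_c(\widetilde I)$ is not the full preimage of $\mathcal D_c(I)$. Let $K$ be the stabilizer of $c$. Downstairs, the orbit $Gc\cong G/K$ acquires whole cosets $gK$, and collisions between cosets $xK,yK$ are the sets $yKx^{-1}$. On the regular cover only $T\tilde c$ is added, so this $K$-twist is lost. Hence the induction giving $h(\mathcal D_c^n(\widetilde I))=\mathcal D_c^n(I)$ breaks at $n=2$, and with it the claim that stabilization upstairs implies stabilization downstairs.

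A concrete counterexample: take $G=S_3$, $K=\langle(12)\rangle$, $A=G/K\sqcup G$, $c=K$, and $I=\{1,(13)\}$ in the regular orbit.
\begin{enumerate}
\item Downstairs, $\mathcal D_c(I)=I\cup\{K,(13)K\}$. Since $(23)(13)K=(13)K$, the element $(23)$ is a collision at the next step, so $\mathcal D_c^2(I)$ contains the new coset $(23)K$.
\item Upstairs, for any regular cover with seed $h^{-1}(I)$, the first step adds $\{1,(13)\}$ to the distinguished orbit. The collision set stays $\{1,(13)\}$, so the process is already stable at stage one.
\end{enumerate}

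\textbf{The missing idea} is to build the stabilizer into the external collision set, rather than seek an image identity. Lift the sections of $G/K$ to right-$K$-saturated sets $A_n$, so that $A_1=A_0\cup A_0A_0^{-1}K\cup EK$ and $A_{n+1}=A_nA_n^{-1}K$. Then run the regular-orbit process with initial section $A_0$ and external collision set $E\cup K$. This is realizable, because every symmetric $T\ni1$ is the collision set of seeds $\{1,t\}$, $t\in T$, placed in separate regular orbits.

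Inductively the simulated sections satisfy $B_n\subseteq A_n$, and both processes have eventual value $\langle A_0\cup E\cup K\rangle$. A positive stage is fixed exactly when it equals this subgroup, so non-stabilization downstairs forces non-stabilization of the simulation. This yields only an inequality of stabilization times, which is all the rank reduction needs. In the example above, the simulation needs three steps while the original needs two.
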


\begin{proof}
Formula~\eqref{EqGroupActionRelation} shows that deduction changes
only the orbit of $c$. The external collision set is symmetric.
Adjoining $1$ to it leaves the recurrence unchanged: if $F$ is
non-empty, $1\in FF^{-1}$; otherwise the non-empty seed lies outside
the distinguished orbit, and its collision set already contains $1$.
For a regular distinguished orbit, the recurrence is
\[
F_{n+1}=F_n\cup F_nF_n^{-1}\cup E.
\]
Since $\{1\}\cup E\subseteq F_1$, it becomes
$F_{n+1}=F_nF_n^{-1}$ for $n\geqslant1$. The set
$X=F_1F_1^{-1}$ is symmetric and contains $1$, proving
\eqref{EqGroupDeductionBalls}. A set containing $1$ and closed under
$xy^{-1}$ is a subgroup, so the eventual value is $H$ and a
positive stage is fixed exactly when it equals $H$. This proves
\eqref{EqGroupDeductionTime}.

Now let the distinguished orbit be $G/K$, where $K$ need not be
normal. Lift its stage sections to right-$K$-saturated subsets
$A_n\subseteq G$. With the external collision set $E$, formula
\eqref{EqGroupActionRelation} gives
\[
A_1=A_0\cup A_0A_0^{-1}K\cup EK,\qquad
A_{n+1}=A_nA_n^{-1}K\quad(n\geqslant1).
\]
Here $K\subseteq A_1$, and the eventual value is
$H=\langle A_0\cup E\cup K\rangle$: a fixed stage containing
$K$ is a subgroup, and every stage is contained in $H$.

Every symmetric subset $T\subseteq G$ containing $1$ is an external
collision set on a finite union of regular orbits. In separate
orbits put seed pairs $\{1,t\}$, for $t\in T$; their collision
sets are $\{1,t,t^{-1}\}$ and have union $T$. We may therefore
simulate the original process on a regular distinguished orbit,
with initial section $A_0$ and external collision set $E\cup K$.
Its first section is
\[
B_1=A_0\cup A_0A_0^{-1}\cup E\cup K\subseteq A_1.
\]
Inductively $B_n\subseteq A_n$, and both eventual sections are
$H$. Whenever the original process has not stabilized at a
positive stage, the simulation has not stabilized there either.
Since deduction has rank at least one, even for the trivial group,
this proves the reduction.
\end{proof}

\begin{theorem}\label{ThmGroupDeductiveDiameter}
For every non-trivial finite group $G$,
\[
1+\left\lceil\log_2\Delta_{\rm sub}(G)\right\rceil
\ \leqslant r_{\mathcal D}(\mathcal A_G)\leqslant\
2+\left\lceil\log_2\Delta_{\rm sub}(G)\right\rceil.
\]
More precisely, its deductive rank is the maximum of $1$ and the
values in \eqref{EqGroupDeductionTime}, as
\[
B=F\cup FF^{-1}\cup E,\qquad
F\subseteq G,\quad 1\in E=E^{-1},
\]
ranges over the sets which are not subgroups.
\end{theorem}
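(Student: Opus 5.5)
By Lemma~\ref{LemGroupDeductionBalls}, $r_{\mathcal D}(\mathcal A_G)$ is the supremum of the stabilization stages over seeds in finite unions of regular orbits with a regular distinguished orbit. So the plan is to first settle the exact description and then read off the two inequalities from it.

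\emph{Exact description.} For such a seed, the lemma shows that the stage is at most $1$ when $B=H$, and equals \eqref{EqGroupDeductionTime} otherwise. Here $B=F\cup FF^{-1}\cup E$, and the lemma's proof shows that $E$ is symmetric and contains $1$. Conversely, every pair $(F,E)$ with $F\subseteq G$ and $1\in E=E^{-1}$ is realized. Put $F$ on the distinguished orbit, and for each $t\in E$ put the seed pair $\{1,t\}$ in a separate regular orbit, exactly as in the simulation step of that proof. The collision sets $\{1,t,t^{-1}\}$ have union $E$. Since deductive rank is always at least $1$ (Theorem~\ref{ThmRankZero}), the rank is the maximum of $1$ and the values \eqref{EqGroupDeductionTime} over non-subgroup sets $B$ of this form.

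\emph{Upper bound.} Since $1\in B$, we have $B\subseteq BB^{-1}$. Thus $X=BB^{-1}$ is symmetric, contains $1$, and generates $H=\langle B\rangle$. Hence $d(H,X)\leqslant\Delta_{\rm sub}(G)$, and every value \eqref{EqGroupDeductionTime} is at most $2+\lceil\log_2\Delta_{\rm sub}(G)\rceil$. Because $G$ is non-trivial, $\Delta_{\rm sub}(G)\geqslant1$, so this bound also dominates $1$.

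\emph{Lower bound.} This is the only step needing a small computation. Choose $H\leqslant G$ and $T$ with $1\in T=T^{-1}$, $\langle T\rangle=H$ and $d(H,T)=\Delta:=\Delta_{\rm sub}(G)$. If $\Delta=1$, the required bound $1$ is automatic. If $\Delta\geqslant2$, take $F=\varnothing$ and $E=T$, so that $B=T$. Then $B\ne H$, since $T^1\ne H$. Moreover $X=TT^{-1}=T^2$, and since $1\in T$ the powers $T^j$ increase with $j$. Hence $d(H,T^2)=\lceil\Delta/2\rceil$. The corresponding stage is $2+\lceil\log_2\lceil\Delta/2\rceil\rceil$.

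It remains to check that $\lceil\log_2\lceil\Delta/2\rceil\rceil=\lceil\log_2\Delta\rceil-1$ for integers $\Delta\geqslant2$. Write $2^{k-1}<\Delta\leqslant2^k$ with $k\geqslant1$. Then $2^{k-2}<\lceil\Delta/2\rceil\leqslant2^{k-1}$, where the case $k=1$ gives $\lceil\Delta/2\rceil=1$ directly. So the stage equals $1+\lceil\log_2\Delta\rceil$, which gives the lower bound.

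The main point requiring care is the realizability of arbitrary $(F,E)$, which justifies that the maximum ranges over all such $B$. This is already supplied by the orbit construction in the proof of Lemma~\ref{LemGroupDeductionBalls}.
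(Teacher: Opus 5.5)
Your proof is correct and takes essentially the same approach as the paper. The exact formula and the upper bound come from Lemma~\ref{LemGroupDeductionBalls} together with realizability of every pair $F,E$, and the lower bound uses an empty distinguished section with a diameter-attaining symmetric set realized as the external collision set. The only difference is cosmetic: the paper reads off $F_n=S^{2^{n-1}}$ directly, whereas you pass through $d(H,T^2)=\lceil\Delta/2\rceil$ and the identity $\lceil\log_2\lceil\Delta/2\rceil\rceil=\lceil\log_2\Delta\rceil-1$, which you verify correctly.
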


\begin{proof}
The lemma gives the upper bound and the exact maximum formula,
since every indicated pair $F,E$ can be realized. For the lower
bound, choose $H,S$ attaining $\Delta_{\rm sub}(G)$, with
$1\in S=S^{-1}$. Take an empty initial distinguished-orbit section
and realize $S$ as an external collision set. Then
$F_n=S^{2^{n-1}}$ for $n\geqslant1$, so stabilization first occurs
at $1+\lceil\log_2 d(H,S)\rceil$.
\end{proof}

\subsection{Finite abelian actions}\label{SubsecAbelianActions}

For finite abelian groups both ranks can be determined exactly.
Write the invariant-factor decomposition of a non-trivial group as
\[
G=C_{m_1}\oplus\cdots\oplus C_{m_t},\qquad
1<m_1\mid\cdots\mid m_t,
\]
and put
\begin{equation}\label{EqAbelianDiameterParameters}
P=\sum_{i=1}^t(m_i-1),\qquad
Q=\sum_{i=1}^t\left\lfloor\frac{m_i}{2}\right\rfloor,\qquad
e=m_t.
\end{equation}
Thus $e$ is the exponent of $G$. Klopsch and Lev prove that the
maximum positive diameter of $G$ is $P$
\cite[Theorem~2.1]{KL09}, and its maximum undirected diameter is
$Q$ \cite[Theorem~2.1]{KL03}. These are also the corresponding
maxima over subgroups. Indeed, if the invariant factors of a
subgroup $H\leqslant G$ are padded by initial ones to give $n_1,\ldots,n_t$, then
$n_i\mid m_i$ for every $i$, so neither sum increases. To see the
divisibility, compare, for each prime $p$ and $k\geqslant0$, the
dimensions of $(p^kH_p)[p]\leqslant(p^kG_p)[p]$ over $\mathbb F_p$,
where $H_p,G_p$ are the $p$-primary components and
$B[p]=\{b\in B\mid pb=0\}$. These dimensions count the invariant
factors whose $p$-exponents exceed $k$, giving the componentwise
inequalities between the ordered exponents.

\begin{theorem}\label{ThmAbelianActionRanks}
For every non-trivial finite abelian group $G$, with parameters
\eqref{EqAbelianDiameterParameters},
\begin{equation}\label{EqAbelianActionRanks}
\operatorname{rk}(\mathcal A_G)
=
\left(
\left\lceil\log_2(P+1)\right\rceil,\
1+\left\lceil\log_2\max\{e-1,Q\}\right\rceil
\right).
\end{equation}
For the trivial group, $\operatorname{rk}(\mathcal A_G)=(0,1)$.
\end{theorem}

\begin{proof}
The inductive formula follows from
Theorem~\ref{ThmGroupInductiveDiameter}. For deduction use additive
notation and the regular-orbit reduction of
Lemma~\ref{LemGroupDeductionBalls}. Thus
\[
W=F\cup(F-F)\cup E,\qquad X=W-W,\qquad H=\langle W\rangle,
\]
where $0\in E=-E$, and $F_n=2^{n-2}X$ for $n\geqslant2$.

We first prove the upper bound. If $F$ is empty, then
$F_n=2^{n-1}E$ for $n\geqslant1$, and the symmetric diameter
bound $Q$ gives stabilization by stage
$1+\lceil\log_2 Q\rceil$. Suppose $F$ is non-empty, choose
$a\in F$, and put $U=(F-F)\cup E$. The symmetric set
$T=U\cup\{a,-a\}$ generates $H$, since
$f=a+(f-a)$ for $f\in F$. Moreover,
\begin{equation}\label{EqAbelianPackingInclusions}
U+U\subseteq X,\qquad a+U\subseteq X,\qquad -a+U\subseteq X.
\end{equation}
By the undirected diameter bound, every $h\in H$ has a word in
$T$ of length at most $Q$. Commuting its letters, cancelling
opposite occurrences of $a$, and reducing the coefficient of $a$
modulo its order gives
\[
h=ka+u_1+\cdots+u_\ell,\qquad
|k|+\ell\leqslant Q,\qquad
|k|\leqslant\lfloor e/2\rfloor,\qquad u_j\in U.
\]
Reduction of the coefficient does not increase its absolute value.
Pair each remaining $a$-letter with a $U$-letter when available,
and pair the remaining $U$-letters. Since $0\in U$, the inclusions
\eqref{EqAbelianPackingInclusions} express $h$ as a sum of at most
\[
\max\left\{|k|,\left\lceil\frac{|k|+\ell}{2}\right\rceil\right\}
\leqslant
\max\left\{\left\lfloor\frac e2\right\rfloor,
           \left\lceil\frac Q2\right\rceil\right\}
\]
members of $X$.

Set $M=\max\{e-1,Q\}$ and $p=2^{\lceil\log_2M\rceil}$.
If $M\geqslant2$, then $p$ is even, and $p\geqslant e-1,Q$
implies
\[
p/2\geqslant
\max\{\lfloor e/2\rfloor,\lceil Q/2\rceil\}.
\]
Consequently $F_n=H$ by stage
$n=1+\lceil\log_2M\rceil$. The remaining case $M=1$ is
$G=C_2$. Its first section $W$ is always a subgroup, so deduction
is idempotent.

For the first lower bound, choose an element $a$ of order $e$
in a regular distinguished orbit and take the singleton seed
$\{a\}$. Its sections are
\[
F_1=\{0,a\},\qquad
F_n=\{ja\mid -2^{n-2}\leqslant j\leqslant2^{n-2}\}
\quad(n\geqslant2).
\]
For $e\geqslant3$, these first fill $\langle a\rangle$ at stage
$1+\lceil\log_2(e-1)\rceil$; for $e=2$, stage one suffices.
For the other lower bound, let $b_1,\ldots,b_t$ be a standard
invariant-factor basis and realize
$E=\{0,\pm b_1,\ldots,\pm b_t\}$ as an external collision set,
starting the distinguished orbit empty. Its symmetric diameter
is $Q$, and the sections $F_n=2^{n-1}E$ first fill $G$ at
stage $1+\lceil\log_2Q\rceil$. The maximum of these lower
bounds is the claimed value.

For the trivial group, induction fixes every set and deduction
adjoins $c$, giving $(0,1)$.
\end{proof}

The two parameters in the deductive formula have different roles.
The diameter $Q$ measures generation using a symmetric set, while
$e-1$ accounts for the initial asymmetry of a singleton in a cyclic
direction. Neither can in general be omitted. For example, $C_4$
and $C_2^2$ both have maximum undirected diameter two, but
\[
\operatorname{rk}(\mathcal A_{C_4})=(2,3),\qquad
\operatorname{rk}(\mathcal A_{C_2^2})=(2,2).
\]
For an elementary abelian group $(C_p)^t$, with $p$ prime and
$t\geqslant1$, formula~\eqref{EqAbelianActionRanks} becomes
\[
\left(
\left\lceil\log_2\bigl(1+t(p-1)\bigr)\right\rceil,\
1+\left\lceil\log_2\max\{p-1,t\lfloor p/2\rfloor\}\right\rceil
\right).
\]
In particular, $\mathcal A_{C_3^2}$ has pair $(3,2)$ and
$\mathcal A_{C_3^4}$ has pair $(4,3)$.

\begin{theorem}\label{ThmSpecialAbelianActionSpectrum}
The family $\mathscr C_{\rm ab}
=\{\mathcal A_G\mid G\text{ is a finite abelian group}\}$ has spectrum
\begin{equation}\label{EqAbelianActionSpectrum}
\operatorname{Spec}(\mathscr C_{\rm ab})
=\{(0,1)\}\cup\{(r,r)\mid r\geqslant1\}
 \cup\{(r,r+1),(r+1,r)\mid r\geqslant2\},
\end{equation}
where all parameters are finite integers.
\end{theorem}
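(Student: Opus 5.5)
Everything will follow from formula~\eqref{EqAbelianActionRanks} of Theorem~\ref{ThmAbelianActionRanks}, together with the trivial-group value $(0,1)$. Put $i=\lceil\log_2(P+1)\rceil$ and $d=1+\lceil\log_2 M\rceil$ with $M=\max\{e-1,Q\}$; for $G=C_2$ we have $M=1$ and $d=1$. The plan has two parts. First, show that $|i-d|\leqslant1$ always, and that $|i-d|=1$ forces $\min\{i,d\}\geqslant2$. Second, exhibit groups realizing every listed pair.

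\emph{Containment.} We compare $P$ with $M$. Since $m_i-1\geqslant\lfloor m_i/2\rfloor$ and $e-1=m_t-1$ is one of the summands of $P$, we get $M\leqslant P$. Also $P=\sum(m_i-1)\leqslant\sum 2\lfloor m_i/2\rfloor=2Q$, because $m-1\leqslant 2\lfloor m/2\rfloor$. Hence $M\leqslant P\leqslant 2M$. It follows that $\lceil\log_2(P+1)\rceil$ lies between $\lceil\log_2(M+1)\rceil$ and $\lceil\log_2(2M+1)\rceil=1+\lceil\log_2(M+\tfrac12)\rceil$. I then have to compare these with $d=1+\lceil\log_2M\rceil$, using integer facts such as $\lceil\log_2(M+1)\rceil\in\{\lceil\log_2M\rceil,\lceil\log_2M\rceil+1\}$ and $\lceil\log_2(2M+1)\rceil=\lceil\log_2 M\rceil+1$ or $+2$ depending on whether $M$ is a power of two. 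Together these give $d-1\leqslant i\leqslant d+1$. For $i=d+1$ we need $P+1>2M$ with $M$ a power of two, so $P=2M$; then $M\geqslant1$ gives $d\geqslant1$, and I must show $d=1$ is impossible. Indeed $d=1$ means $M=1$, so $G=C_2^t$ with $t\leqslant1$, hence $G=C_2$, $P=1$ and $i=1$. For $i=d-1$ we need $d\geqslant2$, so the case $r=1$ never arises. The main obstacle is this careful integer bookkeeping with ceilings of logarithms, especially the edge cases where $M$ is a power of two.

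\emph{Realization.} The pair $(0,1)$ comes from the trivial group. For the diagonal pairs $(r,r)$, take $G=C_2^t$. Then $P=t$ and $Q=t$, so $M=t$ when $t\geqslant1$ (the paper's example $C_2^2$ gives $(2,2)$). Choosing $t=2^{r-1}$ yields $i=\lceil\log_2(2^{r-1}+1)\rceil=r$ and $d=1+(r-1)=r$, and $r=1$ is $C_2$. For $(r,r+1)$ with $r\geqslant2$, use the cyclic group $C_e$ with $e=2^{r}$: here $P=e-1$, $Q=e/2$, $M=e-1$, so $i=r$ and $d=1+r$; $C_4$ is the case $r=2$. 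For $(r+1,r)$ with $r\geqslant2$, take $G=C_3^t$, where $P=2t$, $Q=t$ and $M=\max\{2,t\}$. With $t=2^{r-1}$ and $r\geqslant2$ we get $M=t$, $d=r$ and $i=\lceil\log_2(2^r+1)\rceil=r+1$; this is consistent with $C_3^2\mapsto(3,2)$. I still have to check these three small computations explicitly.
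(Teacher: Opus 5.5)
Your strategy is the paper's. You use $M\leqslant P\leqslant 2M$ to get $|i-d|\leqslant1$, rule out unequal pairs with a coordinate $1$, and realize the remaining pairs with explicit groups. Your realizations are correct. The diagonal witnesses $C_2^{2^{r-1}}$ replace the paper's $C_{2^{r-1}+1}$ and work equally well, and the families $C_{2^r}$ and $C_3^{2^{r-1}}$ are exactly the paper's.

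There is one step that does not prove what you need. In the case $i=d-1$ you write ``we need $d\geqslant2$, so the case $r=1$ never arises.'' But $d\geqslant2$ only gives $i\geqslant1$. It does not exclude the pair $(1,2)$, that is $i=1$, $d=2$; there you need $i\geqslant2$.

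The fix mirrors your argument for $i=d+1$. If $i=1$, then $P+1\leqslant2$. Since $P=\sum_j(m_j-1)\leqslant1$ forces $G=C_2$, whose pair is $(1,1)$, the pair $(1,2)$ cannot occur. Alternatively, $i=\lceil\log_2M\rceil$ requires $M\leqslant P<2^{\lceil\log_2M\rceil}$. Then $M$ is not a power of two, so $M\geqslant3$ and $d\geqslant3$.

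With this added, your containment argument gives the paper's observation that either coordinate equals one only for $G=C_2$.
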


\begin{proof}
For non-trivial $G$, put $M=\max\{e-1,Q\}$. The parameters above satisfy $M\leqslant P
\leqslant2M$. Formula~\eqref{EqAbelianActionRanks} therefore implies
that the rank coordinates differ by at most one. Either coordinate
equals one only when $G=C_2$, whose pair is $(1,1)$.
The trivial group gives $(0,1)$, and the following groups realize
all the other pairs:
\[
\begin{array}{c|c|c}
\text{pair}&G&\text{range}\\ \hline
(r,r)&C_{\,2^{r-1}+1}&r\geqslant1\\
(r,r+1)&C_{\,2^r}&r\geqslant2\\
(r+1,r)&(C_3)^{\,2^{r-1}}&r\geqslant2.
\end{array}
\]
\end{proof}

\subsection{Finite cyclic actions}\label{SubsecCyclicSpectrum}

For $m\geqslant1$, let $\mathcal V_m$ have signature $(f,0)$ and
defining identity $f^m(x)=x$, where $f$ is unary. This is
term-equivalent to $\mathcal A_{C_m}$: the group elements act
by the powers of $f$, and the named constant need not be fixed.
Thus the cyclic spectrum is a consequence of the preceding results.

\begin{corollary}\label{ThmSpecialCyclicSpectrum}
For $m\geqslant2$,
\[
\operatorname{rk}(\mathcal V_m)
=\bigl(\lceil\log_2m\rceil,\,
       1+\lceil\log_2(m-1)\rceil\bigr).
\]
Consequently the family $\mathscr C_{\rm cyc}
=\{\mathcal V_m\mid m\geqslant2\}$ has spectrum
\[
\operatorname{Spec}(\mathscr C_{\rm cyc})
=\{(r,r)\mid r\geqslant1\}
 \cup\{(r,r+1)\mid r\geqslant2\},
\]
where $r$ ranges over integers. Including $\mathcal V_1$ adds
the pair $(0,1)$.
\end{corollary}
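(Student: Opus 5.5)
The plan is to reduce to Theorem~\ref{ThmAbelianActionRanks} via term-equivalence, then compute the parameters for $G=C_m$. First we make the term-equivalence explicit. In $\mathcal V_m$ define $\lambda_{k}(x)=f^{k}(x)$ for $k\in\mathbb Z/m$, which is well defined by $f^m(x)=x$. In $\mathcal A_{C_m}$, with generator $g$, define $f=\lambda_g$. These translations are mutually inverse on terms and preserve the named constant. Since neither variety imposes $\lambda_g(c)=c$ or $f(0)=0$, the clones of polynomial operations on corresponding algebras coincide, so $R_I$, $\mathcal I$ and $\mathcal D$ coincide, and $\operatorname{rk}(\mathcal V_m)=\operatorname{rk}(\mathcal A_{C_m})$.

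Next we evaluate \eqref{EqAbelianDiameterParameters} for $G=C_m$ with $m\geqslant2$. There is a single invariant factor $m_1=m$, so $P=m-1$, $Q=\lfloor m/2\rfloor$ and $e=m$. Then $\lceil\log_2(P+1)\rceil=\lceil\log_2 m\rceil$. We have $\max\{e-1,Q\}=m-1$, because $\lfloor m/2\rfloor\leqslant m-1$ for $m\geqslant2$. Hence \eqref{EqAbelianActionRanks} gives the displayed pair. For $m=1$, the trivial-group clause of Theorem~\ref{ThmAbelianActionRanks} gives $(0,1)$. This can also be seen directly: $f$ is the identity, so Theorem~\ref{ThmRankZero} applies.

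For the spectrum, set $r=\lceil\log_2 m\rceil\geqslant1$, so $2^{r-1}<m\leqslant2^r$. The deductive coordinate is $1+\lceil\log_2(m-1)\rceil$, and we split into two cases.

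\begin{itemize}
\item If $2^{r-1}+1<m$, then $2^{r-1}<m-1\leqslant 2^r-1$, so $\lceil\log_2(m-1)\rceil=r$. The pair is $(r,r+1)$, and $r\geqslant2$ because such an $m$ exists only when $2^r>2^{r-1}+1$.
\item If $m=2^{r-1}+1$, then $m-1=2^{r-1}$, so the pair is $(r,r)$. This holds for every $r\geqslant1$, with $m=2$ giving $(1,1)$.
\end{itemize}

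Both families are attained. The pair $(r,r)$ comes from $m=2^{r-1}+1$, and the pair $(r,r+1)$ comes from $m=2^r$ for $r\geqslant2$, matching the table in the proof of Theorem~\ref{ThmSpecialAbelianActionSpectrum}.

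The only step needing care is the term-equivalence claim. Ranks depend only on the polynomial clone together with the named element, and both are preserved here, since the constant is not required to be fixed in either variety. The rest is arithmetic with ceilings.
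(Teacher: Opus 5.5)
Your proposal is correct and follows essentially the same route as the paper: the term-equivalence $\mathcal V_m\simeq\mathcal A_{C_m}$ (with the named constant not required to be fixed), then the specialisation $P=e-1=m-1$, $Q=\lfloor m/2\rfloor$ in Theorem~\ref{ThmAbelianActionRanks}, followed by the case split $m=2^{r-1}+1$ versus $2^{r-1}+1<m\leqslant 2^r$ and the witnesses $m=2^{r-1}+1$ and $m=2^r$. You spell out details the paper leaves implicit, namely why $\max\{e-1,Q\}=m-1$, why $r\geqslant2$ in the second case, and that ranks depend only on the term clone together with the named element.
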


\begin{proof}
In Theorem~\ref{ThmAbelianActionRanks}, take
$P=e-1=m-1$ and $Q=\lfloor m/2\rfloor$. Put
$r=\lceil\log_2m\rceil$. The deductive rank is $r$ when
$m=2^{r-1}+1$ and is $r+1$ for
$2^{r-1}+1<m\leqslant2^r$. The choices
$m=2^{r-1}+1$ and $m=2^r$ give the displayed spectrum.
\end{proof}

Although both ranks are unbounded as the group varies, every
finite-group action variety has finite ranks. The spectra above
therefore contain no pair with an infinite coordinate.

\subsection{Ranks at arbitrary elements}\label{SubsecUniversalRanks}

The constructions of Section~\ref{SecClosures} use a chosen element
of an algebra, whether or not it is named by a constant. Thus, for an
arbitrary variety $V$, we may define its \emph{universal inductive rank}
$r_{\mathcal I}^{\mathrm{univ}}(V)$ to be the least non-negative integer $n$
such that
\[
\mathcal I_a^n(I)=\mathcal I_a^{n+1}(I)
\]
for every $A\in V$, every $a\in A$, and every non-empty subset
$I\subseteq A$. Its \emph{universal deductive rank}
$r_{\mathcal D}^{\mathrm{univ}}(V)$ is defined similarly. As before, the
value is $\infty$ when no finite uniform bound exists. Write
\[
\operatorname{urk}(V)
=\bigl(r_{\mathcal I}^{\mathrm{univ}}(V),
       r_{\mathcal D}^{\mathrm{univ}}(V)\bigr).
\]

Let $c$ be a fresh constant symbol. Denote by $V[c]$ the variety in the
enlarged signature axiomatized by the identities defining $V$, with
no additional identities imposed. Its algebras are precisely the
expansions $(A,a)$ with $A\in V$ and $a\in A$, interpreting $c$ as $a$.
The original signature of $V$ may already contain other constants.
For example, if $\mathcal G_G$ denotes the variety of $G$-sets
without a named element, then
$\mathcal A_G=\mathcal G_G[c]$. In particular, the cyclic-action
variety $\mathcal V_m$ is obtained by adjoining $0$ to the unary
variety defined by $f^m(x)=x$.

\begin{theorem}\label{ThmUniversalConstantReduction}
For every variety $V$,
\[
\operatorname{urk}(V)=\operatorname{rk}(V[c]),
\]
where the ranks on the right are taken at $c$.
Consequently, if $\mathscr E$ is the class of all such varieties
$V[c]$, with $c$ distinguished, then
\[
\{\operatorname{urk}(V)\mid V\text{ is a variety}\}
=\operatorname{Spec}(\mathscr E).
\]
\end{theorem}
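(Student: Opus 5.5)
The plan is to show that the two rank notions quantify over the same data, and that every one-step operator computes the same set in the two settings. By the description of $V[c]$, its algebras are exactly the pairs $(A,a)$ with $A\in V$ and $a\in A$, with $c$ interpreted as $a$. So it suffices to prove that for every such pair and every $I\subseteq A$,
\[
\mathcal I_c^{(A,a)}(I)=\mathcal I_a^{A}(I),\qquad
\mathcal D_c^{(A,a)}(I)=\mathcal D_a^{A}(I).
\]
Here the left side is computed in the expansion $(A,a)$ at the constant $c$, and the right side is computed in $A$ at the chosen element $a$, exactly as in Section~\ref{SecClosures}.

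The key step is to compare the semicongruences. I will show that the relation $R_I$ generated by $I\times\{a\}$ is the same in $A$ and in $(A,a)$. A relation on $(A,a)$ is compatible precisely when it is compatible with the operations of $A$ and contains $(a,a)$. Every reflexive relation contains $(a,a)$, so semicongruences of $A$ and of $(A,a)$ coincide, and hence so does the least one containing $I\times\{a\}$. An alternative route uses Lemmas~\ref{LemA} and~\ref{LemB}: terms of $V[c]$ with parameters from $A$ give the same polynomial maps as terms of $V$ with parameters, since an occurrence of $c$ is just the parameter $a$. This is the remark already made in the proof of Theorem~\ref{ThmA}, that naming an element as a constant does not change the polynomial maps. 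Since $\mathcal I$ and $\mathcal D$ are defined as $R_II$ and $IR_I$, the one-step operators agree. By induction on $n$, all iterates agree as well.

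With this in hand, the inequality $\mathcal I^n=\mathcal I^{n+1}$ holds for all $(A,a,I)$ with $A\in V$ exactly when it holds for all non-empty $I$ in all algebras of $V[c]$ at $c$. The same is true for deduction. Hence the least such $n$ coincides, or fails to exist in both cases, and this gives $\operatorname{urk}(V)=\operatorname{rk}(V[c])$. The spectrum statement then follows immediately: the set of universal rank pairs is $\{\operatorname{rk}(V[c])\}$, which by definition is $\operatorname{Spec}(\mathscr E)$.

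The only delicate point is bookkeeping rather than mathematics. We must check that the correspondence between algebras of $V[c]$ and pairs $(A,a)$ is surjective, which holds because no identities involving $c$ are imposed. We must also check that existing constants of $V$ cause no trouble, since they are interpreted identically on both sides. I expect no real obstacle beyond stating the compatibility argument cleanly.
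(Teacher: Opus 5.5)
Your proposal is correct and is essentially the paper's argument. The paper notes that $A$ and $(A,a)$ have the same polynomial operations, so induction and deduction at $a$ and at $c$ agree step by step, and then compares the uniform bounds, including the values $0$ and $\infty$. Your main argument is a minor variant of this: semicongruences coincide because a reflexive relation automatically preserves the nullary operation $c$, which gives the same step-by-step agreement just as directly.
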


\begin{proof}
An algebra $A$ and its expansion $(A,a)$ have the same polynomial
operations: every occurrence of $c$ can be replaced by the parameter
$a$. Hence induction and deduction at $a$ in $A$ agree, step by step,
with induction and deduction at $c$ in $(A,a)$. Since $a$ ranges over
all elements of all algebras of $V$, each finite uniform bound on
either side is equivalent to the same bound on the other side.
The ranks therefore agree, including the values $0$ and $\infty$.
The spectrum identity follows.
\end{proof}

In particular, $\operatorname{urk}(\mathcal G_G)
=\operatorname{rk}(\mathcal A_G)$. Thus
Theorems~\ref{ThmGroupInductiveDiameter} and~\ref{ThmGroupDeductiveDiameter}
describe the universal ranks of finite-group action varieties,
and Theorem~\ref{ThmAbelianActionRanks} computes them exactly when
$G$ is abelian. Their spectrum as $G$ ranges over finite abelian
groups is precisely \eqref{EqAbelianActionSpectrum}.

The universal rank spectrum problem is a special rank spectrum
problem for varieties obtained by adjoining an unconstrained constant.
It would be of interest to determine this spectrum. The general rank
spectrum theorem does not settle it: that theorem allows identities
involving the distinguished constant, whereas here the distinguished
constant may name any element. Nor does adjoining such a constant
assert that the resulting category of algebras is pointed.

The special spectra established above are collected in Table~\ref{TabSpecialSpectra}.

\begin{table}[H]
\centering
\small
\setlength{\tabcolsep}{5pt}
\renewcommand{\arraystretch}{1.0}
\caption{Special rank spectra. The parameters $N\geqslant0$ and
$m\geqslant2$ are fixed in the two subvariety rows; $r$ is a finite
integer in the action rows.}
\label{TabSpecialSpectra}
\begin{tabular}{@{}p{0.43\textwidth}p{0.39\textwidth}l@{}}
\toprule
Class of non-trivial varieties & Spectrum & Result\\
\midrule
Mal'tsev varieties
 & $\{(1,1)\}$ & \ref{ThmSpecialSubtractiveSpectrum}\\
$0$-subtractive varieties with a J\'onsson--Tarski term for $0$
 & $\{(1,1)\}$ & \ref{ThmSpecialSubtractiveSpectrum}\\
$0$-subtractive varieties
 & $\{(1,1),(2,2)\}$ & \ref{ThmSpecialSubtractiveSpectrum}\\
Monotone meet-semilattice expansions, greatest element $0$
 & $\{(i,1)\mid i\in\overline{\mathbb N}_{>0}\}$ & \ref{ThmSpecialMeetSpectrum}\\
Subvarieties of $\mathcal Q_N$
 & $\{(i,1)\mid 1\leqslant i\leqslant N+1\}$ & \ref{ThmSpecialMeetSpectrum}\\
Cornish varieties, distinguished bottom
 & $\{(1,d)\mid d\in\overline{\mathbb N}_{>0}\}$ & \ref{ThmSpecialCornishSpectrum}\\
Subvarieties of $\mathcal C_m$
 & $\{(1,d)\mid 1\leqslant d\leqslant m-1\}$ & \ref{ThmSpecialCornishSpectrum}\\
Finite abelian-action family $\mathscr C_{\rm ab}$
 & $\{(0,1)\}$\newline
   $\cup\{(r,r)\mid r\geqslant1\}$\newline
   $\cup\{(r,r+1)\mid r\geqslant2\}$\newline
   $\cup\{(r+1,r)\mid r\geqslant2\}$
 & \ref{ThmSpecialAbelianActionSpectrum}\\
Finite cyclic-action family $\mathscr C_{\rm cyc}$
 & $\{(r,r)\mid r\geqslant1\}$\newline
   $\cup\{(r,r+1)\mid r\geqslant2\}$
 & \ref{ThmSpecialCyclicSpectrum}\\
\bottomrule
\end{tabular}
\end{table}

\section{Detecting structural properties}\label{SecDetection}

The preceding sections ask how many steps induction and deduction
require. We now ask what properties of these operations can tell us
about the algebras themselves. Some of the following tests concern a
particular subset of an algebra or its square; others must hold
throughout a variety. We distinguish a fixed named constant from a
basepoint allowed to range over all elements. As before, equality of
operators means equality on every non-empty subset of every algebra
under consideration.

\subsection{Subtractivity and agreement of the two processes}
\label{SubsecDetSubtractivity}

The symmetry argument preceding Theorem~\ref{ThmK} shows that
subtractivity makes the one-step operators equal. Its converse gives
the following characterisation.

\begin{theorem}\label{ThmDetSubtractivity}
For a variety $V$ with a distinguished constant $0$, the following
conditions are equivalent:
\begin{enumerate}
\item $V$ is subtractive;
\item $\mathcal I_0=\mathcal D_0$;
\item $0\in\mathcal I_0(\{a\})$ for every $a$ in every algebra of $V$.
\end{enumerate}
\end{theorem}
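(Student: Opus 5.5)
We will prove the cycle (1)$\Rightarrow$(2)$\Rightarrow$(3)$\Rightarrow$(1).

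For (1)$\Rightarrow$(2) we reuse the symmetry observation made after Theorem~\ref{ThmC}. If $s$ is a subtraction term, then $(a,0)\in R_I$ yields $(0,a)=s((a,a),(a,0))\in R_I$. By minimality, $R_I$ is therefore symmetric, and hence $\mathcal I_0(I)=R_II=IR_I=\mathcal D_0(I)$ for every $I$.

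For (2)$\Rightarrow$(3), take the seed $\{a\}$. Since $(a,0)$ is a generating pair of $R_{\{a\}}$, we have $0\in\{a\}R_{\{a\}}=\mathcal D_0(\{a\})$. By (2), this set equals $\mathcal I_0(\{a\})$.

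The substance lies in (3)$\Rightarrow$(1), which we handle with the usual free-algebra argument. Let $F$ be the free algebra of $V$ on two generators $x,y$; it is convenient to use $x$ as the parameter generator and $y$ as the seed. Apply (3) in $F$ to $a=y$. By Lemma~\ref{LemA}, $0\in\mathcal I_0(\{y\})$ means there are a term $t$, a tuple $\vec p$ of parameters from $F$, and a number of seed variables such that
\[
t(\vec p,y,\ldots,y)=0\quad\text{and}\quad t(\vec p,0,\ldots,0)=y .
\]
Rename this as a polynomial $g(z)=t(\vec p,z,\ldots,z)$ with $g(y)=0$ and $g(0)=y$. Each parameter in $\vec p$ is a term in $x,y$, so $g(z)$ is a term $u(x,y,z)$ satisfying $V\models u(x,y,y)=0$ and $V\models u(x,y,0)=y$.

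The main obstacle is that the parameters may involve $y$ itself, so these identities only relate the second argument to the substituted one. We therefore do not need $x$ at all and can work in the free algebra on one generator $y$. The parameters are then terms in $y$, and we obtain a binary term $w(y,z)$ with $w(y,y)=0$ and $w(y,0)=y$. Setting $s(y,z)=w(y,z)$ gives $s(y,y)=0$ and $s(y,0)=y$, which is exactly a subtraction term, so $V$ is subtractive.

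The only care needed is to justify that a polynomial of the free algebra on $\{y\}$ evaluated at $0$ or at $y$ corresponds to an identity of $V$. This follows from freeness, since equal elements of $F(y)$ are equal terms modulo $V$.
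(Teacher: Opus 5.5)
Your proposal is correct and follows essentially the same route as the paper: for (1)$\Rightarrow$(2) the subtraction term makes $R_I$ symmetric; for (2)$\Rightarrow$(3) you use $0\in\mathcal D_0(\{a\})$; and for (3)$\Rightarrow$(1) you take a unary polynomial $g$ on the one-generated free algebra with $g(y)=0$ and $g(0)=y$, whose parameters are terms in $y$, which yields the binary subtraction term. The initial detour through a second generator $x$ is unnecessary but harmless, and the ``obstacle'' you mention is not really one: the parameters depending on $y$ is exactly what supplies the first argument of $s$.
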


\begin{proof}
If $s$ is a subtractive term and $a\in J$, applying $s$ to the pairs
$(a,a)$ and $(a,0)$ in $R_J$ gives $(0,a)\in R_J$. Hence $R_J$
contains the converse of each of its generators and is symmetric.
Thus $\mathcal I_0(J)=R_JJ=JR_J=\mathcal D_0(J)$, proving
(1)$\Rightarrow$(2). Since deduction from a non-empty set always
contains $0$, (2) implies (3). For the converse,
work in the free algebra $F_V(x)$. The singleton polynomial description
of (3) supplies a unary polynomial $q$ with $q(x)=0$ and $q(0)=x$.
Its parameters are terms in $x$; replacing the polynomial variable by
$y$ therefore gives a binary term $s(x,y)$ satisfying
$s(x,x)=0$ and $s(x,0)=x$. This is a subtractive term.
\end{proof}

Agreement of the eventual closures is weaker than agreement in one
step. Recall that $V$ is \emph{$n$-subtractive}, for $n\geqslant2$,
if it has binary terms $s_1,\ldots,s_{n-1}$ satisfying
\begin{equation}\label{EqDetNSubtractive}
s_1(x,x)=0,\qquad
s_i(x,0)=s_{i+1}(x,x)\ (1\leqslant i<n-1),\qquad
s_{n-1}(x,0)=x.
\end{equation}
Thus $2$-subtractivity is ordinary subtractivity. We use the convention
of \cite[Definition 5.6]{DetCS13}.

\begin{theorem}\label{ThmDetEventualSubtractivity}
For a variety $V$ with a distinguished constant $0$, the following
conditions are equivalent:
\begin{enumerate}
\item $V$ is $n$-subtractive for some finite $n\geqslant2$;
\item every compatible preorder $P$ on every algebra in $V$ satisfies
      $aP0\Longleftrightarrow0Pa$;
\item the non-empty $0$-inductive sets and the non-empty
      $0$-deductive sets coincide;
\item $\mathcal I_0^\infty=\mathcal D_0^\infty$;
\item $0\in\mathcal I_0^\infty(\{a\})$ for every $a$ in every
      algebra of $V$.
\end{enumerate}
When these conditions hold, the common closure is normal closure.
They are also equivalent to requiring that, in every compatible
partial order on every algebra in $V$, the element $0$ be comparable
only with itself.
\end{theorem}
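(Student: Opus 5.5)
I would prove the cycle (1)$\Rightarrow$(2)$\Rightarrow$(3)$\Rightarrow$(4)$\Rightarrow$(5)$\Rightarrow$(1), then deduce the partial-order reformulation and the statement about normal closure. The whole argument runs through the free algebra $F_V(x)$ and the compatible-preorder description of inductive sets in Theorem~\ref{PropPreorder}.

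\emph{(1)$\Rightarrow$(2).} Let $P$ be a compatible preorder with $aP0$. Applying $s_1$ to $(a,a)$ and $(a,0)$ gives $0=s_1(a,a)\,P\,s_1(a,0)$. Next, $s_1(a,0)=s_2(a,a)\,P\,s_2(a,0)$, and so on, until $s_{n-1}(a,0)=a$. Transitivity then gives $0Pa$. The converse direction is symmetric: from $0Pa$, apply the terms to $(a,a)$ and $(0,a)$ and read the chain in the reverse order.

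\emph{(2)$\Rightarrow$(3).} Let $I$ be non-empty and inductive. Theorem~\ref{PropPreorder} applies once $0\in I$; to see this, take $a\in I$ and the preorder $P=\bigcup_n R_I^n$. Since $aP0$, condition (2) gives $0Pa$, so $0\in R_I^nI=I$. Now $I=P0$ with $P$ compatible. By (2), $P0=0P$. Moreover $R_I\subseteq P$, so $IR_I\subseteq 0PP=0P=I$, and $I$ is deductive.

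For the reverse inclusion, let $I$ be non-empty and deductive, so that $0\in I$. Put $P=\bigcup_n R_I^n$. Deductivity gives $IP\subseteq I$, and by (2) we have $0P=P0\supseteq I$. I expect to need $PI\subseteq I$. Given $x\,P\,i$ with $i\in I$, we have $iP0$, hence $xP0$, hence $0Px$ by (2), so $x\in 0P\subseteq IP\subseteq I$.

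\emph{(3)$\Rightarrow$(4).} Each closure is the least set of its kind containing the seed, so equal classes of fixed sets give equal closures. (The empty seed is trivial.)

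\emph{(4)$\Rightarrow$(5).} This is immediate, since deduction from a non-empty set contains $0$.

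\emph{(5)$\Rightarrow$(1).} I expect this to be the main step. By Theorem~\ref{ThmC}, $0\in R^k\{x\}$ for some $k$ in $F_V(x)$, where $R=R_{\{x\}}$. So there is a chain
\[
0=u_0\,R\,u_1\,R\cdots R\,u_k=x.
\]
Each pair in $R$ has the form $(t(\vec p,x),t(\vec p,0))$ with parameters $\vec p$ that are terms in $x$; reflexive steps can be dropped. Replacing the polynomial variable by $y$ turns the $j$-th pair into a binary term $s_j(x,y)$ with $s_j(x,x)=u_{j-1}$ and $s_j(x,0)=u_j$. This yields exactly the identities~\eqref{EqDetNSubtractive} with $n=k+1$. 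The delicate point is keeping the two occurrences of $x$ distinct, exactly as in Theorem~\ref{ThmDetSubtractivity}.

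\emph{Normal closure.} Once the common closure is both inductive and deductive, Theorem~\ref{ThmA} shows it is normal. It is the least such set, so it is the normal closure.

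\emph{Partial orders.} If (2) holds and $P$ is a compatible partial order with $aP0$, then also $0Pa$, so antisymmetry forces $a=0$. For the converse, I would run (5)$\Rightarrow$(1) differently. In $F_V(x)$, take the compatible preorder $P$ generated by $(x,0)$ and pass to its antisymmetric quotient, the congruence $P\cap P^\circ$. This quotient carries a compatible partial order in which $[x]\le[0]$, so the hypothesis forces $[x]=[0]$. Hence $0Px$, which gives $0\in\mathcal I_0^\infty(\{x\})$ via $P=\bigcup R^n$. By freeness this yields (5).
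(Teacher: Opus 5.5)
Your proposal is correct and follows essentially the paper's proof. It uses the same chain argument for (1)$\Rightarrow$(2), the same extraction of the terms $s_j$ from an $R$-path in $F_V(x)$ for (5)$\Rightarrow$(1), and the same quotient by $P\cap P^\circ$ for the partial-order reformulation. The differences are only cosmetic: in (2)$\Rightarrow$(3) you check the two inclusions directly from $P0=0P$, rather than identifying both kinds of set with $[0]_{P\cap P^\circ}$ and invoking Theorem~\ref{ThmA}, and you apply the partial-order quotient only to the preorder generated by $(x,0)$ in $F_V(x)$, landing in (5) rather than in (2). You should also note, as the paper does, that a path with $k=0$ (or only reflexive steps, once dropped) forces $0=x$ in $F_V(x)$, so $V$ is trivial and hence subtractive.
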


\begin{proof}
If $aP0$, compatibility and \eqref{EqDetNSubtractive} give a chain
from $0$ to $a$; if $0Pa$, the same chain is reversed. Transitivity
proves (1)$\Rightarrow$(2).

Assume (2), let $J$ be non-empty, and let $P$ be the transitive
closure of $R_J$. If $J$ is inductive, Theorem~\ref{ThmC} gives
$PJ=J$. For $a\in J$ we have $aP0$, hence $0Pa$, so $0\in J$.
Consequently $J=P0=0P$. If $J$ is deductive, then $0\in J$ and
$JP=J$. The generating pairs and (2) again give $J=0P=P0$.
In either case $J=P0=0P$ is the class of $0$ for the congruence
$P\cap P^\circ$. It is therefore normal, and hence both inductive
and deductive by Theorem~\ref{ThmA}. This proves (3).

The fixed sets determine the corresponding least closures, so (3)
and (4) are equivalent. Condition (4) implies (5), since every
non-empty deductive closure contains $0$.

Finally assume (5) in $F_V(x)$, and let $R$ be the semicongruence
generated by $(x,0)$. Theorem~\ref{ThmC} gives a finite path
\[
0=u_0\mathrel R u_1\mathrel R\cdots\mathrel R u_k=x.
\]
In a non-trivial variety $k\geqslant1$. Every edge has the form
$(q_i(x),q_i(0))$ for a unary polynomial with parameters in $F_V(x)$.
Writing these parameters as terms in $x$ gives binary terms
$s_i(x,y)$ with $s_i(x,x)=u_{i-1}$ and $s_i(x,0)=u_i$.
These satisfy \eqref{EqDetNSubtractive} with $n=k+1$.
The trivial variety is subtractive, completing the equivalence.

For the last assertion, (2) plainly implies the partial-order
condition. Conversely, quotient a compatible preorder $P$ by the
congruence $P\cap P^\circ$. The induced relation is a compatible
partial order. If $aP0$ or $0Pa$, the images of $a$ and $0$ are
comparable and hence equal, which gives both relations.
This is the order-theoretic equivalence of
\cite[Theorem 5.9]{DetCS13}.
\end{proof}

\begin{corollary}\label{CorDetSubtractiveRankOne}
If $V$ is $n$-subtractive for some finite $n$ and has inductive
rank at most one, then $V$ is subtractive and both ranks are at
most one. If $V$ is non-trivial, its rank pair is $(1,1)$.
\end{corollary}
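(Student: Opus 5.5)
The plan is to show that, under the two hypotheses, condition (3) of Theorem~\ref{ThmDetSubtractivity} holds. We need $0\in\mathcal I_0(\{a\})$ for every element $a$ of every algebra $A\in V$. Then $V$ is subtractive, and the rank bounds follow from the earlier results.

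First, fix $a\in A$ and put $I=\{a\}$. Since $V$ is $n$-subtractive, Theorem~\ref{ThmDetEventualSubtractivity}, via condition (5), gives $0\in\mathcal I_0^\infty(\{a\})$. Inductive rank at most one means $\mathcal I_0^1(J)=\mathcal I_0^2(J)$ for every non-empty $J$. Iterating this, every later stage equals the first, so $\mathcal I_0^\infty(\{a\})=\mathcal I_0(\{a\})$. Hence $0\in\mathcal I_0(\{a\})$, and Theorem~\ref{ThmDetSubtractivity} shows that $V$ is subtractive.

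Next, with $V$ subtractive, Theorem~\ref{ThmDetSubtractivity} gives $\mathcal I_0=\mathcal D_0$ as operators. Their iterates therefore coincide step by step, so the deductive rank equals the inductive rank. In particular it is at most one.

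For the final sentence, suppose $V$ is non-trivial and pick $a\neq0$ in some algebra. Then $\mathcal D_0(\{a\})$ contains $0$ and so differs from $\{a\}$. Thus the deductive rank is exactly one, and because the two operators agree, so is the inductive rank. Alternatively, Theorem~\ref{ThmSpecialSubtractiveSpectrum} gives the same conclusion.

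No step is a serious obstacle. The only point needing care is the use of condition (5) of Theorem~\ref{ThmDetEventualSubtractivity} together with the fact that rank one collapses $\mathcal I_0^\infty$ to $\mathcal I_0$. The collapse is legitimate because the rank condition is quantified over all non-empty seeds, and $\{a\}$ is such a seed.
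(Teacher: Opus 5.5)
Your proposal is correct and follows the paper's proof. You combine condition (5) of Theorem~\ref{ThmDetEventualSubtractivity} with the rank bound to get $0\in\mathcal I_0^\infty(\{a\})=\mathcal I_0(\{a\})$; Theorem~\ref{ThmDetSubtractivity} then gives subtractivity and $\mathcal I_0=\mathcal D_0$, and non-triviality makes both ranks exactly one.
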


\begin{proof}
By Theorem~\ref{ThmDetEventualSubtractivity} and the rank bound,
$0\in\mathcal I_0^\infty(\{a\})=\mathcal I_0(\{a\})$.
Apply Theorem~\ref{ThmDetSubtractivity}. Equality of the operators
gives the same deductive bound, and deduction is nontrivial in
every non-trivial variety.
\end{proof}

\subsection{Permutability and universal agreement}
\label{SubsecDetPermutability}

When the basepoint is arbitrary, the corresponding tests detect
congruence permutability rather than subtractivity.

\begin{theorem}\label{ThmDetUniversalAgreement}
For an arbitrary variety $V$ the following conditions are equivalent:
\begin{enumerate}
\item $V$ is a Mal'tsev variety;
\item $\mathcal I_a=\mathcal D_a$ for every basepoint $a$ in every
      algebra of $V$;
\item $a\in\mathcal I_a(\{b\})$ for every pair $a,b$ in every
      algebra of $V$.
\end{enumerate}
The following weaker conditions are also equivalent:
\begin{enumerate}[label=(\roman*)]
\item $V$ is congruence $n$-permutable for some finite $n\geqslant2$;
\item $\mathcal I_a^\infty=\mathcal D_a^\infty$ for every
      basepoint $a$ in every algebra of $V$;
\item $a\in\mathcal I_a^\infty(\{b\})$ for every pair $a,b$ in
      every algebra of $V$.
\end{enumerate}
Under the latter conditions the common closure is normal closure.
\end{theorem}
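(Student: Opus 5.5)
The plan is to reduce both parts to the pointed results of Subsections~\ref{SubsecDetSubtractivity} via Theorem~\ref{ThmUniversalConstantReduction}, that is, by working in $V[c]$. Induction and deduction at a basepoint $a$ in $A$ agree step by step with induction and deduction at $c$ in the expansion $(A,a)$. Conditions (2) and (3) for $V$ are therefore exactly conditions (2) and (3) of Theorem~\ref{ThmDetSubtractivity} for $V[c]$ with $c$ distinguished. Similarly, (ii) and (iii) are conditions (4) and (5) of Theorem~\ref{ThmDetEventualSubtractivity} for $V[c]$.

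The key step is then a syntactic translation: $V[c]$ is $c$-subtractive if and only if $V$ is Mal'tsev, and $V[c]$ is $n$-subtractive at $c$ if and only if $V$ is congruence $n$-permutable.

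\begin{itemize}
\item Given a Mal'tsev term $p$, the term $s(x,y)=p(x,y,c)$ is a subtraction for $c$.
\item Conversely, suppose $s(x,y)$ is a binary term of $V[c]$ with $s(x,x)=c$ and $s(x,c)=x$. Write it as $t(x,y,c)$ with $t$ a term of $V$. Since $V[c]$ imposes no identities on $c$, these identities hold with $c$ replaced by a fresh variable $z$. This is the usual substitution argument in the free algebra $F_{V[c]}(x,y)\cong F_V(x,y,z)$. We obtain $t(x,x,z)=z$ and $t(x,z,z)=x$, so $t$ is a Mal'tsev term.
\item In the same way, $n$-subtractive terms $s_i(x,y)=t_i(x,y,c)$ for $c$ become, after renaming $c$ to $z$, the identities $t_1(x,x,z)=z$, $t_i(x,z,z)=t_{i+1}(x,x,z)$ and $t_{n-1}(x,z,z)=x$.
\item Setting $p_i=t_{n-i}(z,y,x)$, or simply reading the chain in reverse, gives Hagemann--Mitschke terms as in \eqref{EqHMIdentities}.
\item The converse is the same substitution $z:=c$.
\end{itemize}

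I expect the main obstacle to be the freeness claim, namely that $F_{V[c]}(X)$ is $F_V(X\cup\{z\})$ with $c$ interpreted as $z$. I would verify it directly from the description of algebras of $V[c]$ as pairs $(A,a)$: a $V$-homomorphism out of $F_V(X\cup\{z\})$ sending $z$ to $a$ is exactly a $V[c]$-homomorphism. With this in hand, equivalences (1)$\Leftrightarrow$(2)$\Leftrightarrow$(3) and (i)$\Leftrightarrow$(ii)$\Leftrightarrow$(iii) follow from the two cited theorems applied to $V[c]$.

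For the final sentence, Theorem~\ref{ThmDetEventualSubtractivity} already identifies the common closure with normal closure at $c$ in $(A,a)$. Congruences of $(A,a)$ and of $A$ coincide, so this is normal closure at $a$ in $A$. Alternatively, one can invoke Theorem~\ref{ThmPermutableRank} directly in the permutable case.
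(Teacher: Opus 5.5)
Your reduction is correct, but the paper argues directly rather than through $V[c]$. It gets (1)$\Rightarrow$(2) and (i)$\Rightarrow$(ii), including the normal-closure claim, from Theorems~\ref{ThmMalcevRank} and~\ref{ThmPermutableRank} applied at an arbitrary basepoint. For the converses it works in $F_V(x,y)$ with basepoint $x$ and seed $\{y\}$. It uses Theorem~\ref{ThmC} to turn (iii) into a path $x=u_0\mathrel R u_1\mathrel R\cdots\mathrel R u_k=y$, and reads Hagemann--Mitschke terms $p_i(x,z,y)$ off the edges $(q_i(y),q_i(x))$; under (3) one has $k=1$, which gives a Mal'tsev term.

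Since $F_V(x,y)$ with basepoint $x$ is just $F_{V[c]}(y)$ with $c=x$, this is the same free-algebra path argument that sits inside Theorems~\ref{ThmDetSubtractivity} and~\ref{ThmDetEventualSubtractivity}. Your packaging makes the universal result a formal corollary of the pointed ones, in the spirit of Theorem~\ref{ThmUniversalConstantReduction}. The price is the dictionary between ($n$-)subtractivity of $V[c]$ and the Mal'tsev or $n$-permutability property of $V$. The direct route needs no such translation, because the Hagemann--Mitschke identities appear already in the right order. For (i)$\Rightarrow$(ii) it also gives explicit stabilisation at stage $n-1$, whereas your route goes through the compatible-preorder argument of Theorem~\ref{ThmDetEventualSubtractivity}.

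The freeness point you single out as the main obstacle is not really one. The algebras of $V[c]$ are exactly the pairs $(A,a)$. So an identity involving $c$ holds in $V[c]$ if and only if the identity obtained by replacing $c$ with a fresh variable holds in $V$.

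There is one bookkeeping slip. The explicit choice $p_i(x,y,z)=t_{n-i}(z,y,x)$ reverses the chain twice. It gives $p_1(x,y,y)=t_{n-1}(y,y,x)=t_{n-2}(y,x,x)$, which need not equal $x$ once $n\geqslant3$. Either reversal alone works:
\begin{itemize}
\item $p_i(x,y,z)=t_{n-i}(x,y,z)$, which is your ``reading the chain in reverse'';
\item $p_i(x,y,z)=t_i(z,y,x)$.
\end{itemize}
With this fixed, the converse substitution, for instance $s_i(x,y)=p_{n-i}(x,y,c)$, also checks out.
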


\begin{proof}
Theorem~\ref{ThmMalcevRank} gives (1)$\Rightarrow$(2), and
deduction of $\{b\}$ contains $a$, giving (2)$\Rightarrow$(3).
Similarly, Theorem~\ref{ThmPermutableRank}, applied at any chosen
basepoint, gives (i)$\Rightarrow$(ii) and the assertion about normal
closure; (ii)$\Rightarrow$(iii) is immediate.

For both converses, work in $F_V(x,y)$ with basepoint $x$ and seed
$\{y\}$. Let $R$ be the semicongruence generated by $(y,x)$.
Condition (iii) and Theorem~\ref{ThmC} give a finite path
\[
x=u_0\mathrel R u_1\mathrel R\cdots\mathrel R u_k=y.
\]
Every edge is $(q_i(y),q_i(x))$ with parameters which are terms
in $x,y$. Denote by $p_i(x,z,y)$ the term obtained by replacing
the polynomial variable with $z$. Then
\[
p_1(x,y,y)=x,\qquad
p_i(x,x,y)=p_{i+1}(x,y,y),\qquad
p_k(x,x,y)=y.
\]
These are the Hagemann--Mitschke identities
\eqref{EqHMIdentities}, giving $(k+1)$-permutability
\cite{HM73}. Under (3) the path can be taken with $k=1$, and
$p_1$ is a Mal'tsev term. Trivial varieties satisfy both conclusions.
\end{proof}

By Hagemann's order-theoretic characterisation, the equivalent
conditions (i)--(iii) also say that every compatible preorder is
symmetric, or equivalently that every compatible partial order on
every algebra in $V$ is equality; see
\cite[Theorem 5.1]{DetCS13} and \cite{JRV14}. Thus universal
agreement of the eventual closures detects the absence of
nontrivial compatible orders throughout a variety.

\begin{corollary}\label{CorDetPermutableRankOne}
Among varieties which are congruence $n$-permutable for some finite
$n$, universal inductive rank at most one characterises Mal'tsev
varieties. Every non-trivial such variety has universal rank
pair $(1,1)$.
\end{corollary}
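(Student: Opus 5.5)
The plan is to run the permutable case of Theorem~\ref{ThmDetUniversalAgreement}, with ``universal inductive rank at most one'' playing the role that condition (3) plays there. The template is Corollary~\ref{CorDetSubtractiveRankOne}, now with an arbitrary basepoint.

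\emph{First sentence.} Let $V$ be congruence $n$-permutable. Suppose its universal inductive rank is at most one, so $\mathcal I_a(J)=\mathcal I_a^\infty(J)$ for every basepoint $a$ and non-empty seed $J$ in every algebra of $V$. By the implication (i)$\Rightarrow$(iii) of Theorem~\ref{ThmDetUniversalAgreement}, $a\in\mathcal I_a^\infty(\{b\})$ for all $a,b$. The rank bound then gives $a\in\mathcal I_a(\{b\})$, which is condition (3), so $V$ is Mal'tsev.

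For this step I would verify directly that the Mal'tsev term produced by the proof of Theorem~\ref{ThmDetUniversalAgreement} is legitimate. Work in $F_V(x,y)$ with basepoint $x$ and seed $\{y\}$. Membership $x\in\mathcal I_x(\{y\})$ means that $x\,R\,y$ for the semicongruence $R$ generated by $(y,x)$. Such a pair has the form $(q(y),q(x))$ for a unary polynomial $q$; the path has length one since $y$ is the anchor. Turning the parameters of $q$ into terms gives $p(x,z,y)$ with $p(x,y,y)=x$ and $p(x,x,y)=y$.

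Conversely, a Mal'tsev variety is $2$-permutable. Corollary~\ref{CorB}, or Theorem~\ref{ThmMalcevRank} applied at every basepoint, gives $\mathcal I_a=\mathcal I_a^\infty$, so its universal inductive rank is at most one. Alternatively, combine Theorem~\ref{ThmUniversalConstantReduction} with Corollary~\ref{CorB} for $V[c]$, which is again Mal'tsev.

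\emph{Second sentence.} For a non-trivial Mal'tsev variety $V$, Theorem~\ref{ThmUniversalConstantReduction} gives $\operatorname{urk}(V)=\operatorname{rk}(V[c])$. Here $V[c]$ is a non-trivial Mal'tsev variety with a named constant, so Theorem~\ref{PropStandardRanks} gives the pair $(1,1)$.

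The only delicate point is exactness, namely that neither universal rank is zero. I would check this directly. Pick $a\ne b$. Deduction at $a$ of $\{b\}$ adjoins $a$. Induction at $a$ of $\{b\}$ also adjoins $a$, via the polynomial $g(z)=p(b,z,a)$, which satisfies $g(a)=b$ and $g(b)=a$.

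I expect no serious obstacle. The main care is in keeping track of which basepoint and seed are used when invoking Theorem~\ref{ThmDetUniversalAgreement}. The rank bound must be applied to the specific seed $\{y\}$ at basepoint $x$ in the free algebra, and that algebra indeed belongs to $V$.
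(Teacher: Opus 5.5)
Your proposal is correct and follows the paper's proof. In both, $n$-permutability gives the eventual membership in Theorem~\ref{ThmDetUniversalAgreement}(iii), the rank bound collapses it to one step, and this yields a Mal'tsev term; the converse comes from Theorem~\ref{ThmMalcevRank}, and exactness follows because both operators at $a$ add $a$ to a seed $\{b\}$ with $b\ne a$. Your additional route to the pair $(1,1)$ through Theorem~\ref{ThmUniversalConstantReduction} and Theorem~\ref{PropStandardRanks} is also valid, but it is not needed once you have the direct check.
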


\begin{proof}
The eventual singleton membership in
Theorem~\ref{ThmDetUniversalAgreement}(iii) becomes one-step
membership under the rank bound. The first equivalence in that
theorem gives a Mal'tsev term. The converse follows from
Theorem~\ref{ThmMalcevRank}. In a non-trivial variety, choose
$a\ne b$ in an algebra. Both operators at $a$ add $a$ to $\{b\}$,
so neither universal rank is zero.
\end{proof}

\subsection{Detecting distributivity in lattices}\label{DetSubsecLattices}

In this subsection, a lattice has its usual operations, together with
any specified constants, but no additional operations of positive arity.
The first characterisation concerns individual lattices and a fixed
basepoint; the second concerns universal ranks of varieties.

\begin{theorem}\label{DetThmLatticeSingletons}
For a bounded lattice $L$, the following conditions are equivalent:
\begin{enumerate}
\item $L$ is distributive;
\item every singleton $\{a\}$ is $0$-inductive;
\item $\mathcal D_0(\{a\})$ is normal for every $a\in L$.
\end{enumerate}
Here $0$ is the least element of $L$.
\end{theorem}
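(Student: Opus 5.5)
The plan is to compute $\mathcal D_0(\{a\})$ exactly in every bounded lattice and to reduce non-distributivity to one explicit unary polynomial, via the $M_3$--$N_5$ theorem. Both non-trivial implications then become short checks using Lemma~\ref{PropUnary} and Theorem~\ref{ThmA}.

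\emph{Step 1: $\mathcal D_0(\{a\})={\downarrow}a$ in any bounded lattice.} Every lattice polynomial is order-preserving. So if $h$ is a unary polynomial with $h(a)=a$, then $h(0)\leqslant h(a)=a$. Conversely, for $c\leqslant a$ the polynomial $h(w)=w\vee c$ has $h(a)=a$ and $h(0)=c$. By the unary description of Lemma~\ref{LemB} (the one-variable case already produces all of ${\downarrow}a$, and the multivariate case gives nothing more by monotonicity), this gives $\mathcal D_0(\{a\})={\downarrow}a$.

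\emph{Step 2: the key polynomial.} Fix $a,u,v$ with $u\wedge v\leqslant a$ and put
\[
g(w)=a\vee\bigl(u\wedge(w\vee v)\bigr).
\]
Then $g(0)=a$ and $g(a)=a\vee(u\wedge(a\vee v))\geqslant a$. If $L$ is not distributive, it contains a copy of $M_3$ or $N_5$, and we choose the parameters from it.
\begin{itemize}
\item In $N_5$, with $o<p<q<i$ and $r$ a complement of both $p$ and $q$, take $a=p$, $u=q$, $v=r$. Then $u\wedge v=o\leqslant p$ and $g(p)=p\vee(q\wedge i)=q\neq p$.
\item In $M_3$, with atoms $x,y,z$, take $a=x$, $u=y$, $v=z$. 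Then $u\wedge v=0_{M_3}\leqslant x$ and $g(x)=x\vee(y\wedge 1_{M_3})=1_{M_3}\neq x$.
\end{itemize}
In both cases $g(a)>a$ strictly. The joins and meets are computed inside the sublattice, and the constant $0$ of $L$ lies below its bottom, so $g(0)=a\vee(u\wedge v)=a$ still holds in $L$.

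\emph{(2)$\Rightarrow$(1).} If $L$ is not distributive, the polynomial $g$ satisfies $g(0)=a\in\{a\}$ but $g(a)\notin\{a\}$. By Lemma~\ref{PropUnary}, $\{a\}$ is not $0$-inductive.

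\emph{(3)$\Rightarrow$(1).} By (3), ${\downarrow}a$ is the class of $0$ for some congruence $\theta$. Since $0\,\theta\, a$, we get $a=g(0)\,\theta\, g(a)$, so $g(a)\in[0]_\theta={\downarrow}a$. This contradicts $g(a)>a$.

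\emph{(1)$\Rightarrow$(2).} In a distributive lattice, every unary polynomial can be written as $g(z)=(z\wedge b)\vee c$, absorbing constants by taking $b=1$ or $c=0$ as needed. If $g(0)=c=a$, then $g(a)=(a\wedge b)\vee a=a$. So $\{a\}$ is $0$-inductive by Lemma~\ref{PropUnary}.

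\emph{(1)$\Rightarrow$(3).} The relation defined by $x\,\theta\, y$ iff $x\vee a=y\vee a$ is a lattice congruence by distributivity, and it respects the constants. Its class of $0$ is ${\downarrow}a=\mathcal D_0(\{a\})$ by Step 1.

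The only step needing care is the choice of $g$ in Step 2, in particular the side condition $u\wedge v\leqslant a$ that forces $g(0)=a$. Once that is checked on $M_3$ and $N_5$, Step 1 turns both conditions (2) and (3) into the single inequality $g(a)>a$.
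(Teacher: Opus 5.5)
Your proof is correct and follows essentially the paper's route: the same computation $\mathcal D_0(\{a\})={\downarrow}a$, the normal form $(z\wedge b)\vee c$ of unary polynomials for (1)$\Rightarrow$(2), the congruence $x\vee a=y\vee a$ for (1)$\Rightarrow$(3), and $M_3$/$N_5$ for the converses. The differences are minor. You use the single polynomial $a\vee(u\wedge(w\vee v))$ for both forbidden sublattices, whereas the paper uses it for $N_5$ but $a\vee((z\vee b)\wedge(z\vee c))$ for $M_3$. You close the cycle with (3)$\Rightarrow$(1) via compatibility of $\theta$ with $g$, whereas the paper proves (3)$\Rightarrow$(2) directly from inductivity of the normal set ${\downarrow}a$ together with monotonicity. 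Also, in your normal form a constant polynomial needs $b=0$, not $b=1$ or $c=0$.
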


\begin{proof}
Identifying all the variables of a polynomial gives
\[
\mathcal I_0(\{a\})
=\{p(a)\mid p\text{ is a unary polynomial},\ p(0)=a\}.
\]
If $L$ is distributive, every unary polynomial has the form
$p(z)=p(0)\vee(z\wedge p(1))$; see
\cite[Propositions 8 and 10]{DetCM12}. Thus $p(0)=a$ implies $p(a)=a$,
proving (1)$\Rightarrow$(2).

For the converse, recall the Dedekind--Birkhoff characterisation by
the forbidden sublattices $M_3,N_5$; see
\cite[Theorem 1]{DetLS17}. In a copy of $M_3$, write $o,t$ for the local
bounds and $a,b,c$ for its three atoms. The polynomial
\[
p(z)=a\vee\bigl((z\vee b)\wedge(z\vee c)\bigr)
\]
satisfies $p(0)=a$ and $p(a)=t\ne a$. In a copy of $N_5$, write
$o<a<b<t$ for its four-element chain and $c$ for the remaining element.
Then
\[
p(z)=a\vee\bigl(b\wedge(z\vee c)\bigr)
\]
satisfies $p(0)=a$ and $p(a)=b\ne a$. These calculations use the global
least element $0$, even when $o>0$. Either sublattice therefore
contradicts (2).

In every bounded lattice,
\begin{equation}\label{DetEqLatticeDeductionSingleton}
\mathcal D_0(\{a\})=\mathord\downarrow a.
\end{equation}
Indeed, monotonicity gives the inclusion from left to right; for
$b\leqslant a$, the polynomial $p(z)=z\vee b$ gives the reverse
inclusion. If $L$ is distributive, the map $x\mapsto x\vee a$ is a
lattice homomorphism and has $\mathord\downarrow a$ as the class of
$0$ in its kernel. Named constants are interpreted by their images in
the codomain. Hence (1) implies (3). Finally, if
$\mathord\downarrow a$ is normal and $p(0)=a$, its inductivity gives
$p(a)\leqslant a$, while monotonicity gives $a=p(0)\leqslant p(a)$.
This proves (3)$\Rightarrow$(2).
\end{proof}

The singleton condition is weaker than requiring inductive rank zero:
larger seeds can still grow. The following result shows that a uniform
one-step bound, when required at every basepoint throughout a variety,
also detects distributivity.

\begin{theorem}\label{DetThmLatticeUniversalRank}
Let $V$ be a variety of lattices, possibly with named constants. Then
\[
r_{\mathcal I}^{\mathrm{univ}}(V)\leqslant1
\quad\Longleftrightarrow\quad
V\text{ is distributive}.
\]
Every non-trivial such distributive variety has
$\operatorname{urk}(V)=(1,1)$.
\end{theorem}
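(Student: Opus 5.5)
The plan is to prove the two implications separately, and then to read off the rank pair from the structure obtained in the distributive case.

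\emph{Distributive $\Rightarrow$ universal rank at most one.} Fix an algebra $L$ in a distributive variety $V$, a basepoint $a\in L$, and a non-empty seed $I$. By Theorem~\ref{ThmUniversalConstantReduction} we may regard $a$ as a named constant. The strategy copies Lemma~\ref{LemCornishInduction}: describe $R_I$ explicitly as a transitive relation that depends only on data unchanged by one inductive step. Let $J$ be the lattice ideal and $F$ the lattice filter generated by $I\cup\{a\}$. The candidate is
\[
T=\{(s,t)\mid s\leqslant t\vee j,\ t\wedge h\leqslant s\ \text{ for some } j\in J,\ h\in F\}.
\]

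\begin{enumerate}
\item $T$ is reflexive and contains $I\times\{a\}$.
\item $T$ is compatible with $\wedge$ and $\vee$; this is where distributivity is used.
\item $T$ is transitive: join the ideal witnesses and meet the filter witnesses.
\end{enumerate}

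This gives $R_I\subseteq T$. For the reverse inclusion I would use the distributive normal form of polynomials quoted in Theorem~\ref{DetThmLatticeSingletons}. I would combine it with a lattice polynomial of the form $(s\wedge X)\vee(t\wedge Y)\vee(s\wedge t)$, evaluated at suitable pairs built from joins and meets of generators $(i,a)$. If exact equality with $T$ fails, the fallback suffices: show that $R_I$ is itself transitive, and that $R_{\mathcal I_a(I)}=R_I$.

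Granting this, note that every element of $\mathcal I_a(I)=R_II$ lies in $J$ and in $F$. Hence $I\cup\{a\}$ and $\mathcal I_a(I)\cup\{a\}$ generate the same ideal and filter, so the relation does not change. Transitivity then gives $\mathcal I_a^2(I)=\mathcal I_a(I)$. The same transitivity gives $IR_I=IR_IR_{I R_I}$, so $\mathcal D_a$ is also idempotent. Thus both universal ranks are at most one. In a non-trivial variety, pick $a\neq b$: both operators at $a$ adjoin $a$ to $\{b\}$, via the polynomial $z\vee a$ or $z\wedge a$ according to the comparison of $a$ and $b$. Hence $\operatorname{urk}(V)=(1,1)$.

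\emph{Non-distributive $\Rightarrow$ universal inductive rank at least two.} By Dedekind--Birkhoff, some $L\in V$ contains a copy of $M_3$ or $N_5$. Take the local bottom $o$ of that copy as basepoint. As in the proof of Theorem~\ref{DetThmLatticeSingletons}, the polynomials used there, evaluated with $o$ in place of $0$, already show growth in one step: for $M_3$, $p(z)=a\vee((z\vee b)\wedge(z\vee c))$ sends $o\mapsto a$ and $a\mapsto t$. What is needed is a seed $I$, probably a two-element set inside the copy and possibly placed in $L^2$, with $\mathcal I_o^2(I)\neq\mathcal I_o(I)$. The idea is to find an element whose derivation needs an anchor, such as $t$ or the middle element $b$ of $N_5$, that is only created at the first step.

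\emph{Main obstacle.} I expect this construction of an explicit two-step witness to be the hardest part. To compute $\mathcal I_o(I)$ exactly, I would use the fact that only lattice polynomials with parameters from the finite copy matter, after a retraction argument or by working in the sublattice generated by the copy together with the constants. If a single copy does not suffice, I would try seeds in $L^2$ with one coordinate fixed at $o$, mimicking the product trick of Example~\ref{ExProductNotMaxima}.
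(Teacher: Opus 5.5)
\paragraph{Distributive direction.} Your distributive argument rests on a wrong description of $R_I$. Your relation $T$ omits the constraint that ties a pair to the basepoint. By distributivity, $x\preceq_a y\iff x\wedge a\leqslant y\leqslant x\vee a$ is a compatible preorder containing $I\times\{a\}$. Hence every $(s,t)\in R_I$ satisfies $s\wedge a\leqslant t\leqslant s\vee a$, and members of $T$ need not. Concretely, take the chain $0<1$ with $a=0$ and $I=\{1\}$. Then $J=F=L$, so $T=L^2$. But $R_I$ is contained in the compatible relation $\geqslant$, so $(0,1)\notin R_I$ and $TI=\{0,1\}\neq\{1\}=\mathcal I_0(I)$. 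As a mere upper bound, $T$ gives no idempotence, and your fallback only restates the goal.

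The repair, which is the paper's route, is to use $T\cap{\preceq_a}$. For finite $I$ this equals $\operatorname{Cg}(I\times\{a\})\cap{\preceq_a}$. Put $\ell=a\wedge\bigwedge I$ and $h=a\vee\bigvee I$; both $(\ell,a)$ and $(h,a)$ lie in $R_I$. The reverse inclusion comes from evaluating $(x\wedge V)\vee(y\wedge U)\vee(x\wedge y)$ at $(U,V)=(\ell,h)$, which gives $x$, and at $(a,a)$, which gives $y$. After that, your observation that one step changes neither the generated ideal nor the generated filter does give idempotence of both operators.

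Your lower bound for induction is also wrong. Induction at $a$ cannot adjoin $a$ to $\{b\}$ when $a<b$, since $p(a)=b$ forces $p(b)\geqslant b$. If it always could, $V$ would be Mal'tsev by Theorem~\ref{ThmDetUniversalAgreement}. Use a square instead: for $u<v$, take basepoint $(u,u)$, seed $\{(u,u),(v,v)\}$ and the polynomial $z\wedge(u,v)$, which produces $(u,v)$.

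\paragraph{Non-distributive direction.} For the converse you have not produced a witness, and that is the whole content of this direction. The polynomials from Theorem~\ref{DetThmLatticeSingletons} only show that a singleton is not inductive, i.e.\ rank at least one. Your concern about computing $\mathcal I_o(I)$ exactly in $L$ is justified. Your remedies do not remove it in general: the copy need not be a retract of $L$, and the subalgebra it generates must contain the named constants.

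The missing idea is that you need only an upper bound on the first step. Monotonicity gives one uniformly in all parameters if you work in the subalgebra $B=\{(u,v)\mid u\leqslant v\}$ of $L^2$. Take a basepoint $\alpha=(d,e)\in B$ and a seed $S\subseteq B$ with $d\leqslant\pi_2(s)$ for all $s\in S$. Then every $w\in\mathcal I_\alpha(S)$ satisfies $\pi_1(s)\leqslant\pi_2(w)$ for some $s\in S$; compare the first coordinate of the anchor with the second coordinate of $w$.

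For $M_3$, with local bounds $o,t$ and atoms $a,b,c$, take $\alpha=(a,t)$ and $S=\{(a,a),(a,t),(b,t)\}$. The first step contains $(a,t)\wedge(b,t)=(o,t)$. Then $(z\wedge(o,b))\vee(z\wedge(o,c))$ sends $\alpha$ to $(o,t)$ and $(a,a)$ to $(o,o)$. So $(o,o)$ appears at the second step, but not at the first, because neither $a$ nor $b$ lies below $o$.

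For $N_5$, with $o<a<b<t$ and remaining element $c$, take $\alpha=(a,t)$ and $S=\{(b,b),(b,t),(c,t)\}$. The polynomial $(z\vee(c,t))\wedge(b,t)$ gives $(o,t)$ at the first step. Then $(z\wedge(o,a))\vee(z\wedge(o,c))$ gives $(o,a)$ at the second step, and the barrier excludes it from the first step since neither $b$ nor $c$ lies below $a$.
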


\begin{proof}
First let $L$ be distributive, fix $a\in L$, and let $I\subseteq L$
be non-empty. Put
\[
x\preceq_a y\quad\Longleftrightarrow\quad
x\wedge a\leqslant y\leqslant x\vee a,
\qquad
\theta_I=\operatorname{Cg}_L(I\times\{a\}).
\]
Distributivity shows that $\preceq_a$ is reflexive, transitive and
compatible. We claim that the semicongruence generated by
$I\times\{a\}$ is
\begin{equation}\label{DetEqDistributiveStar}
R_{I,a}=\theta_I\cap\preceq_a.
\end{equation}

For finite $I$, put $\ell=a\wedge\bigwedge I$ and
$h=a\vee\bigvee I$. Both $(\ell,a)$ and $(h,a)$ belong to $R_{I,a}$,
and the polynomial $(s\vee U)\wedge V$ recovers $(s,a)$ from these two
pairs, for each $s\in I$. Thus they generate $R_{I,a}$, and
$\theta_I=\operatorname{Cg}_L(\ell,h)$. Moreover,
\begin{equation}\label{DetEqDistributivePrincipalCongruence}
x\mathrel{\theta_I}y
\quad\Longleftrightarrow\quad
x\wedge\ell=y\wedge\ell\ \text{ and }\ x\vee h=y\vee h.
\end{equation}
To verify this, the right-hand side is the kernel of a lattice
homomorphism and contains $(\ell,h)$. Conversely, in the quotient by
$\operatorname{Cg}_L(\ell,h)$, denote the common image of $\ell,h$ by
$c$. Equality of both meets and both joins with $c$ forces the two
images to be equal, since in a distributive lattice
\[
x=x\wedge(y\vee c)
=(x\wedge y)\vee(x\wedge c)
=(x\wedge y)\vee(y\wedge c)=y.
\]
Now $R_{I,a}\subseteq\theta_I\cap\preceq_a$ follows by compatibility.
For the reverse inclusion, suppose $x\mathrel{\theta_I}y$ and
$x\preceq_a y$. The polynomial
\[
p(U,V)=(x\wedge V)\vee(y\wedge U)\vee(x\wedge y)
\]
satisfies $p(\ell,h)=x$ by
\eqref{DetEqDistributivePrincipalCongruence}, and $p(a,a)=y$ by the
inequalities defining $\preceq_a$. This proves
\eqref{DetEqDistributiveStar} for finite $I$. The general case follows
by taking directed unions over finite non-empty subsets of $I$.

Write $R=R_{I,a}$. Both $RI$ and $IR$ contain $I$ and lie in the
$\theta_I$-class of $a$. Replacing $I$ by either set therefore leaves
$\theta_I$ unchanged, and \eqref{DetEqDistributiveStar} shows that it
also leaves $R$ unchanged. Transitivity now gives
\[
\mathcal I_a^2(I)=R^2I=RI,
\qquad
\mathcal D_a^2(I)=IR^2=IR.
\]

For the converse, let $L\in V$ be non-distributive. Its order relation
\[
B=\{(u,v)\in L^2\mid u\leqslant v\}
\]
is a subalgebra of $L^2$, including any named constants. We use the
following one-step obstruction. If $\alpha=(d,e)\in B$ and
$T\subseteq B$ satisfies $d\leqslant\pi_2(t)$ for every $t\in T$,
then
\begin{equation}\label{DetEqLatticeSquareBarrier}
w\in\mathcal I_\alpha^B(T)
\quad\Longrightarrow\quad
\pi_1(t)\leqslant\pi_2(w)\text{ for some }t\in T.
\end{equation}
Indeed, write $w=q(t_1,\ldots,t_k)$ with
$q(\alpha,\ldots,\alpha)=t\in T$. Compare the first coordinate of
this anchor with the second coordinate of $w$. All variable inputs
increase from $d$ to $\pi_2(t_i)$, and the first coordinate of each
polynomial parameter is at most its second. Monotonicity proves
\eqref{DetEqLatticeSquareBarrier}.

If $L$ contains $M_3$, retain the notation $o,t,a,b,c$ from the
preceding proof and take
\[
\alpha=(a,t),\qquad
T=\{(a,a),(a,t),(b,t)\}.
\]
The first induction contains $(o,t)$, by meeting $(a,t)$ and $(b,t)$.
The polynomial
\[
p(z)=(z\wedge(o,b))\vee(z\wedge(o,c))
\]
has $p(\alpha)=(o,t)$ and $p(a,a)=(o,o)$, so the second induction
contains $(o,o)$. The first does not: neither $a$ nor $b$ is at most
$o$, contradicting \eqref{DetEqLatticeSquareBarrier}.

If $L$ contains $N_5$, with $o<a<b<t$ and remaining element $c$, take
\[
\alpha=(a,t),\qquad
T=\{(b,b),(b,t),(c,t)\}.
\]
The polynomial $q(z)=(z\vee(c,t))\wedge(b,t)$ has
$q(\alpha)=(b,t)\in T$ and $q(c,t)=(o,t)$, so $(o,t)$ belongs to
the first induction. Now
\[
p(z)=(z\wedge(o,a))\vee(z\wedge(o,c))
\]
has $p(\alpha)=(o,t)$ and $p(b,b)=(o,a)$. Thus $(o,a)$ belongs to
the second induction, but not to the first, since neither $b$ nor
$c$ is at most $a$. Again \eqref{DetEqLatticeSquareBarrier} applies.
The forbidden-sublattice criterion therefore proves that
$r_{\mathcal I}^{\mathrm{univ}}(V)\geqslant2$ whenever $V$ is not
distributive. The bounds $o,t$ here are local to the sublattice;
the argument takes place in the full algebra $B$, so no assumption
about named constants in these sublattices is needed.

Finally, in a non-trivial lattice choose $\ell<h$. In its square,
induction at $(\ell,\ell)$ from
$\{(\ell,\ell),(h,h)\}$ adds $(\ell,h)$ by meeting with this
parameter. Hence the universal inductive rank is not zero.
Deduction from a singleton different from the basepoint adds that
basepoint, so the universal deductive rank is not zero either.
\end{proof}

\subsection{Semirings, monoids and cancellation}\label{SubsecDetectMonoids}

The results in this subsection concern the usual semiring and monoid
signatures. Additional basic operations can change the polynomial maps
and therefore the conditions below.

\begin{theorem}\label{ThmDetectRings}
For a semiring $S$ with multiplicative identity $1$, the following
conditions are equivalent:
\begin{enumerate}
\item $S$ is a ring;
\item $0\in\mathcal I_0(\{1\})$;
\item $\mathcal I_0(\{1\})=\mathcal D_0(\{1\})$.
\end{enumerate}
\end{theorem}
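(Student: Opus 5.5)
The plan is to compute both sets explicitly with the semiring formulas of Theorem~\ref{ThmD}, for the seed $I=\{1\}$. The ideal generated by $1$ is $\overline{\{1\}}=S$: it is a two-sided ideal containing $1$, so it contains $s\cdot1=s$ for every $s$. Substituting into \eqref{EqSemiringOperators} gives
\[
\mathcal I_0(\{1\})=1+S=\{1+s\mid s\in S\},\qquad
\mathcal D_0(\{1\})=\{x\in S\mid x+y=1\text{ for some }y\in S\}.
\]
One point needs checking: Theorem~\ref{ThmD} was stated for semirings, possibly with identity. Its relational description $R_I=\{(x+y,x)\mid y\in\overline I\}$ uses only distributivity and the absorbing zero. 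That description is unaffected by naming $1$, so both formulas apply.

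\emph{(1)$\Rightarrow$(2).} In a ring take $s=-1$, so $0=1+(-1)\in 1+S$.

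\emph{(2)$\Rightarrow$(1).} If $0=1+s$, then $s$ is an additive inverse of $1$. Multiplying by an arbitrary $x$ and using distributivity gives $x+sx=0$, so every element has an additive inverse. Hence $(S,+)$ is a group and $S$ is a ring.

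\emph{(1)$\Rightarrow$(3).} In a ring both sets equal $S$. Indeed, $1+S=S$ because $x=1+(x-1)$. Also every $x$ satisfies $x+(1-x)=1$, so $x\in\mathcal D_0(\{1\})$. Alternatively, one can invoke Theorem~\ref{ThmMalcevRank}, since rings have a Mal'tsev term.

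\emph{(3)$\Rightarrow$(2).} Deduction from a non-empty set always contains $0$, since $0+1=1$. So equality of the two sets puts $0$ in $\mathcal I_0(\{1\})$.

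I expect no real obstacle here. The only step requiring care is confirming that Theorem~\ref{ThmD}'s formulas, and $\overline{\{1\}}=S$, hold in the unital signature. If that is in doubt, I would instead verify (2)$\Rightarrow$(1) directly from Lemma~\ref{PropUnary}. For the unary form, any unary polynomial can be written as $g(z)=b+\sum_k u_kzv_k$. The condition $g(0)=1$ forces $b=1$, and then $g(1)=0$ gives $1+\sum_k u_kv_k=0$, so $1$ again has an additive inverse. The general multivariate case reduces to this through Lemma~\ref{LemA}, by setting all ideal variables equal to $1$.
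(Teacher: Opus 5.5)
Your proposal is correct and is essentially the paper's proof: both compute $\mathcal I_0(\{1\})=1+S$ from Theorem~\ref{ThmD}, obtain additive inverses by multiplying $1+s=0$ by an arbitrary $x$, show that both one-step sets equal $S$ in a ring (the paper's witness $p(z)=x+(1-x)z$ is your identity $x+(1-x)=1$), and use $0\in\mathcal D_0(\{1\})$ for (3)$\Rightarrow$(2). In your optional fallback, a unary semiring polynomial need not have the form $b+\sum_k u_kzv_k$, since a monomial may contain $z$ several times; the argument still goes through, because it only uses that every monomial containing $z$ vanishes at $0$.
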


\begin{proof}
The ideal generated by $1$ is $S$, so Theorem~\ref{ThmD} gives
$\mathcal I_0(\{1\})=1+S$. If $1+u=0$, then
$x+xu=x(1+u)=0$ for every $x\in S$, and $S$ is a ring.
Conversely, in a ring $1+S=S$, and every $x\in S$ belongs to
$\mathcal D_0(\{1\})$, witnessed by the polynomial
$p(z)=x+(1-x)z$. Finally, deduction from a non-empty set always
contains $0$, so (3) implies (2).
\end{proof}

Next we use the diagonal as a subset of a direct square. It is
important that induction and deduction are taken in the square,
whose parameters need not be diagonal.

\begin{theorem}\label{ThmDetectMonoidDiagonal}
Let $M$ be a monoid, let $e=(1,1)\in M^2$, and put
$\Delta_M=\{(a,a)\mid a\in M\}$. Then
\begin{enumerate}
\item $\Delta_M$ is $e$-inductive if and only if $M$ is commutative;
\item $\Delta_M$ is $e$-deductive if and only if $M$ is commutative
and cancellative.
\end{enumerate}
Moreover, for a commutative monoid,
\begin{equation}\label{EqDetectGroupCompletion}
\mathcal D_e(\Delta_M)
=\{(a,b)\in M^2\mid ac=bc\text{ for some }c\in M\}.
\end{equation}
This relation is the kernel congruence of the canonical homomorphism
from $M$ to its Grothendieck group.
\end{theorem}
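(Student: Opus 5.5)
The plan is to work entirely with the unary characterisations of Lemma~\ref{PropUnary}, applied in the monoid $M^2$ with basepoint $e=(1,1)$. A unary polynomial on $M^2$ has the form
\[
g(z)=u_0\,z\,u_1\,z\cdots z\,u_k,\qquad u_i=(u_i',u_i'')\in M^2,
\]
so $g(a,a)=(u_0'au_1'\cdots au_k',\;u_0''au_1''\cdots au_k'')$ and $g(e)=(u_0'\cdots u_k',\;u_0''\cdots u_k'')$. In the commutative case every such polynomial collapses to $g(z)=uz^k$ with $u=(x,y)$. That single normal form drives both "if" directions.

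\emph{Part (1).} Suppose $M$ is commutative. If $g(e)=u\in\Delta_M$, then $u=(x,x)$ and $g(a,a)=(xa^k,xa^k)\in\Delta_M$, so $\Delta_M$ is inductive. Conversely, suppose $\Delta_M$ is $e$-inductive. Take $g(z)=(1,b)\,z\,(b,1)$. Then $g(e)=(b,b)\in\Delta_M$ and $g(a,a)=(ab,ba)$, so inductivity forces $ab=ba$.

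\emph{Part (2), sufficiency.} If $M$ is commutative and cancellative and $g(a,a)=(xa^k,ya^k)$ is diagonal, then cancelling $a^k$ gives $x=y$, so $g(e)\in\Delta_M$. For necessity, cancellativity is immediate once commutativity is known: with $g(z)=(x,y)z$, the hypothesis $xc=yc$ makes $g(c,c)$ diagonal, and deductivity gives $x=y$.

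The step I expect to be the main obstacle is deriving commutativity from deductivity alone. Here $g(e)$ must be the non-diagonal element while some diagonal input lands in $\Delta_M$. Any single sandwich $u\,z\,v$ tends to move both requirements together. I would first try two-variable polynomials via Lemma~\ref{LemB}, inserting $(a,a)$ and $(b,b)$ in different positions, for instance $q(z_1,z_2)=z_1\,(1,?)\,z_2$ with asymmetric parameters. The aim is an output such as $(ab,ab)$ whose value at $(e,e)$ is $(ab,ba)$ or a similar asymmetric pair. Failing that, I would try to show first that $\Delta_M$ deductive implies $\Delta_M$ inductive, by relating $R_{\Delta_M}$ and its converse through the swap automorphism of $M^2$, which fixes $\Delta_M$ and $e$. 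I would then invoke part (1).

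\emph{Formula~\eqref{EqDetectGroupCompletion}.} For commutative $M$, the normal form shows that $\mathcal D_e(\Delta_M)$ consists of the pairs $(x,y)=g(e)$ with $(xc,yc)$ diagonal, where $c$ is a product of diagonal entries. This is exactly the displayed relation, and conversely every such pair is witnessed by $g(z)=(x,y)z$ with input $(c,c)$. Finally, the relation "$ac=bc$ for some $c$" is the standard kernel of $M\to K(M)$, since $a/1=b/1$ in the Grothendieck group precisely when $ac=bc$ for some $c$.
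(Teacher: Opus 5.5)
Your arguments for part (1), for sufficiency in part (2), for cancellation, and for formula~\eqref{EqDetectGroupCompletion} match the paper's. The paper uses the same polynomials $(1,b)\,z\,(b,1)$ and $(a,b)\,z$ and the same commutative normal form.

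The gap is the step you flagged yourself: you never derive commutativity from $e$-deductivity of $\Delta_M$, so the ``only if'' half of (2) is unproved. Your cancellation step also depends on it as written, since $(x,y)\,z$ only gives right cancellation. Neither fallback closes the gap.
\begin{itemize}
\item Your claim that a single sandwich $u\,z\,v$ cannot separate the two requirements is false; see the polynomial below.
\item The swap automorphism $(x,y)\mapsto(y,x)$ cannot relate $R_{\Delta_M}$ to its converse. It fixes every generating pair $((a,a),e)$, so it maps $R_{\Delta_M}$ onto itself and never reverses the direction of a pair. It therefore yields no implication from deductivity to inductivity.
\end{itemize}

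The missing idea is to put the asymmetry into the parameters so that the diagonal input completes the same word in both coordinates. Take
\[
q(z)=(ab,1)\,z\,(1,ba).
\]
Then $q(a,a)=(aba,aba)\in\Delta_M$, while $q(e)=(ab,ba)$. Since $(a,a)\in\Delta_M$, Lemma~\ref{PropUnary}(2) forces $(ab,ba)\in\Delta_M$, that is, $ab=ba$. This is exactly the polynomial the paper uses. With this line inserted, the rest of your proof goes through, including the cancellation step.
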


\begin{proof}
Suppose first that the diagonal is inductive. For $a,b\in M$, the
polynomial $p(z)=(1,b)z(b,1)$ has
$p(e)=(b,b)$ and $p(a,a)=(ab,ba)$, so $ab=ba$.
If instead the diagonal is deductive, use
$q(z)=(ab,1)z(1,ba)$. Since
$q(a,a)=(aba,aba)$ and $q(e)=(ab,ba)$, commutativity follows again.
If $ac=bc$, apply deductivity to $r(z)=(a,b)z$ at the diagonal
input $(c,c)$ to obtain $a=b$. Thus $M$ is cancellative.

Conversely, in a commutative monoid a polynomial on $M^2$, evaluated
on diagonal inputs, has the form $(au,bu)$, while its value on the
all-$e$ tuple is $(a,b)$. Equality $a=b$ implies $au=bu$, proving
inductivity. Cancellation gives the converse implication, proving
deductivity. The same normal form gives the inclusion from left
to right in~\eqref{EqDetectGroupCompletion}; the reverse inclusion
is witnessed by $r(z)=(a,b)z$ at $(c,c)$.
The identification with the group-completion kernel is the classical
criterion of \cite[Proposition II.1.1(b)]{DetWei13}.
\end{proof}

Thus one deduction step on the diagonal gives the least congruence
with cancellative quotient. It measures precisely the obstruction
to embedding a commutative monoid into its group completion,
whereas induction on the same seed leaves the diagonal unchanged.

We now write commutative monoids additively. Recall that a commutative
monoid is \emph{conical} if $a+b=0$ implies $a=b=0$.

\begin{theorem}\label{ThmDetectPositiveCone}
Let $(M,+,0)$ be a commutative monoid. Then
$M\setminus\{0\}$ is $0$-inductive if and only if $M$ is conical.
Consequently $M$ is isomorphic to the positive cone of a partially
ordered abelian group if and only if
\[
\Delta_M\text{ is $(0,0)$-deductive in }M^2,
\qquad
M\setminus\{0\}\text{ is $0$-inductive in }M.
\]
The empty set is allowed when $M$ is trivial.
\end{theorem}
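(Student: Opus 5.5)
First I would settle the conical characterisation by working directly from the monoid formula in Theorem~\ref{ThmMonoids}: for non-empty $I$, $\mathcal I_0(I)=I+\langle I\rangle$, where $\langle I\rangle$ is the generated submonoid. With $I=M\setminus\{0\}$, the set is $0$-inductive exactly when $I+\langle I\rangle\subseteq I$. Every element of $\langle I\rangle$ is either $0$ or a sum of non-zero elements, so this inclusion holds iff $a+b\neq0$ whenever $a,b\neq0$. If $M$ is conical, $a+b=0$ would force $a=0$, so the inclusion holds. Conversely, suppose $a+b=0$ with $a\neq0$. Then $b\neq0$ as well, since otherwise $a=0$. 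Hence $a+b\in I+\langle I\rangle$ but $a+b\notin I$. When $M$ is trivial, $I=\varnothing$ is inductive and $M$ is conical, which matches the remark about the empty set.

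For the positive-cone characterisation, I would use the classical fact that a commutative monoid is (isomorphic to) the positive cone of a partially ordered abelian group iff it is cancellative and conical. In that case $M$ embeds in its Grothendieck group $K$, and $M$ is a submonoid with $M\cap(-M)=\{0\}$, which is exactly the condition for $M$ to be the cone of a partial order on $K$. The converse is immediate, since positive cones are submonoids of groups and $a+b=0$ with $a,b\geqslant0$ forces $a=b=0$.

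Next I would apply Theorem~\ref{ThmDetectMonoidDiagonal}(2) with $e=(0,0)$ written additively. It gives that $\Delta_M$ is $(0,0)$-deductive in $M^2$ iff $M$ is commutative and cancellative. For this commutative $M$ that reduces to cancellative, so both displayed conditions together amount to cancellative plus conical, as required.

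The main obstacle is only the justification of the cancellative-conical characterisation of positive cones. I would verify it directly: cancellativity gives injectivity of $M\to K$ by the kernel description in \eqref{EqDetectGroupCompletion}. Conicity gives antisymmetry of the relation $x\leqslant y\iff y-x\in M$ on $K$, while reflexivity, transitivity and translation invariance come from $M$ being a submonoid.
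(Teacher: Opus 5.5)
Your proposal is correct and follows the paper's proof essentially step for step. The conical characterisation comes from the same normal form $c+n_1x_1+\cdots+n_kx_k$ of monoid polynomials, which you use in its packaged form $\mathcal I_0(I)=I+\langle I\rangle$; your witness $a+b$ is exactly the paper's polynomial $a+z$ evaluated at $b$. The positive-cone statement likewise combines Theorem~\ref{ThmDetectMonoidDiagonal}(2) with the Grothendieck-group embedding and the order $g\leqslant h\iff h-g\in M$, whose antisymmetry is conicality.
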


\begin{proof}
Every polynomial has the form
$c+n_1x_1+\cdots+n_kx_k$, with $n_i\geqslant0$, and its value at
the zero tuple is $c$. In a conical monoid, a sum with $c\ne0$
cannot be zero. Conversely, if $a+b=0$ with $a,b\ne0$, then
$p(z)=a+z$ has nonzero anchor $p(0)=a$ and sends the nonzero input
$b$ to zero, contradicting inductivity.

By Theorem~\ref{ThmDetectMonoidDiagonal}, the two displayed conditions
say exactly that $M$ is cancellative and conical. A cancellative
commutative monoid embeds in its Grothendieck group $G$; identifying
$M$ with its image, define
\[
g\leqslant h\quad\Longleftrightarrow\quad h-g\in M.
\]
This is a translation-invariant preorder, and conicality makes it
antisymmetric. Its positive cone is $M$. Conversely, a positive
cone in an ordered abelian group is cancellative and conical.
This is the usual positive-cone construction; see also
\cite[Proposition 2.5]{DetSti12}.
\end{proof}

For the next test we return to multiplicative notation. A monoid is
\emph{directly finite} if $ab=1$ implies $ba=1$.

\begin{theorem}\label{ThmDetectDirectFinite}
Let $U(M)$ denote the group of units of a monoid $M$. Then $M$ is
directly finite if and only if $M\setminus U(M)$ is $1$-inductive.
\end{theorem}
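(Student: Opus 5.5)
We will use the unary characterisation of Lemma~\ref{PropUnary}(1): the set $N=M\setminus U(M)$ is $1$-inductive exactly when, for every unary monoid polynomial $g$ and every non-unit $a$, $g(1)\notin U(M)$ implies $g(a)\notin U(M)$. A unary polynomial on a monoid is a word $g(z)=x_0zx_1z\cdots zx_k$ with parameters $x_i\in M$. Then $g(1)=x_0x_1\cdots x_k$. The statement is degenerate when $N$ is empty: $M$ is then a group, hence directly finite, and the empty set is inductive. So the content is the equivalence for general $M$, and we argue both directions through this unary form.

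For the direction ``not directly finite $\Rightarrow$ $N$ not inductive'', we take $a,b$ with $ab=1\neq ba$. Neither $a$ nor $b$ is a unit: if $a$ had an inverse, then $b=a^{-1}$ and $ba=1$. We use the polynomial $g(z)=azb$. Then $g(1)=ab=1$, which is a unit, so the hypothesis of the unary test holds vacuously in the wrong direction. We therefore want the reverse: an anchor that is a non-unit together with an input that makes the value a unit. So we take instead $g(z)=bz$ with input $a$. Here $g(1)=b\in N$, while $g(a)=ba$, which is not necessarily a unit. Better is $g(z)=az$ with the non-unit input $b$: the anchor $g(1)=a$ lies in $N$ and $g(b)=ab=1\notin N$. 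This violates inductivity, and the direction is done.

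For the converse, we assume $M$ is directly finite and let $g(z)=x_0zx_1\cdots zx_k$ with $g(1)=x_0\cdots x_k$ a non-unit and $a$ a non-unit. We must show that $g(a)$ is a non-unit. The key lemma is that, in a directly finite monoid, a product $uv$ is a unit only if both $u$ and $v$ are units. Indeed, if $uvw=1$, then direct finiteness gives $vwu=1$, so $u$ has a right inverse $vw$ with $u(vw)=1=(vw)u$; hence $u$ is a unit, and then $v=u^{-1}(uv)$ is a unit too. By induction, a finite product is a unit if and only if every factor is a unit. Now if $k\geqslant1$, the product $g(a)$ contains the factor $a$, so it is a non-unit. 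If $k=0$, then $g$ is constant and $g(a)=x_0=g(1)$ is a non-unit.

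The main point to check is this lemma that units in a directly finite monoid are factor-closed; it is also the only place where direct finiteness is used. The rest is bookkeeping with Lemma~\ref{PropUnary}.
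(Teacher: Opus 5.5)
Your proof is correct and is essentially the paper's: the same witness $g(z)=az$ at the non-unit input $b$, and the same lemma that in a directly finite monoid a product is a unit only if its factors are, proved through the right inverse $v(uv)^{-1}$ of $u$. The differences are cosmetic. You reduce to unary polynomials via Lemma~\ref{PropUnary} and take the non-unit factor of $g(a)$ to be the input $a$, whereas the paper works with multivariate words and takes it to be a parameter. In a final write-up, delete the abandoned attempts with $azb$ and $bz$, and add the one-line symmetric check that $b$ is also a non-unit.
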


\begin{proof}
If $ab=1$ and $ba\ne1$, then $a$ and $b$ are nonunits. The polynomial
$p(z)=az$ has nonunit anchor $p(1)=a$ and sends the nonunit $b$ to
$1$, so the set of nonunits is not inductive.
Conversely, in a directly finite monoid, if $ab$ is a unit then
$a$ and $b$ are units: $a$ has a right inverse $b(ab)^{-1}$, which
by direct finiteness is also a left inverse, and then
$b=a^{-1}(ab)$ is a unit. Inductively the same holds for any finite
product. A monoid polynomial is a word in its variables and
parameters. If its value on some inputs is a unit, then every
parameter occurring in the word is a unit, and hence its value at
the all-$1$ tuple is a unit. This proves the required inductivity.
\end{proof}

The classical equivalent condition is that the nonunits form a
two-sided monoid ideal; see \cite[Lemma 2]{DetCR22}. The preceding
theorem describes this condition by induction at the identity,
without imposing commutativity.

\subsection{Universal rank one in monoids and bands}\label{SubsecDetectMonoidRanks}

Here the basepoint ranges over all elements, as in
Subsection~\ref{SubsecUniversalRanks}. The use of universal ranks is
essential: it is different from using only the monoid identity.

\begin{lemma}\label{LemDetectSemilatticeObstructions}
In the variety of meet-semilattices, both universal ranks exceed
one. The same is true when a greatest element is named.
\end{lemma}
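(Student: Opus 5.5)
The plan is to argue directly from the normal form of meet-semilattice polynomials, following the pattern of the proof of Theorem~\ref{DetThmLatticeUniversalRank}. Every non-constant polynomial in the variables $z_1,\dots,z_k$ is either $z_{i_1}\wedge\cdots\wedge z_{i_r}$ or $b\wedge z_{i_1}\wedge\cdots\wedge z_{i_r}$ for a parameter $b$. Hence, for a basepoint $a$ and a seed $I$, we get
\[
\mathcal I_a(I)=\{b\wedge y_1\wedge\cdots\wedge y_r\mid y_j\in I,\ b\wedge a\in I\}\cup\{y_1\wedge\cdots\wedge y_r\mid y_j\in I,\ a\in I\},
\]
and dually
\[
\mathcal D_a(I)=\{b\wedge a\mid b\wedge y_1\wedge\cdots\wedge y_r\in I\}\cup\{a\mid I\neq\varnothing\}.
\]
By Theorem~\ref{ThmUniversalConstantReduction}, it suffices to exhibit, in a finite meet-semilattice $L$, a basepoint and a seed whose second iterate strictly exceeds the first. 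This must be done separately for induction and for deduction. I would take $L$ to be small and inside the variety, such as a finite chain, its square, or the order-relation subalgebra $B=\{(u,v)\mid u\leqslant v\}$ of a chain. That last choice has the advantage that a named greatest element $1$ is automatically present as $(1,1)$, so the second sentence of the lemma comes for free.

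\emph{Lower bound for induction.} The mechanism is that an anchor must satisfy $b\wedge a\in I$, so the anchors available at step one are limited by the elements of $I$ below $a$. I would choose $I$ so that the first step produces a new element $j\leqslant a$, not of the form $i\wedge a$ already usable, and then use a parameter $b$ with $b\wedge a=j$ in the second step to reach a target $w$. To show that $w\notin\mathcal I_a(I)$, I would use a one-step barrier analogous to \eqref{DetEqLatticeSquareBarrier}, which relies only on monotonicity and therefore holds verbatim in semilattices. In $B$ with basepoint $\alpha=(d,e)$, the barrier says that every one-step value $w$ satisfies $\pi_1(t)\leqslant\pi_2(w)$ for some anchor $t$. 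I would then choose the target $w$ so that its second coordinate lies below the first coordinate of every original anchor, while the new anchor $j$ has a low first coordinate. A first attempt with the three-element chain shows that this has to be arranged carefully, because meets alone produce few anchors. I therefore expect to need a four-element chain or the square of a $V$-shaped semilattice, with the new anchor $j$ arising as a meet of two seed elements lying below $\alpha$.

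\emph{Lower bound for deduction.} Here the step-one output is $\{b\wedge a\}$ with witnesses $b\wedge\vec y\in I$. I would choose the seed so that a new element $y'$ enters at step one, for instance $a$ itself or some $b\wedge a$. This $y'$ then serves as an input that makes $b'\wedge y'\in I$ possible for a parameter $b'$ for which no old input works, producing $b'\wedge a$ at step two. The dual barrier, again obtained from monotonicity in $B$, excludes $b'\wedge a$ from the first step.

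The main obstacle is finding explicit seeds for which the barrier actually separates the first step from the second. Once the examples are found, membership in the second step is a one-line polynomial check. Non-membership in the first step then follows from the barrier together with an enumeration of the few possible anchors.
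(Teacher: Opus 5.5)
\textbf{There is a genuine gap.} Your proposal identifies the induction mechanism correctly: a new anchor below the basepoint appears after one step and unlocks new parameters. But the proof stops before it starts. The lemma is an existence statement whose whole content is two explicit witnesses. You produce neither, and you yourself call finding them the main obstacle.

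Moreover, most of the algebras you propose to search cannot supply the witnesses. Your normal form gives, in every meet-semilattice,
\[
\mathcal D_a(I)=I\cup\{a\}\cup\{b\wedge a\mid b\in\mathord\uparrow I\}.
\]
Indeed, $b\wedge y_1\wedge\cdots\wedge y_r=i\in I$ forces $b\geqslant i$, and any $b\geqslant i$ works with the single input $i$. If the semilattice is the meet reduct of a distributive lattice, this operator is idempotent. Take $w\geqslant b\wedge a$ with $b\geqslant i\in I$, and put $b'=b\vee(w\wedge a)$. Then $b'\geqslant i$ and $b'\wedge a=(b\wedge a)\vee(w\wedge a)=w\wedge a$. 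Chains, squares of chains and the order relation $B$ of a chain are all distributive lattices. So none of them can witness deductive rank above one, and the route through $B$ with its ``free'' greatest element fails for half the lemma.

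Chains fail for induction too. An anchor strictly below $a$ forces the parameter to equal it. Hence from $I\ni a$ the first step adjoins only elements $b$ with $a\leqslant b\leqslant y$ for some $y\in I$, and the second step adds nothing. When $a\notin I$, nothing is adjoined at all.

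Two smaller points on your deduction plan:
\begin{itemize}
\item Letting $a$ itself enter at step one cannot help, since $(\mathord\uparrow a)\wedge a=\{a\}$.
\item At the second step the witness condition is membership in $\mathcal D_a(I)$, not in $I$.
\end{itemize}

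Among your candidates, only the square of the V-shaped semilattice $\{0,u,v\}$ can handle deduction. With basepoint $(u,u)$ and seed $\{(v,v)\}$, the first step is $\{(v,v),(u,u),(0,0)\}$, after which $(u,v)\wedge(u,u)=(u,0)$ is new. However, this algebra has no greatest element, so the second sentence of the lemma is not automatic there.

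\textbf{What the paper does.} It writes down witnesses that have a greatest element and checks them directly from the normal form, with no barrier argument.
\begin{itemize}
\item \emph{Induction.} Take $(\mathcal P(\{1,2,3\}),\cap)$, $a=\{1\}$ and $S=\{\{1\},\{1,2\},\{3\}\}$. The only usable anchor is $\{1\}$, which forces $1\in b$, so $\mathcal I_a(S)=S\cup\{\varnothing\}$. The new anchor $\varnothing$ then admits the parameter $\{2\}$, and $\{2\}\cap\{1,2\}=\{2\}$ appears only at the second step.
\item \emph{Deduction.} Take the five-element semilattice with $u<v<a<1$ and $u<w<1$, where $w$ is incomparable with $v$ and $a$. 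This is an $N_5$ shape, necessarily non-distributive by the argument above. From the seed $\{w\}$ the displayed formula gives $\{w,u,a\}$. Then $v\geqslant u$ yields $v\wedge a=v$, which is new.
\end{itemize}
Naming the greatest element does not change the polynomial maps, which gives the second sentence.
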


\begin{proof}
For induction, take the meet-semilattice of all subsets of
$\{1,2,3\}$, with $a=\{1\}$ and
$S=\{\{1\},\{1,2\},\{3\}\}$. Every nonconstant polynomial is a
meet of a nonempty selection of its variables, possibly with a
parameter. Its anchor at $a$ can belong to $S$ only when it is
$\{1\}$. It follows that
\[
\mathcal I_a(S)=S\cup\{\varnothing\}.
\]
At the next step $p(z)=\{2\}\cap z$ sends the seed element
$\{1,2\}$ to $\{2\}$, using the newly available anchor
$p(a)=\varnothing$. Thus the second step is strictly larger.

For deduction, take the five-element meet-semilattice with order
$x<b<a<1$ and $x<c<1$, where $c$ is incomparable with $a,b$.
At basepoint $a$ the seed $\{c\}$ has first deduction
$\{c,x,a\}$. Indeed, the meet-polynomial normal form shows that
the only new values are $a\wedge d$ for $d\geqslant c$, together
with $a$. The second step contains $b$, witnessed by
$p(z)=b\wedge z$ at the input $x$.
Both examples have a greatest element, and naming it does not
change the polynomial maps.
\end{proof}

\begin{theorem}\label{ThmDetectMonoidRank}
For a variety $V$ in the ordinary monoid signature, the following
conditions are equivalent:
\begin{enumerate}
\item $r_{\mathcal I}^{\mathrm{univ}}(V)\leqslant1$;
\item $r_{\mathcal D}^{\mathrm{univ}}(V)\leqslant1$;
\item $V$ satisfies $x^N=1$ for some integer $N\geqslant1$;
\item $V$ is a Mal'tsev variety.
\end{enumerate}
When these conditions hold and $V$ is non-trivial,
$\operatorname{urk}(V)=(1,1)$.
\end{theorem}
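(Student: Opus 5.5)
Our plan is to prove (3)$\Rightarrow$(4)$\Rightarrow$(1),(2) directly and to reach (3) from (1) and from (2) separately, by contrapositive. For (3)$\Rightarrow$(4): if $x^N=1$ holds, every element is a unit with inverse $x^{N-1}$, so $V$ is term-equivalent to a variety of groups (of exponent dividing $N$), and $p(x,y,z)=xy^{N-1}z$ is a Mal'tsev term. Then (4) gives (1) and (2) via Theorem~\ref{ThmMalcevRank}, applied at every basepoint, together with Theorem~\ref{ThmUniversalConstantReduction}. When $V$ is non-trivial, both universal ranks are positive: in a non-trivial algebra, deduction at $a$ from $\{b\}$ with $b\ne a$ adds $a$, and the same holds for induction in the Mal'tsev case (as in Theorem~\ref{PropStandardRanks}). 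This gives $\operatorname{urk}(V)=(1,1)$.

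For the converses, suppose $V$ satisfies no identity $x^N=1$. The key structural step is to show that such a $V$ contains either the variety of semilattices (as monoids) or the free monogenic monoid $\mathbb N_0$. The standard argument runs as follows. The one-generated free algebra $F_V(x)$ is a cyclic monoid $\langle x\rangle$, and the failure of $x^N=1$ for all $N$ means $x$ is not a unit there. So either $F_V(x)\cong\mathbb N_0$, or it has index $i\geqslant1$ and period $n$, giving $x^{i+n}=x^i$. In the latter case the idempotent power $e=x^{k}$, for suitable $k$, is a non-identity idempotent, and the submonoid $\{1,e\}\subseteq F_V(x)$ is a two-element semilattice. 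Hence $V$ contains the variety generated by the two-element semilattice monoid, that is, all meet-semilattices with a greatest element $1$ named.

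We then treat the two cases. In the semilattice case, Lemma~\ref{LemDetectSemilatticeObstructions} (with the greatest element named) shows that both universal ranks of this subvariety exceed one. Universal ranks cannot decrease when passing to a larger variety, since witnesses remain inside it, so (1) and (2) both fail. In the $\mathbb N_0$ case, $V$ contains all commutative monoids: any monoid identity satisfied by $\mathbb N_0$ is balanced in each variable, hence satisfied by every commutative monoid. Theorem~\ref{ThmMonoids} gives infinite deductive rank at $0$, which defeats (2). For (1), we need an inductive obstruction at some basepoint other than $0$ in a commutative monoid. We would try the basepoint $a=1$ in $\mathbb N_0$ with a seed such as $\{2\}$, or work in $\mathbb N_0^2$. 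Unary polynomials are $c+nz$, so $\mathcal I_1(S)=\{c+\sum n_js_j: c+\sum n_j\in S\}$, and we would search for a seed whose closure needs two steps, for instance one where a new small anchor appears only after the first step. Alternatively, since $\mathbb N_0$ contains copies of both behaviours, we might pass to the monoid $\{1,e\}\times\mathbb N_0$-type quotients already present in the variety, reducing again to the semilattice obstruction. That would require checking that $V$ then also contains a non-trivial idempotent, which fails for $V$ equal to commutative monoids in general.

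The main obstacle is therefore this last point: exhibiting an explicit two-step inductive growth at a suitably chosen basepoint in a commutative monoid, say in $\mathbb N_0$ or $\mathbb N_0^2$ with basepoint $1$ or $(1,0)$. We would attack it by using the explicit normal form $c+\sum n_jx_j$. The aim is a seed $S$ for which one step produces an anchor $c\in S$ with $c+k\cdot$(basepoint) landing on a new element of smaller size, as happens for $a=\{1\}$ in the lemma, and which the second step then exploits.
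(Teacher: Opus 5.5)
\textbf{Gap in (1)$\Rightarrow$(3).} Your arguments for (3)$\Rightarrow$(4)$\Rightarrow$(1),(2), for the non-triviality clause, and for the semilattice case of the converse are sound. The cyclic-monoid dichotomy is also correct: if $x$ is not a unit in $F_V(x)$, then either $F_V(x)\cong\mathbb N_0$, or the index is at least $1$ and the idempotent $e=x^k$ differs from $1$. The gap is in (1)$\Rightarrow$(3) when $F_V(x)\cong\mathbb N_0$. There you never exhibit an inductive obstruction, so as written the proof does not show that (1) fails.

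\textbf{The missing idea.} The reason you give for abandoning the reduction to the semilattice case is false. The variety of commutative monoids certainly contains non-trivial idempotents: every meet-semilattice with a greatest element, taken as the identity, is a commutative idempotent monoid. More directly, the two-element semilattice monoid $E=\{1,e\}$ is a homomorphic image of $\mathbb N_0$, via $0\mapsto1$ and $n\mapsto e$ for $n\geqslant1$. Hence $E\in V$ in your second case too. Lemma~\ref{LemDetectSemilatticeObstructions} then defeats both (1) and (2). You need neither a two-step inductive growth in $\mathbb N_0$ or $\mathbb N_0^2$ nor Theorem~\ref{ThmMonoids}.

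\textbf{Comparison with the paper.} After this repair your case split is redundant: both branches reduce to $E\in V$, which is exactly the paper's route. The paper reaches $E\in V$ in one step. A monoid identity holds in $E$ precisely when its two sides have the same variable content after deleting occurrences of $1$. So if $E\notin V$, some identity of $V$ has a variable $x$ on only one side. Substituting $1$ for every other variable yields $x^N=1$ with $N\geqslant1$. Your free-cyclic-monoid analysis is a valid alternative way to obtain $E\in V$, once the $\mathbb N_0$ branch is closed by the quotient above.
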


\begin{proof}
Let $E$ be the two-element meet-semilattice with its greatest element
as monoid identity. A monoid identity holds in $E$ precisely when
its two words have the same variable content, after occurrences
of $1$ have been removed. If $E\notin V$, some identity of $V$
has different variable contents. Choose a variable occurring on
just one side and replace all the other variables by $1$; this
gives $x^N=1$ for some $N\geqslant1$.
If (3) fails, therefore, $V$ contains $E$, and hence every
meet-semilattice with a greatest element: these have exactly the
same identities as $E$. Lemma~\ref{LemDetectSemilatticeObstructions}
then rules out both (1) and (2).

If (3) holds with $N>1$, inversion is the term $x^{N-1}$ and
$p(x,y,z)=xy^{N-1}z$ is a Mal'tsev term. For $N=1$ the variety is
trivial. Finally (4) implies (1) and (2) by
Theorem~\ref{ThmMalcevRank}, and the assertion about the exact
pair follows as in Theorem~\ref{PropStandardRanks}.
\end{proof}

Thus rank one detects, among monoid varieties, groups whose
exponents have a common finite bound. This does not assert that
the class of all groups is a variety in the monoid signature.

A \emph{band} is an idempotent semigroup. It is \emph{rectangular}
if it satisfies $xyx=x$.

\begin{theorem}\label{ThmDetectBandRank}
For a variety $V$ in the ordinary band signature, the following
conditions are equivalent:
\begin{enumerate}
\item $r_{\mathcal I}^{\mathrm{univ}}(V)\leqslant1$;
\item $r_{\mathcal D}^{\mathrm{univ}}(V)\leqslant1$;
\item $V$ consists of rectangular bands.
\end{enumerate}
\end{theorem}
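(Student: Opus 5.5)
The plan mirrors the proof of Theorem~\ref{ThmDetectMonoidRank}: show that a band variety which is not rectangular contains the two-element meet-semilattice, and then invoke Lemma~\ref{LemDetectSemilatticeObstructions}.

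For the converse, use the fact that rectangular bands form a Mal'tsev variety.

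\emph{Step 1: (3) implies (1) and (2).} In a rectangular band, $p(x,y,z)=xyz$ is a Mal'tsev term. Indeed, $xyy=xy$, and $xy=xyx\cdot y$? It is cleaner to use the normal form: in a rectangular band $xyz=xz$, so
\[
p(x,y,y)=xy,
\]
which is not $x$. So I would instead use the known representation of rectangular bands as $L\times R$ with $(a,b)(c,d)=(a,d)$. There, the term $p(x,y,z)=xzy\cdot$... needs care. Because every term depends only on its first and last variables, a Mal'tsev term $p(x,y,z)$ would have to equal a product whose first letter is $x$ and last letter is $z$ when $y$ is identified appropriately. The cleanest route is to avoid a Mal'tsev term and compute the semicongruence directly.

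In $L\times R$, a polynomial is determined by its first and last letters. So every semicongruence is a product $\rho_L\times\rho_R$ of arbitrary reflexive relations on $L$ and $R$. The generated relation $R_I$ is then
\[
\bigl(\Delta\cup(\pi_L I\times\{a_L\})\bigr)\times\bigl(\Delta\cup(\pi_R I\times\{a_R\})\bigr).
\]
This is the situation of Example~\ref{ExProductNotMaxima}: both factors are transitive and unchanged by a further step, so both one-step operators are idempotent. Hence both universal ranks are at most one.

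\emph{Step 2: (1) or (2) implies (3).} Suppose $V$ is not rectangular. A band variety not satisfying $xyx=x$ contains the semilattice $E$. To see this, note that the identities of $E$ are exactly the content-equal band identities. If some identity of $V$ has unequal contents, substitute so as to get $x=y$-type triviality, or rather an identity making $V$ consist of rectangular bands. I would argue this through the standard fact that every band is a semilattice of rectangular bands. If $V$ satisfies only content-equal identities, then $E\in V$. Otherwise, some identity $u=v$ has a variable $y$ in $u$ but not in $v$. Substituting $x$ for every other variable gives $w(x,y)=x$ with $y$ occurring in $w$. Multiplying on both sides by $x$ yields $xyx=x$, using idempotence to collapse $x\,w\,x$ to a form containing $xyx$. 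This last manipulation is the main obstacle, and I would handle it by the explicit case analysis of two-variable band words.

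Once $E\in V$, the variety contains all meet-semilattices, as band reducts. The polynomial maps of a semilattice in the band signature coincide with those in the meet signature, so Lemma~\ref{LemDetectSemilatticeObstructions} excludes both (1) and (2).
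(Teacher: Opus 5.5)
Your argument is correct and is essentially the paper's proof. For rectangular bands, the generated semicongruence is the product of the transitive relations $\Delta\cup(\pi_L(I)\times\{a_L\})$ and $\Delta\cup(\pi_R(I)\times\{a_R\})$, and this relation is unchanged after either step. Conversely, a non-rectangular band variety contains the two-element semilattice by the same two-variable reduction, after which Lemma~\ref{LemDetectSemilatticeObstructions} applies. Your ``main obstacle'' $x=xwx=xyx$ is immediate from the five reduced forms of $w$; the only non-obvious case is $x(yxy)x=(xy)^2x=xyx$.

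Delete the opening claim that rectangular bands form a Mal'tsev variety. It is false: every term reduces to some $x_ix_j$, and none of these is a Mal'tsev term in a non-trivial rectangular band. Your final argument rightly does not use it.
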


\begin{proof}
Suppose that $V$ omits the two-element semilattice. As in the
preceding proof, an identity $u=v$ of $V$ has different variable
contents. Choose a variable $x$ occurring on only one side and
replace every other variable by $y$. The $x$-free side becomes
$y$, and in a band the other side reduces to one of
\[
x,\quad xy,\quad yx,\quad xyx,\quad yxy.
\]
Indeed, repeated adjacent letters disappear, and alternating
words shorten using $(xy)^2=xy$ and $(yx)^2=yx$.
Each resulting identity implies $xyx=x$: the first forces
triviality, the next two give a right-zero or left-zero band,
and the fifth is precisely rectangularity after interchanging the
variables. For the fourth, $xyx=y$ gives $xy=y=yx$ by multiplying
on either side by $x$; interchanging $x,y$ then forces $x=y$. Thus any
nonrectangular band variety contains the semilattices, and
Lemma~\ref{LemDetectSemilatticeObstructions} makes both its
universal ranks greater than one.

Conversely, write a rectangular band in its usual form
$B=L\times R$, with
$(l,r)(l',r')=(l,r')$. For a non-empty subset $S$ and basepoint
$a=(a_L,a_R)$, its generated semicongruence is the product of
\[
\Delta_L\cup\bigl(\pi_L(S)\times\{a_L\}\bigr),
\qquad
\Delta_R\cup\bigl(\pi_R(S)\times\{a_R\}\bigr).
\]
To see this, a non-empty subalgebra of a rectangular band is the
product of its two projections, and multiplication in $B^2$
combines these projections independently. Both displayed
relations are transitive. Induction adds no new coordinate values
to the projections of $S$, whereas deduction can add only the
corresponding coordinates of $a$. Such additions leave the
displayed relations unchanged. Regenerating the semicongruence
after either step therefore makes no change, and transitivity
shows that both operators are idempotent.
\end{proof}

The full variety of rectangular bands has universal pair $(1,1)$.
For induction, in $\{0,1\}\times\{0,1\}$ take basepoint $(0,0)$
and seed $\{(0,1),(1,0)\}$; the element $(1,1)$ is added, for
example by $p(z)=z(0,1)$ at the input $(1,0)$, whose anchor is
$(0,1)$. Deduction is nontrivial whenever the basepoint is omitted
from a non-empty seed. In contrast, left-zero and right-zero
bands have universal pair $(0,1)$, since their polynomial maps
are projections or constants.

\subsection{The number of premises and idempotent addition}
\label{SubsecDetPremises}

The ranks count successive applications of the operators. A different
question concerns the number of elements of the initial set which must
cooperate in a single application. For commutative monoids and
semimodules, a finite uniform bound on this number detects idempotent
addition.

\begin{theorem}\label{ThmDetFinitePremises}
Let $V$ be a variety of commutative monoids, or a subvariety of the
variety of left semimodules over a fixed unital semiring $R$. In either
case use the usual signature and distinguish the additive identity
$0$. The following conditions are equivalent:
\begin{enumerate}
\item $V$ satisfies $x+x=x$.
\item In every $M\in V$, deduction preserves binary unions:
\[
\mathcal D_0(S\cup T)=\mathcal D_0(S)\cup\mathcal D_0(T).
\]
\item There is a positive integer $k$ such that, for every $M\in V$
and every non-empty $S\subseteq M$,
\begin{equation}\label{EqDetFinitePremises}
\mathcal D_0(S)=
\bigcup_{\substack{\varnothing\ne T\subseteq S\\ |T|\leqslant k}}
\mathcal D_0(T).
\end{equation}
\end{enumerate}
When these conditions hold, $k=1$ suffices and
$\mathcal D_0(S)=\mathord\downarrow S$, where
$x\leqslant y$ means $x+y=y$.
\end{theorem}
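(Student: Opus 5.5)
Our first step is to derive a one-step formula for deduction that holds in both settings. For commutative monoids, Theorem~\ref{ThmMonoids} already gives
\[
\mathcal D_0(S)=\{x\mid x+y\in S\text{ for some }y\in\langle S\rangle\}.
\]
In semimodules we apply the same collection of parameters. A polynomial has the form $c+\sum r_ix_i$, and its value at the zero tuple is $c$. The proof of Theorem~\ref{ThmD} then yields the same formula, with $\langle S\rangle$ now the subsemimodule generated by $S$. Everything else uses only this description.

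(1)$\Rightarrow$(2) and the final assertion. Assume $x+x=x$, and let $\leqslant$ be the semilattice order. If $x+y=s\in S$, then $x\leqslant s$, so $\mathcal D_0(S)\subseteq\mathord\downarrow S$. Conversely, if $x\leqslant s\in S$, then $x+s=s$ with $s\in\langle S\rangle$, so $x\in\mathcal D_0(S)$. Hence $\mathcal D_0(S)=\mathord\downarrow S$. Downsets commute with unions, which gives (2). The same formula gives (3) with $k=1$, since $\mathord\downarrow S=\bigcup_{s\in S}\mathord\downarrow\{s\}$. This argument applies equally to semimodules, because only $+$ is used.

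(2)$\Rightarrow$(3). Since $\mathcal D_0$ is finitary, any $x\in\mathcal D_0(S)$ lies in $\mathcal D_0(T_0)$ for some finite non-empty $T_0\subseteq S$. Repeated use of (2) splits $T_0$ into singletons, so $k=1$ works. We should check that (2) applies to non-empty sets only and is iterated on finite sets; this is routine.

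(3)$\Rightarrow$(1) is the main step. Suppose $k$ is fixed. We work in the free algebra of $V$ on generators $a_0,\dots,a_k$. In the monoid case this is a quotient of $\mathbb N_0^{k+1}$. In the semimodule case it is free on $k+1$ generators. Our plan is to take
\[
S=\{\sigma,a_0,\dots,a_k\},\qquad \sigma=a_0+\cdots+a_k,
\]
and to target the element $x=0$ or some residual such as $a_0+\cdots+a_{k-1}$; better still, we aim at a residual that requires all the $a_i$ jointly. Concretely, put $y=a_0+\cdots+a_k\in\langle S\rangle$, and look for $x$ with $x+y\in S$ using the full collection of generators. A more robust first choice is this. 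With $S=\{a_0,\dots,a_k,\;2a_0+a_1+\cdots+a_k\}$, the residual $x=a_0$ is obtained using $y=a_0+\cdots+a_k$, which needs every generator. Any $T$ with $|T|\leqslant k$ omits some element of $S$. We then apply the homomorphism that kills the omitted generator (sending it to $0$), or otherwise compare supports, to show $a_0\notin\mathcal D_0(T)$ unless an identity relating the generators holds in $V$. That identity should be forced to be an equation from which $x+x=x$ follows. Specialising variables to $x$ or $0$ gives identities of the form $n x = m x$ with $n\neq m$, that is, $nx=mx$. In the monoid case, combining such an identity with an idempotency argument is where care is needed. A periodic identity $x^{n}=x^{n+p}$ on its own does not give idempotence, so the construction must exclude groups as well. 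For groups, deduction is normal closure (Theorem~\ref{PropStandardRanks}), and $\mathcal D_0(S)=\langle S\rangle$ in one step is not a union of small pieces, because the subgroup generated by $k+1$ independent elements of $C_n^{k+1}$ needs all of them. The main obstacle is therefore to choose $S$ and the target element so that the resulting forced identity, in every case, reduces to $x+x=x$. We would first try $x=a_0+\cdots+a_k$ with $S=\{2a_0+\cdots,\dots\}$ variants, then analyse the minimal counterexample by substituting $0$ or a single variable, falling back on separate treatment of the case where $V$ satisfies $nx=(n+p)x$.
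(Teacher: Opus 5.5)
Your derivation of the one-step formula, your proof of (1)$\Rightarrow$(2),(3) with $k=1$ and $\mathcal D_0(S)=\mathord\downarrow S$, and your argument for (2)$\Rightarrow$(3) by finitarity are correct and match the paper. The implication (3)$\Rightarrow$(1), however, is not proved. You end by naming the choice of seed and target as ``the main obstacle'', and the concrete candidates you list fail:
\begin{itemize}
\item If the target lies in the seed, as with $a_0\in S=\{a_0,\ldots,a_k,\,2a_0+a_1+\cdots+a_k\}$, then $a_0\in\mathcal D_0(\{a_0\})$ by extensivity, so no premise bound is violated.
\item With $S=\{\sigma,a_0,\ldots,a_k\}$, the target $0$ is deduced from any singleton.
\item With the same $S$, the target $a_0+\cdots+a_{k-1}$ is deduced from $\{\sigma,a_k\}$ using $u=a_k$.
\end{itemize}
Your detour through identities $nx=mx$ in the free algebra, with a separate treatment of periodic (group) varieties, is also unnecessary.

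The missing idea is to argue contrapositively from a concrete non-idempotent element, and to keep the target out of the seed. Take $M\in V$ and $a\in M$ with $a+a\ne a$, so $a\ne0$. Put $m=k+1$, and in $M^m$ let $e_i$ have $a$ in coordinate $i$ and $0$ elsewhere. Let $v=(a,\ldots,a)$ and $S=\{v,e_1,\ldots,e_{m-1}\}$. Then $e_0+(e_1+\cdots+e_{m-1})=v$ gives $e_0\in\mathcal D_0(S)$.

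Now suppose $T\subsetneq S$ and $e_0+u=s\in T$ with $u\in\langle T\rangle$.
\begin{itemize}
\item If $v\notin T$, coordinate $0$ gives $a=0$.
\item Otherwise some $e_j\notin T$. Write $u=\lambda v+\sum_i\mu_ie_i$ with $\mu_j=0$, where the coefficients lie in $\mathbb N_0$ or in $R$. Coordinates $0$ and $j$ give $a+\lambda a=s_0$ and $\lambda a=s_j$.
\item If $s=v$, these force $a+a=a$.
\item If $s=e_i$ with $i\ne j$, they force $\lambda a=0$ and hence $a=0$.
\end{itemize}
Every case contradicts $a+a\ne a$. So all $k+1$ seed elements are needed, contradicting (3). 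This works uniformly for monoids and semimodules, with no case split for groups.

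If you prefer your free-algebra setting, the seed $\{a_0+\cdots+a_k,a_1,\ldots,a_k\}$ with target $a_0$ also works. The homomorphism $a_i\mapsto e_i$ is injective on this seed and satisfies $h(\mathcal D_0(T))\subseteq\mathcal D_0(h(T))$, so it transfers the coordinate obstruction back to the free algebra.
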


\begin{proof}
Write $\langle S\rangle$ for the submonoid or subsemimodule generated
by $S$, respectively. Collecting the parameters in a polynomial gives
\begin{equation}\label{EqDetLinearOperators}
\mathcal I_0(S)=S+\langle S\rangle,
\qquad
\mathcal D_0(S)=
\{x\mid x+u\in S\text{ for some }u\in\langle S\rangle\}.
\end{equation}
The monoid case was used in Theorem~\ref{ThmMonoids}; in a semimodule
the same argument uses polynomials of the form
$c+r_1z_1+\cdots+r_nz_n$.

Assume (1). If $x+u=s\in S$, then $x+s=s$, so $x\leqslant s$.
Conversely, if $x\leqslant s\in S$, use $u=s$ in
\eqref{EqDetLinearOperators}. Thus deduction is downward closure,
which proves (2) and (3) with $k=1$. Condition (2) also implies (3)
with $k=1$: apply binary-union preservation to finite sets and then
use finitarity of the polynomial description.

Suppose (3) holds but $a+a\ne a$ in some $M\in V$. In particular,
$a\ne0$. Put $m=k+1$ and work in $M^m$, with coordinates
$0,\ldots,m-1$. Let $e_i$ have entry $a$ in coordinate $i$ and zero
elsewhere, and put
\[
v=(a,\ldots,a),\qquad S=\{v,e_1,\ldots,e_{m-1}\}.
\]
Since $e_0+e_1+\cdots+e_{m-1}=v$, we have
$e_0\in\mathcal D_0(S)$. We show that
$e_0\notin\mathcal D_0(T)$ for every proper subset $T$ of $S$.
Suppose instead that $e_0+u=s\in T$, with
$u\in\langle T\rangle$.
If $v\notin T$, coordinate $0$ gives $a=0$, a contradiction.
Otherwise write
\[
u=\lambda v+\sum_{i=1}^{m-1}\mu_i e_i,
\]
where the coefficients are non-negative integers in the monoid case
and elements of $R$ in the semimodule case. If $e_j\notin T$, its
coefficient can be taken to be zero. Coordinates $0$ and $j$ give
\[
a+\lambda a=s_0,\qquad \lambda a=s_j.
\]
If $s=v$, these equations give $a+a=a$. If $s=e_i\in T$, then
$i\ne j$ and $s_0=s_j=0$, giving $a=0$. Both are contradictions.
Thus all $m=k+1$ elements of $S$ are necessary, contrary to (3).
\end{proof}

\begin{corollary}\label{CorDetIdempotentSemiring}
For a unital semiring $R$, deduction preserves binary unions in every
left $R$-semimodule if and only if addition in $R$ is idempotent.
Equivalently, one-step deduction in the variety of left
$R$-semimodules has a uniform finite bound on the number of seed
elements required.
\end{corollary}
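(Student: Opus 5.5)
The corollary follows from Theorem~\ref{ThmDetFinitePremises} applied to the variety $V$ of all left $R$-semimodules, which is allowed as the whole variety is a subvariety of itself. That theorem says deduction preserves binary unions in every member of $V$ exactly when $V$ satisfies $x+x=x$. It also says this holds exactly when there is a uniform finite bound $k$ on the number of seed elements needed, as in~\eqref{EqDetFinitePremises}. What remains is to show that $V\models x+x=x$ if and only if addition in $R$ is idempotent.

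\emph{Forward direction.} I would use that $R$ itself is a left $R$-semimodule under left multiplication. It is unital, since $R$ is, and the distributive laws are the semimodule axioms. If $V$ satisfies $x+x=x$, then this identity holds in ${}_RR$ in particular, so $r+r=r$ for every $r\in R$.

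\emph{Backward direction.} Assume $1+1=1$ in $R$. For any semimodule element $x$, unitality and distributivity give
\[
x+x=1x+1x=(1+1)x=1x=x.
\]
So every left $R$-semimodule has idempotent addition. The same computation shows that $1+1=1$ is equivalent to full idempotence of addition in $R$, since $r+r=(1+1)r$; either form of the hypothesis therefore suffices.

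Both reformulations in the corollary, binary-union preservation and the uniform finite bound on seed elements, then follow directly from conditions (2) and (3) of Theorem~\ref{ThmDetFinitePremises}. No real obstacle is expected. The only point needing care is that the semimodule signature here is the usual one (addition, zero, and scalar actions $r\cdot$), so the polynomial description~\eqref{EqDetLinearOperators} applies exactly as the theorem assumes.
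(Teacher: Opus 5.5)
Your proposal is correct and matches the paper's own proof. The paper uses the same three ingredients: the computation $x+x=(1+1)x=x$ for one direction, the regular semimodule ${}_RR$ for the converse, and Theorem~\ref{ThmDetFinitePremises} applied to the whole variety of left $R$-semimodules to give both reformulations.
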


\begin{proof}
If $1+1=1$ in $R$, then $x+x=(1+1)x=x$ in every left
$R$-semimodule. Conversely, the regular left semimodule ${}_RR$
detects failure of idempotent addition in $R$. Apply
Theorem~\ref{ThmDetFinitePremises}.
\end{proof}

The order appearing here is the usual natural order on idempotent
semimodules, including those used in max-plus mathematics; see
\cite[Section~2.2]{DetCGQ04}. The theorem concerns the number of
\emph{distinct seed elements}, including the anchor, and not the
number of variables in a polynomial or the number of successive
deduction steps.

\begin{remark}\label{RemDetInductiveUnions}
For the same classes of varieties, preservation of binary unions by
induction has a different meaning: it holds if and only if the
variety is trivial. Indeed, if $a\ne0$ in $M$, then in $M^2$ the set
$\{(a,0),(0,a)\}$ induces $(a,a)$ by
\eqref{EqDetLinearOperators}. Induction of either singleton alone
cannot do so, since its other coordinate remains zero.
\end{remark}

\subsection{Summary of the detection results}

Table~\ref{TabDetection} collects the principal tests. In variety
rows, conditions are required throughout the variety; universal
operator equalities are required at every basepoint. Unless stated
otherwise, operator equalities hold on all non-empty subsets.
The restrictions on signatures in the preceding subsections apply.

\begingroup
\small
\setlength{\tabcolsep}{4pt}
\setlength{\LTcapwidth}{\textwidth}
\renewcommand{\arraystretch}{1.12}
\makeatletter
\patchcmd{\LT@makecaption}{#1{#2: }}{#1{\textsc{#2.}\enspace}}{}{}
\patchcmd{\LT@makecaption}{#1{#2: }}{#1{\textsc{#2.}\enspace}}{}{}
\makeatother
\begin{longtable}{@{}>{\raggedright\arraybackslash}p{0.49\textwidth}>{\raggedright\arraybackslash}p{0.36\textwidth}l@{}}
\caption{Structural properties detected by induction and deduction.}
\label{TabDetection}\\
\toprule
Setting and test & Property detected & Result\\
\midrule
\endfirsthead
\multicolumn{3}{l}{\textit{Table~\ref{TabDetection}, continued}}\\
\toprule
Setting and test & Property detected & Result\\
\midrule
\endhead
\bottomrule
\endfoot
Variety with $0$: $\mathcal I_0=\mathcal D_0$
 & Subtractivity & \ref{ThmDetSubtractivity}\\
Variety with $0$: $\mathcal I_0^\infty=\mathcal D_0^\infty$
 & Finite $n$-subtractivity; $0$ is isolated in compatible orders
 & \ref{ThmDetEventualSubtractivity}\\
Arbitrary variety: $\mathcal I_a=\mathcal D_a$ universally
 & Mal'tsev property & \ref{ThmDetUniversalAgreement}\\
Arbitrary variety: $\mathcal I_a^\infty=\mathcal D_a^\infty$ universally
 & Finite congruence permutability; no nontrivial compatible orders
 & \ref{ThmDetUniversalAgreement}\\
Bounded lattice: every singleton is $0$-inductive, or its deduction is normal
 & Distributivity & \ref{DetThmLatticeSingletons}\\
Lattice variety: $r_{\mathcal I}^{\mathrm{univ}}\leqslant1$
 & Distributivity & \ref{DetThmLatticeUniversalRank}\\
Unital semiring: $0\in\mathcal I_0(\{1\})$
 & Additive inverses & \ref{ThmDetectRings}\\
Monoid: the diagonal in its square is inductive at $(1,1)$
 & Commutativity & \ref{ThmDetectMonoidDiagonal}\\
Monoid: the diagonal in its square is deductive at $(1,1)$
 & Commutativity and cancellation & \ref{ThmDetectMonoidDiagonal}\\
Commutative monoid: nonzero elements are $0$-inductive
 & Conicality; with cancellation, an ordered-group positive cone
 & \ref{ThmDetectPositiveCone}\\
Monoid: nonunits are $1$-inductive
 & Direct finiteness & \ref{ThmDetectDirectFinite}\\
Monoid variety: either universal rank is at most one
 & Groups of uniformly bounded exponent & \ref{ThmDetectMonoidRank}\\
Band variety: either universal rank is at most one
 & Rectangularity & \ref{ThmDetectBandRank}\\
Commutative monoid or semimodule variety: deduction has a uniform finite premise bound
 & Idempotent addition; the bound is one & \ref{ThmDetFinitePremises}\\
\end{longtable}
\endgroup

\end{document}